\documentclass{article}

\usepackage[]{geometry}
\usepackage{amsthm, amsmath, amsfonts, mathrsfs, amssymb, bbm, mathtools}
\usepackage{xcolor}
\usepackage[colorlinks=true, linkcolor=blue, filecolor=magenta, urlcolor=cyan, citecolor=brown]{hyperref}
\usepackage{graphicx, subcaption}
\usepackage[shortlabels]{enumitem}
\usepackage{esvect}
\usepackage{resmes}
\usepackage{cleveref}
\usepackage{bbm}
\usepackage{authblk}
\makeatletter
\newcommand*\rel@kern[1]{\kern#1\dimexpr\macc@kerna}
\newcommand*\widebar[1]{%
	\begingroup
	\def\mathaccent##1##2{%
		\rel@kern{0.8}%
		\overline{\rel@kern{-0.8}\macc@nucleus\rel@kern{0.2}}%
		\rel@kern{-0.2}%
	}%
	\macc@depth\@ne
	\let\math@bgroup\@empty \let\math@egroup\macc@set@skewchar
	\mathsurround\z@ \frozen@everymath{\mathgroup\macc@group\relax}%
	\macc@set@skewchar\relax
	\let\mathaccentV\macc@nested@a
	\macc@nested@a\relax111{#1}%
	\endgroup
}
\makeatother

\newtheorem{thm}{Theorem}[section]
\newtheorem*{thm*}{Theorem}
\newtheorem{prop}[thm]{Proposition}

\newtheorem{coro}[thm]{Corollary}

\newtheorem{lem}[thm]{Lemma}
\theoremstyle{definition}
\newtheorem{defn}[thm]{Definition}
\newtheorem{example}[thm]{Example}
\newtheorem*{assumption*}{Assumption}
\theoremstyle{remark}
\newtheorem{rmk}[thm]{Remark}

\makeatletter
\def\cref@thmoptarg[#1]#2#3#4{%
    \ifhmode\unskip\unskip\par\fi%
    \normalfont%
    \trivlist%
    \let\thmheadnl\relax%
    \let\thm@swap\@gobble%
    \thm@notefont{\fontseries\mddefault\upshape}%
    \thm@headpunct{.}% add period after heading
    \thm@headsep 5\p@ plus\p@ minus\p@\relax%
    \thm@space@setup%
    #2% style overrides
    \@topsep \thm@preskip               % used by thm head
    \@topsepadd \thm@postskip           % used by \@endparenv
    \def\@tempa{#3}\ifx\@empty\@tempa%
      \def\@tempa{\@oparg{\@begintheorem{#4}{}}[]}%
    \else%
      \refstepcounter[#1]{#3}%  <<< cleveref modification
      \@namedef{cref@#3@alias}{#1}% added
      \def\@tempa{\@oparg{\@begintheorem{#4}{\csname the#3\endcsname}}[]}%
    \fi%
    \@tempa}%
\makeatother

\DeclareMathAlphabet\matheuvm{U}{zeur}{m}{n}

\newcommand{\diff}{\operatorname{d}}

\newcommand{\de}{\vec{\matheuvm{e}}}

\newcommand{\ide}{\mathring{\matheuvm{e}}}

\newcommand{\was}{\mathcal{W}_2}
\newcommand{\R}{\mathbb{R}}

\newcommand{\geoE}{\mathcal{G}eo_{\scriptscriptstyle{ \gamma(0) \in \de}} (G)}
\newcommand{\geo}{\mathcal{G}eo}
\newcommand{\ima}{\mathcal{I}m}

\newcommand{\res}{\resmes}

\renewcommand{\P}{\mathcal{P}}
\newcommand{\Leb}{{\mathscr L}}
\renewcommand{\H}{\mathcal{H}}
\newcommand{\oEdge}{\vec{E}}

\title{Wasserstein barycenters on metric graphs}
\author[1]{J\'er\^ome Bertrand\footnote{\texttt{jerome.bertrand\symbol{64}math.univ-toulouse.fr}}}
\author[1]{ Jianyu Ma \footnote{\texttt{jianyu.ma@math.univ-toulouse.fr}}}
\affil[1]{Univ Toulouse, INSA Toulouse, CNRS, IMT, Toulouse, France.}

\date{}

\begin{document}
\pagenumbering{Alph}
\begin{titlepage}
	\maketitle
	
\begin{abstract}
In this paper we solve the barycenter problem on the quadratic Wasserstein space over a possibly infinite metric graph relative to a probability measure $\mathbb{P}$. Assuming that $\mathbb{P}$ gives mass to absolutely continuous measures with respect to the 1d-Hausdorff measure on the metric graph, we prove that such a barycenter is absolutely continuous outside the set of vertices where an atomic part can exist. This result is optimal. We also show that Wasserstein barycenters are not unique in general. 
\end{abstract}

	\tableofcontents
\end{titlepage}

\pagenumbering{arabic}

\section{Introduction}

This paper is devoted to the study of Wasserstein barycenters over a metric graph $(G,d)$. By metric graph, we mean a finite or countable set of vertices connected by edges viewed as actual paths homeomorphic to compact intervals of the real line. We always assume this graph to be connected and to have finite degree at each point. We equip this graph with a length distance and its associated 1d-Hausdorff measure which turns $G$ into a proper, complete, and separable geodesic space. We refer to \Cref{sec-preli} for precise definitions.

Given a probability measure $\mathbb{P}$ on a metric space $(X,d)$, a barycenter of $\mathbb{P}$ is a minimum of the functional $z \longmapsto \int_X d^2(z,x) \diff \mathbb{P}(x)$. This condition coincides with the standard definition when $(X,d)$ is the $n$-dimensional Euclidean space.

In this paper, we consider the case where the metric space is the space of probability measures with finite second order moment $(\was(G), d_W)$ with $(G,d)$ a metric graph.

The (quadratic Kantorovitch-)Wasserstein distance on $\was(G)$ is defined by

$$d_W(\mu,\nu):= \left(\min_{\pi \in \Gamma(\mu,\nu)} \int_{X\times X} d^2(x,y) \diff \pi(x,y)\right)^{1/2},$$
where $\Gamma(\mu,\nu)$ is the set of probability measures on $G\times G$ whose marginals are $\mu$ and $\nu$ respectively. We refer to  \Cref{sec-Wass} for more details.

Existence of a barycenter on a non-proper metric space may fail, see \cite{ma-phd} for a rather complete collection of counter-examples, but barycenters over $(\was(X), d_W)$ do exist whenever the underlying metric space $(X,d)$ is proper, complete, and separable \cite{le2017existence}. Such barycenters are called \emph{Wasserstein barycenters}.

The problem of studying Wasserstein barycenters (aka the Wasserstein barycenter problem) and its regularised version have drawn a lot of interest for its many applications in statistics, image processing, and data science \cite{pereira2025}. In statistics, Wasserstein barycenter can be used as a mean on a non-linear space, and, among other things, results analogous to the law of large numbers do exist, see for instance \cite{le2017existence, bigot2018characterization}; estimating the convergence rate of empirical barycenters towards the actual barycenter is also a question of interest, the obtained rates depend on geometric properties of the space like in \cite{ahidar2020}. Wasserstein barycenters are also of primary importance in the field of metric geometry as a tool to characterise synthetic curvature bounds. Indeed to a probability measure $\mathbb{P}= (1-\lambda) \delta_{\mu_0} + \lambda \delta_{\mu_1}$ corresponds a collection of barycenters for $\lambda $ varying in $[0,1]$ which induces a geodesic curve $(\mu_{\lambda})_{\lambda \in [0,1]}$ in the Wasserstein space, and $k$-convexity of Entropy-like functionals on the Wasserstein space characterises lower Ricci curvature bound, see for instance \cite{villani2009optimal, ambrosio2014calculus, sturm2005convex, kuwada2015} for precise statements and (much) more on this subject. More precisely, barycenters and curvature bounds are related through the so-called Jensen inequality that we now report. Given a (geodesically) convex functional $F$ on a (geodesic) metric space $(X,d)$, and a measure $\mathbb{P}$ on $(X,d)$ admitting a barycenter $z \in X$, Jensen's inequality holds if
$$ F(z) \leq \int_X F(x) \diff \mathbb{P}(x).$$

Inequalities of this kind have been proved in various geometric setups: when $(X,d)$ is an Aleksandrov space of curvature bounded below \cite{paris2020} or when $(X,d)$ is a quadratic Wasserstein space over Euclidean space, a (compact) Riemannian manifold or a metric measure space with synthetic Ricci curvature bounded below (aka RCD spaces) \cite{agueh2011barycenters,kim2017wasserstein, ma2023absolute, han2024geometry}.

Uniqueness of the barycenter is false in general. A classical example is that of $\mathbb{P}=1/2(\delta_x + \delta_{-x})$ where $x$ and $-x$ are two antipodal points on a unit sphere equipped with its canonical metric $(X,d)=(\mathbb{S}^2, can)$, in that situation any point of the equator relative to $x$ and $-x$ is a barycenter of $\mathbb{P}$. Uniqueness of the barycenter holds if the space $(X,d)$ is a metric space of nonpositive curvature thanks to strong convexity properties satisfied by $(x,y) \longmapsto d(x,y)$ \cite{sturm2003probability}.  For Wasserstein barycenters, while nonnegative curvature bound is preserved from the base space $(X,d)$ to its Wasserstein space $(\was(X),d_W)$, this is \emph{not} the case for nonpositive curvature bound despite some nonpositive curvature like properties that do hold on $(\was(X), d_W)$ when $(X,d)$ is a nonpositively curved metric space \cite{villani2009optimal, bertrand2012geometric}. However, another fruitful approach is available: if the probability measure $\mathbb{P}$ gives mass to measures $\mu$ in $\was(X)$ such that for any $\nu \in \was(X)$, the distance $d_W(\mu,\nu)$ is induced by a transport map:
\begin{equation}\label{eq-trans-was}
 d_W(\mu,\nu)^2 =\int d^2(x,T(x)) \diff \mu(x),
\end{equation}
where $T:X \longrightarrow X$ satisfies $T_{\sharp}\mu= \nu$, then the Wasserstein barycenter of $\mathbb{P}$ is \emph{unique}, see for instance \cite{Santambrogio2015, ma-phd} for a detailed proof. The property appearing in \eqref{eq-trans-was} is known to holds whenever the measure $\mu$ is absolutely continuous with respect to a reference measure for metric measure spaces satisfying some regularity properties. Originally this was proved by Brenier in the Euclidean space equipped with the Lebesgue measure, then Brenier's result was extended to very general settings, see for instance \cite{mccann2001polar, bertrand2008existence, gigli2016optimal}. Besides, Wasserstein space $(\was(X),d_W)$ is known to admit a tangent space $T_{\mu} \was(X)$, which can be described in analytical or geometrical terms, at any absolutely continuous measure $\mu$ when $(X,d)$ is sufficiently regular; this was used at the formal level by Otto \cite{otto2001}, see \cite{ambrosio2008gradient} for a proof in the Euclidean case. Consequently, it is natural to seek for conditions under which a Wasserstein barycenter is absolutely continuous with respect to the reference measure, this barycenter is then said to be \emph{regular}.

The first breakthrough regarding the regularity of Wasserstein barycenters is a paper by Agueh and Carlier who proved the result in Euclidean space \cite{agueh2011barycenters}. This result was then generalised to compact manifolds \cite{kim2017wasserstein}, compact Aleksandrov spaces (of curvature bounded below) \cite{jiang2017absolute}, and possibly non-compact Riemannian manifolds with Ricci curvature bounded below \cite{ma2023absolute}. There is also a recent preprint \cite{han2024geometry} where the authors prove the absolute continuity of Wasserstein barycenter provided that $\mathbb{P}$-a.e. surely $Ent_{m}(\mu) < \infty$\footnote{this condition is probably not necessary for the barycenter being a.c.} and the base space is a metric measure space $(X,d ,m)$ with synthetic Ricci curvature bounded below ($(X,d,{m})$ is a RCD space). It is important to keep in mind that none of these results applies to metric graphs because it is a branching space and/or the curvature at each vertex is $-\infty$ (in other terms, a metric graph is \emph{not} an Aleksandrov space of curvature bounded below not even a metric measure space where the Entropy functional is $k$-convex (aka $CD(k,\infty)$ spaces) as proved in \cite{erbar2022gradient}).
Indeed, in the case of a metric (measure) graph, actually even on a metric tree, it is known that the absolute continuity of the barycenter with respect to the 1d-Hausdorff measure cannot hold in general. We provide an example from \cite{ma-phd} inspired by \cite{hotz2013sticky}. Consider the tripod in \Cref{fig:tripod_example}, formed by three copies of the unit interval $[0,1]$ joined at a common origin $o$.
\begin{figure}[h]
	\centering
	\includegraphics[page=2, scale=0.6]{tripod_mid}
	\caption{\label{fig:tripod_example}$\mathbb{P} = \sum_{i=1}^3 \frac{1}{3}\,\delta_{\nu_i}$ on the tripod}
\end{figure}
Let $\mathbb{P} := \frac{1}{3}\sum_{i=1}^3 \delta_{\nu_i}$ be a probability measure on the Wasserstein space over the tripod, where each $\nu_i$ is an absolutely continuous measure supported on the outer half $[\frac{1}{2}, 1]$ of a distinct branch. The unique Wasserstein barycenter of $\mathbb{P}$ is the Dirac measure $\mu_\mathbb{P} = \delta_0$ at the central vertex.

 More in general, the absence of synthetic lower curvature bounds on metric (measure) graphs forces one to tackle each standard question in  optimal mass transport with a specific approach. The Monge problem on metric graphs (where $d_W(\mu,\nu):= \min_{\pi \in \Gamma(\mu,\nu)} \int_{X\times X} d(x,y) \diff \pi(x,y)$ is used instead of the distance we mentioned above) has been studied in \cite{mazon2015optimal}. In \cite{erbar2022gradient}, the authors prove that the Wasserstein distance on a finite metric graph can be described in a more fluid dynamics framework known as Benamou-Brenier formula. We refer to \cite{benamoub2000, ambrosio2021lectures} for more on the Benamou-Brenier formula when the base space is the Euclidean space.

Our main result concerns the quasi-regularity of Wasserstein barycenters over a metric graph. This result is optimal in view of the example described in Figure \ref{fig:tripod_example}.

\begin{thm}\label{thm-main}
	Let $(G,d, \mathcal{H})$ be a metric measure graph. 
	Consider a measure $\mathbb{P} \in \mathcal{W}_2(\mathcal{W}_2(G))$ that
	gives mass to the set $\was^{ac}(G)$ of absolutely continuous measures on $(G,d)$ with respect to $\mathcal{H}$, namely $\mathbb{P}(\was^{ac}(G))>0$.
	If $\mu_{\mathbb{P}}$ is a barycenter of $\mathbb{P}$ then the restriction of $\mu_\mathbb{P}$
	to the interior of any given edge is absolutely continuous.
	Therefore, if $\mu_\mathbb{P}$ is not absolutely continuous
	then its singular part is a sum of Dirac measures located at vertices of $(G,d)$.
\end{thm}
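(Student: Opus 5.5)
We follow the Agueh--Carlier / Kim--Pass scheme, localised to the interior of a single edge, where the geometry is one-dimensional and Euclidean. Fix a barycenter $\mu_{\mathbb{P}}$ and an edge $e$ with interior $\mathring e$, isometric to an open interval $I\subset\R$. Suppose, for contradiction, that $\mu_{\mathbb{P}}\res \mathring e$ has a nontrivial singular part. Then there is a compact set $K\subset \mathring e$ with $\mathcal{H}(K)=0$ and $\mu_{\mathbb{P}}(K)>0$. We will build a competitor $\tilde\mu$ with
\[
\int d_W^2(\tilde\mu,\nu)\diff\mathbb{P}(\nu) < \int d_W^2(\mu_{\mathbb{P}},\nu)\diff\mathbb{P}(\nu),
\]
contradicting minimality.

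\textbf{Step 1: structure of optimal plans for a.c.\ measures.} First we establish a Brenier-type statement on metric graphs. If $\nu\in\was^{ac}(G)$, then every optimal plan between $\mu_{\mathbb{P}}$ and $\nu$, restricted to pairs $(x,y)$ with $x\in\mathring e$ (where the source is $\mu_{\mathbb{P}}$), is induced by a map $T_\nu$ defined $\mu_{\mathbb{P}}$-a.e.\ on $\mathring e$. We also need the monotonicity that comes with it. Reversing the roles, $\nu$ a.c.\ implies the optimal plan from $\nu$ to $\mu_{\mathbb{P}}$ is given by a map $S_\nu$. Because geodesics from a point of $\mathring e$ leave through one of the two endpoints of $e$ or stay in $e$, cyclical monotonicity forces the set of points mapped into $\mathring e$ to behave, on each geodesic branch, like a monotone 1d transport. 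This is the part specific to graphs. Concretely, we decompose $\nu$ according to the direction through which its mass reaches $\mathring e$, and use $c$-cyclical monotonicity together with the fact that $d(\cdot,y)$ is affine along $e$ near any point not equal to $y$.

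\textbf{Step 2: the main estimate (variational inequality).} For each $\nu$ in a positive-$\mathbb{P}$-measure set $A\subset\was^{ac}(G)$, consider the mass of $\mu_{\mathbb{P}}$ on $K$ and the points $y$ it is coupled with. By Step 1, the $\nu$-mass coupled with the atomless/singular piece $K$ is itself singular in a suitable sense: $K$ has zero length, while the preimage in $\nu$ of $K$ has positive $\nu$-measure, so it has positive length. We therefore replace $\mu_{\mathbb{P}}\res K$ by a slightly spread-out version. Let $\tilde\mu=\mu_{\mathbb{P}}+\varepsilon(\sigma-\mu_{\mathbb{P}}\res K')$, where $K'\subset K$ and $\sigma$ is obtained by moving each point of $K'$ a small distance toward its images along the edge. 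For $\nu\in A$ this is a strict first-order gain. For general $\nu$ it is a first-order change that is at worst controlled. The cleanest route is the Agueh--Carlier argument: define a Kantorovich potential $\varphi_\nu$ with $\varphi_\nu(x)+\psi_\nu(y)\le d^2(x,y)$, use the dual characterisation to show that $\Phi=\int\varphi_\nu\diff\mathbb{P}(\nu)$ satisfies $\Phi\le 0$ on $\mathring e$ with equality $\mu_{\mathbb{P}}$-a.e. Then show that $\Phi$ is semiconcave on $\mathring e$ (each $\varphi_\nu$ restricted to $\mathring e$ is $2$-semiconcave, since $x\mapsto d^2(x,y)-x^2$ is concave on $I$ for every $y\in G$, because $d(x,y)$ is a min of two affine functions of $x$, or a $|x-y|$, along $e$). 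In addition, for $\nu\in A$, $\varphi_\nu$ is differentiable at $\mu_{\mathbb{P}}$-a.e.\ point, and $\Phi$ is differentiable on the contact set.

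\textbf{Step 3: conclusion.} Then $\mu_{\mathbb{P}}\res\mathring e$-a.e.\ $x$ is a maximum point of the semiconcave function $\Phi$ on $I$, so $\Phi'(x)=0$ there. The $2$-semiconcavity of each $\varphi_\nu$ plus the strict contribution of $\mathbb{P}(A)>0$ gives that, on the contact set, the averaged map $x\mapsto x-\tfrac12\Phi'(x)$ (morally the identity) is determined by a monotone map $T$ whose derivative is bounded below by a positive constant times $\mathbb{P}(A)$. Hence $\mu_{\mathbb{P}}\res\mathring e$ is the pushforward under a Lipschitz inverse of a combination of a.c.\ pieces, and it is a.c. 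Equivalently, $\mu_{\mathbb{P}}(K)>0$ with $\mathcal{H}(K)=0$ contradicts the Lipschitz image bound $\mathcal{H}(T(K))\ge c\,\mathcal{H}(K)$ reversed. The final sentence of the theorem then follows because $G\setminus\bigcup_e\mathring e$ is the countable vertex set.

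\textbf{Main obstacle.} The hardest step is Step 2. One must show that the contact set of $\Phi$ in $\mathring e$ carries $\mu_{\mathbb{P}}\res \mathring e$, and that potentials associated with a.c.\ $\nu$ give a uniform one-sided (convexity-type) bound despite branching. Mass of $\nu$ lying beyond a vertex contributes a $\varphi_\nu$ that is only semiconcave and non-smooth, through the $\min$ over the two exits. I would handle it by splitting each $\nu$ into the parts reaching $\mathring e$ through either endpoint or from inside $e$, on each of which $d^2(\cdot,y)$ is a genuine quadratic in the edge coordinate.
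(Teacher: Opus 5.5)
\textbf{There is a genuine gap: your Step 1 is false on metric graphs, and Steps 2--3 are built on it.} An optimal plan out of $\mathring e$ need not be induced by a map. This holds even between absolutely continuous measures and even at a barycenter, because mass leaving $e$ through an endpoint can split among the edges beyond that vertex.

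Here is an example. Take the tripod with legs of length $1$. Let $\nu_1$ be uniform on leg $1$, let $\nu_2=\frac12(\text{uniform on leg }2)+\frac12(\text{uniform on leg }3)$, and let $\mathbb{P}=\frac12\delta_{\nu_1}+\frac12\delta_{\nu_2}$. Build the midpoint $\mu$ from the anti-monotone optimal plan. It has density $1$ on the inner half of leg $1$ and density $\frac12$ on the inner halves of legs $2$ and $3$, and it is a barycenter of $\mathbb{P}$. Now project onto $\mathbb{R}$ by the signed distance to the centre. This projection is $1$-Lipschitz, and it preserves both Wasserstein distances (each equals $\frac12$). Hence every optimal plan projects onto the unique optimal plan on $\mathbb{R}$, which is a shift by $\frac12$. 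It follows that every optimal plan from $\mu$ to $\nu_2$ sends each point of leg $1$ half to leg $2$ and half to leg $3$. Likewise, every optimal plan from $\nu_1$ to $\mu$ splits the inner half of leg $1$. So neither $T_\nu$ nor $S_\nu$ exists.

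What differentiability of a potential at $x\in\mathring e$ actually determines is only $d(x,y)$ and the endpoint through which the geodesic leaves $e$. In other words, it determines only the signed coordinate $h(y)$: the edge coordinate, $l(e)+d(e_1,\cdot)$, or $-d(e_0,\cdot)$, according to the branch. So the only transport map available lands in $\mathbb{R}$. You then also need that these $h$'s pull Lebesgue-null sets back to $\mathcal{H}$-null sets, so that $\nu\ll\mathcal{H}$ makes the projected measure absolutely continuous. This is exactly the paper's splitting of the optimal dynamical plan according to how geodesics leave $e$, with the maps $h^{\vec e},h^{\pm}$ and item 1 of its branched-covering theorem. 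Your last paragraph gestures at this splitting, but Steps 1--3 are written as if a map into $G$ existed.

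\textbf{Steps 2--3 contain further errors.}

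First, the sign of the first-order condition is wrong. Take $\varphi_\nu=\psi_\nu^c$, which is semiconcave on $\mathring e$, as you correctly check. The condition coming from the dual barycenter problem places $\mu_{\mathbb{P}}$ on the \emph{minimum} set of $\int\varphi_\nu\,d\mathbb{P}$, not the maximum set. This is what makes the sandwich argument work: a semiconcave function lying above a semiconvex one is differentiable, with Lipschitz derivative, on the contact set. At a maximum, a semiconcave function need not be differentiable (think of $-|x|$). So your conclusion $\Phi'(x)=0$ does not follow. Moreover, differentiability of $\varphi_\nu$ at $\mu_{\mathbb{P}}$-a.e.\ point is exactly what must be proved; absolute continuity of the \emph{target} $\nu$ says nothing about it.

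Second, the Lipschitz estimate goes the wrong way. What you need is an \emph{upper} Lipschitz bound on the contact set for the transport coordinate $x\mapsto x-\tfrac12\varphi_\nu'(x)$, which equals $h(y)$ for any $y$ coupled to $x$. You would get it first for its average over $A$, then for $\mathbb{P}$-a.e.\ $\nu\in A$ by monotonicity. Then the null set $K$ is sent to a null set that nevertheless receives mass at least $\mu_{\mathbb{P}}(K)>0$ from the absolutely continuous projection of $\nu$. A lower bound on the derivative only yields $\mathcal{H}(T(K))\ge c\,\mathcal{H}(K)$, which is useless.

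Third, the dual first-order condition itself is asserted rather than proved. You would need measurably selected, $\mathbb{P}$-integrable potentials for an arbitrary $\mathbb{P}\in\mathcal{W}_2(\mathcal{W}_2(G))$ on a possibly non-compact graph; Kim--Pass work on compact manifolds.

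\textbf{How the paper avoids this.} The paper uses no potentials. The restriction property reduces to the case where $\mu_{\mathbb{P}}$ is concentrated in the interior of a minimising edge, after subdividing. Then the branched covering $\Phi\colon\mathcal{W}_2(G)\to\mathcal{W}_2(\mathbb{R})$ does the work: it does not decrease distances from points of $\mathcal{W}_2(e)$, it is isometric at $\mu_{\mathbb{P}}$, and it preserves absolute continuity in both directions. These properties make $\Phi(\mu_{\mathbb{P}})$ the unique barycenter of $\Phi_\#\mathbb{P}$ on $\mathbb{R}$, where the one-dimensional regularity theorem applies.
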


\begin{rmk}
 A similar result for metric trees was obtained earlier by the second author in his PhD \cite{ma-phd}.
\end{rmk}

One can ask whether the barycenter of $\mathbb{P}$ is, to some extent, unique. This is not the case in general.

\begin{prop}\label{prop-non-unique} In general, given a metric graph $(G,d)$ and $\mathbb{P} \in \mathcal{W}_2(\mathcal{W}_2(G))$,  neither the atomic part nor the absolutely continuous part of a Wasserstein barycenter $\mu_{\mathbb{P}}$ is \emph{unique}.
\end{prop}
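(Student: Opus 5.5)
Since the statement is existential ("in general ... not unique"), I will prove it with explicit examples: one where two barycenters differ in their atomic parts, and one where they differ in their absolutely continuous parts. The guiding mechanism is the classical antipodal-points example on the sphere. We take a metric graph containing a cycle, so that it has "cut locus" behaviour, and let $\mathbb{P}$ be a symmetric combination of Dirac masses $\delta_{\nu_i}$. The natural symmetry of the graph then exchanges minimizers. If the barycenter were unique, it would have to be invariant under the symmetry, and we will show that no invariant measure achieves the minimum.

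\textbf{Atomic part.} Take $G$ to be a circle of length $4$, seen as a graph with four vertices $v_0,\dots,v_3$ at mutual distances $1$ along the cycle. Let $\mathbb{P}=\frac12(\delta_{\delta_{v_0}}+\delta_{\delta_{v_2}})$. For a measure $\mu$ we have $d_W^2(\mu,\delta_x)=\int d^2(z,x)\,\diff\mu(z)$, so the cost is
\[
\int \tfrac12\bigl(d^2(z,v_0)+d^2(z,v_2)\bigr)\,\diff\mu(z).
\]
This is minimized exactly by measures concentrated on the set of minimizers of $f(z)=\frac12(d^2(z,v_0)+d^2(z,v_2))$. Writing $t=d(z,v_0)\in[0,2]$, we have $f=\frac12(t^2+(2-t)^2)$, which is minimized at $t=1$, that is, at $z\in\{v_1,v_3\}$. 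Hence every measure $s\delta_{v_1}+(1-s)\delta_{v_3}$ with $s\in[0,1]$ is a barycenter, and the atomic part is not unique. This $\mathbb{P}$ gives no mass to $\was^{ac}(G)$; that is fine, since the proposition makes no such assumption.

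\textbf{Absolutely continuous part, in the setting of Theorem \ref{thm-main}.} To be convincing I would also want an example satisfying $\mathbb{P}(\was^{ac}(G))>0$. Keep the same cycle and replace $\delta_{v_0}$ and $\delta_{v_2}$ by the uniform measures $\nu_0,\nu_2$ on small arcs of length $2\varepsilon$ centred at $v_0$ and $v_2$. By the rotation and reflection symmetries, both the uniform measure on a small arc centred at $v_1$ and its mirror image at $v_3$ are candidates. The key step is to show that these two are barycenters, and hence that their convex combinations are too.

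This convexity is the main obstacle. The functional $\mu\mapsto\frac12\sum_i d_W^2(\mu,\nu_i)$ is not convex along linear interpolation in general. It is, however, convex under mixtures, since $d_W^2(\cdot,\nu)$ is linear-convex: $d_W^2((1-s)\mu+s\mu',\nu)\le(1-s)d_W^2(\mu,\nu)+s\,d_W^2(\mu',\nu)$, by gluing couplings. So the set of barycenters is convex, and it suffices to exhibit two barycenters with different a.c. parts.

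To find them, I would lower-bound the cost. Any coupling of $\mu$ with $\nu_0$ and $\nu_2$ can be disintegrated into triples $(x,z,y)$, and
\[
\tfrac12\bigl(d^2(z,x)+d^2(z,y)\bigr)\ge\tfrac14 d^2(x,y)\ge\tfrac14\bigl(2-|x-v_0|-|y-v_2|\bigr)^2 .
\]
Equality forces $z$ to be a midpoint of $x$ and $y$. One then checks that equality is attained by the pushforward under the midpoint map along the upper half of the circle, which lives near $v_1$. It is equally attained by the same construction along the lower half, which lives near $v_3$. This gives two distinct a.c. barycenters, and mixtures of them show non-uniqueness of the a.c. part.

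The delicate point is verifying that the optimal coupling between $\nu_0$ and $\nu_2$ realizes $\min\int d^2$ through these midpoints. Since $x$ and $y$ are nearly antipodal, $d(x,y)$ has two competing geodesics, and I would handle this by choosing the pairing so that $d(x,y)=2-\text{(offsets)}$ is consistent with the chosen half-circle.
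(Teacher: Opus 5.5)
Your atomic example is correct: on the $4$-cycle $d(z,v_0)+d(z,v_2)=2$ for every $z$, so the barycenters are exactly $s\delta_{v_1}+(1-s)\delta_{v_3}$, and this proves the literal claim. It does lie outside the hypothesis $\mathbb{P}(\was^{ac}(G))>0$ of Theorem~\ref{thm-main}; the paper's glued-tripod example gives non-uniqueness of the atomic part even when $\mathbb{P}$ charges only absolutely continuous measures.

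The absolutely continuous part has a genuine gap. Your one-sided candidates are not barycenters, and the barycenter of your $\mathbb{P}$ is in fact unique. Put $v_k=k$ in arc length and write $x=a$, $y=2+b$ with $a,b\in[-\varepsilon,\varepsilon]$. Then $d(x,y)=2-|b-a|$. For any coupling, Jensen together with $\mathbb{E}|b-a|\le\mathbb{E}|a|+\mathbb{E}|b|=\varepsilon$ gives $\int d^2\ge(2-\varepsilon)^2$. Equality holds iff $b=a+\varepsilon$ for $a<0$ and $b=a-\varepsilon$ for $a>0$. So the optimal coupling is unique: it sends the half of $\nu_0$ on the $v_3$-side of $v_0$ through $v_3$ and the other half through $v_1$, and each pair has a unique geodesic. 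The barycenters of $\frac12(\delta_{\nu_0}+\delta_{\nu_2})$ are exactly the $d_W$-midpoints, and any midpoint is carried by midpoints of optimally coupled pairs. Hence the only barycenter is the symmetric measure putting mass $\frac12$, uniformly spread, on each of the arcs of length $\varepsilon$ centred at $v_1$ and at $v_3$. Its cost is $(1-\varepsilon/2)^2$, whereas any measure carried by a small arc around $v_1$ has cost at least $\frac14\mathbb{E}(2+b-a)^2\ge 1$. Your step ``equality is attained along the upper half'' is where it breaks. Points of $\nu_0$ on the $v_3$-side cannot reach $\nu_2$ through $v_1$ along a path of length $2-|a|-|b|$, and the only coupling routed entirely through $v_1$ along geodesics is $b=a$, whose cost is $4$.

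This is structural, not a matter of tuning $\varepsilon$ or the arcs. A cycle with its length metric is a round circle. There (McCann) optimal plans out of an absolutely continuous measure are induced by maps, which makes $\mu\mapsto d_W^2(\mu,\nu)$ strictly convex under mixtures for such $\nu$. So on a cycle, any $\mathbb{P}$ charging $\was^{ac}$ has a \emph{unique} barycenter; this is the mechanism recalled in the paper's introduction. Non-uniqueness of the absolutely continuous part needs a branching vertex. The paper takes the tripod with $\nu_1$ uniform on the leg $e_1$ and $\nu_2=\frac12\delta_{v_2}+\frac12\delta_{v_3}$. Every point of $e_1$ is equidistant from $v_2$ and $v_3$, so the transported mass can be split freely between $e_2$ and $e_3$. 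The paper then uses item 3 of Theorem~\ref{thm-prop-branched-cover} with $\bar\mu=\nu_1$, plus a direct computation of $d_W(\mu_{\mathbb{P}},\nu_2)$ on the segment $e_2\cup e_3$. This shows every $\frac12\eta_2+\frac12\eta_3$ with $\Phi$-image $2\Leb^1|_{[1,3/2]}$ is a $d_W$-midpoint of $\nu_1,\nu_2$, hence a barycenter, which yields infinitely many absolutely continuous barycenters.
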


\medskip

\noindent{ \bf Organisation of the paper:} In \Cref{sec-preli}, we first define the metric graphs to be considered in this paper. We also introduce the 1d-Hausdorff measure on such a graph as the canonical reference measure. We briefly recall and/or prove the main properties of metric graph. In the second part, we recall classical properties of Wasserstein spaces, putting the emphasis on the measurable selection properties available on this space. Finally, we describe extra properties in the case of the Wasserstein space over the real line. In this very specific situation, the Wasserstein space $\was(\R)$ is \emph{flat}, namely isometric to a convex subset of a Hilbert space. Through this isometry, we prove the folklore result that the barycenter of a measure  is the standard average of the measures with respect to the given probability measure $\mathbb{P}$. We refer to \Cref{lem:Wasserstein_barycenter_real_line} for a statement.

In \Cref{sec-fine}, taking advantage of the tools introduced in \Cref{sec-preli}, we first prove the restriction property for barycenters. Generalising the fact that the restriction of an optimal plan remains optimal, \Cref{prop-restri-barycenters} is given in a broad set-up since we believe it can be useful in other contexts. This statement will allow us to reduce the proof of our main theorem to the ``mesoscopic" scale where a Wasserstein barycenter is assumed to be supported in a edge. In the last part, we also define our branched covering map and then prove its main properties that will allow us to somehow transfer the Wasserstein barycenter problem over $(G,d)$ to one such problem over the real line. It is to be emphasized that the presence of cyles in $(G,d)$ does not prevent the branched covering map to have these powerful features. 

Finally, in \Cref{sec-main} we prove our main results. The main theorem proof is in two steps as mentioned above. First, we prove the result in the case when the Wasserstein barycenter is supported in an edge, then building on the restriction property, we prove our main result holds without the restriction on the support. In the last paragraph, we discuss the non-uniqueness of the Wasserstein barycenter on a metric graph.

\section{Preliminaries}\label{sec-preli}

\subsection{Metric (measure) graphs}

In this subsection, we present a constructive definition of a metric graph
in terms of length functions defined on its edges.
This construction induces a canonical measure on the metric graph,
which coincides with the Lebesgue measure when restricted to each edge.
A metric graph equipped with this canonical measure is referred to as a \emph{metric measure graph},
a basic concept that underlies much of the subsequent development in this chapter.

\begin{defn}[Graph]\label{def-graph}

A \emph{graph} is a pair $(V,\mathcal{E})$ where $V$ denotes the set of \emph{vertices} and \\$\mathcal{E} \,\subset \{\{x,y\}; x,y \in V, x\neq y\}$ the set of (non-oriented) edges. We assume that $V$ is at most countable, hence so is $\mathcal{E}$. According to this definition, there is at most \emph{one} edge between two given distinct vertices and \emph{no} edge between a single vertex (such an edge is often called loop in graph theory). We shall use the notation $e$ or $\{x,y\}$ (with $x,y \in V, x\neq y$) for an edge. When $e=\{x,y\} \in \mathcal{E}$, $x$ and $y$ are the \emph{endpoints} of $e$ while $e$ is an edge at $x$ (or $y$). The \emph{degree} of a vertex is the number of edges at the vertex.

The set of \emph{oriented edges} is denoted by $E \subset (V \times V) \setminus \{(v,v), v\in V\}$, an oriented edge is denoted either by $\de$ or $(x,y)$. A \emph{path} $p$ in a graph is a finite sequence $p=g_0\cdots g_k$ where $k \geq 1$ and for each $i\in\{0,\cdots, k-1\}$, $\{g_i, g_{i+1}\} \in \mathcal{E}$; an edge is a particular path and we call $g_0$ and $g_k$ the \emph{ends} of the path $p$. A cycle is a path whose endpoints coincide. A graph is said to be \emph{connected} if there exists a path between any pair of distinct vertices.

\centerline{ \emph{We shall assume that the degree at each vertex is finite and at least $1$.}}
Such a graph is said to be locally finite.
\end{defn}

We now introduce metric features on a graph through the notion of length structure on the set of edges.

\begin{defn}[Metric graph]\label{defn:metric_graph}
	A \emph{length function} on a graph $(V,\mathcal{E})$ is a function $l: \mathcal{E} \rightarrow [0,\infty)$
	uniformly bounded from below by a strictly positive number,
	i.e.
	
	\begin{equation}\label{eq-length-lb}
	c:=\inf_{\alpha \in \mathcal{E}} l(\alpha) > 0.
	\end{equation}

	We identify an oriented edge $\de \in E$ with the interval $]0,l(e)[=\{x \in \R; 0<x<l(e)\}$\footnote{We use french notation to avoid confusion with an element of $\R^2$.}. When $(\de_0,\de_1)=\de \in E$, the vertices $\de_0$ and $\de_1$ are identified with the points $0$ and $l(e)$ of $[0,l(e)]$.
	
We get a \emph{metric graph} by considering the quotient space:
$$ G := \left(\coprod_{\de \in E} [0,l(e)] \right){/} \sim,$$
where points in $G$ corresponding to the same vertex of $(V, \mathcal{E})$ are identified.

Last, we define $\ide:=]0,l(e)[$ the interior of the edge $e$.
\end{defn}

Our next goal is to introduce  a (length) metric on $G$ as follows: we first define the length of a path $p=g_0\cdots g_k$ as 
$$ l(p)= \sum_{i=0}^{k-1} l(\{g_{i},g_{i+1}\}),$$
so that this length coincides with the length function introduced above for paths which are edges ({\it i.e.} when $k=1$). Note also the bound

$$ l(p) \geq k\, c>0.$$

According to \cite[\S 3.2.2]{burago2001course}, we need to define the \emph{length} of a path in $G$ with arbitrary endpoints. Let $x\neq y \in G$ and assume that $x \not\in V$ then $x \in ]0,l(e)[$ for a unique oriented edge $\de$. Similarly, if $y \not\in V$ then $y \in ]0,l(f)[$ for a unique $f\in E$. For $x,y \in e$ (equivalently $e=f$), we define a path $p$ in $e$ from $x$ to $y$ as the affine path $t \longmapsto (1-t)x +t y$ and its length is $l(p)=|x-y|$. In the remaining cases, a path from $x$ to $y$  is a finite sequence $p= x\de_i g_1\cdots g_{k-1} \vec{\matheuvm{f}}_j y$ where $i,j\in\{0,1\}$, $\de_i g_1\cdots g_{k-1} \vec{\matheuvm{f}}_j$ is a path between two vertices of $(V,\mathcal{E})$, and
$$l(p) = |i \times l(e) -x| + \sum_{i=0}^{k-1} l(\{g_{i},g_{i+1}\})	+ |j \times l(f) -y|,$$
where $g_0=\de_i$ and $g_k=\vec{\matheuvm{f}}_j$. For short, we write 
$$p: \, x \curvearrowright y$$
for a path from $x$ to $y$.

Note that the length function coincides with the previous definition when $x =\de_i$ and $y=\vec{\matheuvm{f}}_j$.

\begin{defn}[length distance on $G$] The distance $d: G \times G \longrightarrow [0,\infty]$ is
defined by

$$ d(x,y) = \left\{ \begin{array}{cl} 0 & \mbox{ if }  x=y \\ 
\infty & \mbox{ if there is no path from } x \mbox{ to } y \\
\inf_{p: \, x \curvearrowright y}  l(p)  & \mbox{ otherwise}
\end{array}\right..$$
\end{defn}

\begin{prop} Let $G$ be a connected, locally finite metric graph. Then the function $d$ turns $G$ into a metric space.
\end{prop}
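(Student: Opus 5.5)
We check the metric axioms one at a time. Since $G$ is connected, any two points can be joined by a path, so $d$ takes finite values. Symmetry follows by reversing paths: if $p: x \curvearrowright y$, then the reversed sequence is a path $y \curvearrowright x$ with the same length, because each term in the formula for $l(p)$ is symmetric. The triangle inequality follows by concatenation. Given $p: x\curvearrowright z$ and $q: z \curvearrowright y$, we glue them at $z$ into a path from $x$ to $y$ whose length is at most $l(p)+l(q)$.

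Some care is needed in the concatenation when $z$ lies in the interior of an edge. Gluing then yields a "path" that runs from a vertex to $z$ and back out through the same edge. The two partial edge terms have length $|a-z|+|z-b|\ge |a-b|$, where $a,b\in\{0,l(e)\}$ are the endpoints used. If $a=b$, this piece of the path is a back-and-forth excursion, and we delete it, which only shortens the path. If $a\neq b$, we replace it by the full edge $e$, of length $l(e)=|a-b|$. The special case where $x$ or $y$ lies on the same edge as $z$ is handled the same way, using the affine path in $e$. In every case we obtain a genuine path $x\curvearrowright y$ of length at most $l(p)+l(q)$. Taking infima gives $d(x,y)\le d(x,z)+d(z,y)$.

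The main point is separation: $d(x,y)>0$ for $x\neq y$. For this we use the lower bound \eqref{eq-length-lb}, $c=\inf l>0$. If $x,y$ lie in the closure of a common edge $e$, then any path other than the affine one passes through an endpoint of $e$. The only exception is a path leaving through one endpoint and re-entering through the other, which must traverse at least one full edge of length at least $c$. So such a path has length at least $\min(|x-y|,\ \text{dist to endpoints},\ c)>0$.

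If $x,y$ do not lie in the closure of a common edge, we set $\delta(x)$ to be the distance from $x$ to the nearest vertex, with $\delta(x)=0$ when $x\in V$. Any path from $x$ to $y$ must first reach a vertex, costing at least $\delta(x)$. If both are vertices, the path is a vertex path with $k\ge1$ edges and has length at least $kc\ge c$. In the remaining cases the path contains either an interior partial edge of positive length or a full edge. Altogether, every path has length at least $\min(c,\delta(x)+\delta(y),\ldots)$, a positive constant independent of the path. Hence the infimum is positive. Local finiteness is not even needed here; $c>0$ suffices.

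I expect the main obstacle to be organizing this case analysis, depending on whether $x,y$ are vertices or interior points and whether they share an edge, without gaps. The cleanest device is the uniform bound
$$l(p)\ \ge\ \min\bigl(c,\ d_e(x,\partial e)+d_f(y,\partial f),\ |x-y|\bigr),$$
where the last entry is used only when $e=f$. It holds because every path either stays in a single edge or contains the two boundary segments, or contains at least one full edge.
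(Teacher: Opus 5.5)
Your proof is correct and follows the same route as the paper, which only sketches the argument: symmetry by reversing paths, the triangle inequality by concatenating paths and using additivity of length, and positivity from the uniform lower bound $c>0$ on edge lengths; your treatment of concatenation at an interior point $z$ supplies details the paper leaves to the literature. The only blemish is cosmetic: written as a minimum, your closing bound $\min\bigl(c,\ d_e(x,\partial e)+d_f(y,\partial f),\ |x-y|\bigr)$ collapses to $0$ when $x,y\in V$, and in the same-edge case the ``distance to endpoints'' entry is superfluous (and can be $0$ when $x$ or $y$ is a vertex) since $\min(|x-y|,c)$ already bounds every path there, so the bound should be stated as the case distinction that your preceding paragraphs actually carry out.
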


We refer to \cite[Chapter 2]{burago2001course} or \cite{ma-phd}  for details. We only mention that the metric graph is originally equipped with the quotient topology, the triangle inequality follows the additivity of the length function applied to the concatenation of two paths while the positivity of $d$ is a consequence of \eqref{eq-length-lb}.

\medskip

\noindent \fbox{%
\begin{minipage}{\textwidth}
In what follows, we always assume the graph to be connected and locally finite, and we simply denote the metric graph by $(G,d)$.
\end{minipage}
}

\subsubsection*{Basic metric properties}% of metric graphs}

We first fix notation for general metric space $(X,d)$.

\begin{defn}[Various properties of metric spaces]\label{def-var-prop-ms}
A metric space $(X,d)$ is said to be
\begin{itemize}
\item  \emph{Polish} if it is complete and separable,
\item  \emph{proper} if any closed ball in $(X,d)$ is compact,\\
the closed ball of radius $R$ centered at $x \in X$ is denoted by $\widebar{B}(x,R)$,

\item \emph{geodesic} if for any pair of distinct points $x\neq y \in X$, there exists a \emph{geodesic} $\sigma$ from $x$ to $y$.\\
A geodesic $\sigma$ in $X$ is a map $\sigma : [0,1] \longrightarrow X$ such that there exists a number $l(\sigma) \geq 0$ called its \emph{length}, and, for any $s \leq t \in [0,1]$,
$$ d(\sigma(s),\sigma (t)) = l(\sigma) (t-s).$$
In particular $d(\sigma(0),\sigma(1)) = l(\sigma).$\footnote{a geodesic according to this definition is called ``minimising geodesic" in Riemannian geometry.}
 \end{itemize}

\end{defn}

We first recall that a metric graph is a particular instance of (proper) Polish space. This feature makes available most of the classical tools from the optimal mass transport theory.

\begin{thm}
	\label{thm:metric_properties_metric_graphs}
	Metric graphs are proper, complete, and geodesic metric spaces.
\end{thm}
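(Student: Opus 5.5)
We must show three things: that $(G,d)$ is geodesic, proper, and complete. The plan is to derive all three from the uniform lower bound $c=\inf_{\alpha\in\mathcal{E}} l(\alpha)>0$ of \eqref{eq-length-lb} together with local finiteness. The key intermediate fact is a \emph{local finiteness of paths} statement: for $x\in G$ and $R>0$, only finitely many vertices and edges meet $\widebar{B}(x,R)$. Indeed, any point $y$ with $d(x,y)\le R$ is reached by a path $p=x\,\de_i g_1\cdots g_{k-1}\vec{\matheuvm{f}}_j\,y$ with $l(p)\le R+1$. Its combinatorial part contains at most $(R+1)/c$ full edges, so the vertices involved lie at combinatorial distance at most $N:=\lfloor (R+1)/c\rfloor+1$ from an endpoint of the edge containing $x$. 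Since every vertex has finite degree, an induction on $N$ shows there are only finitely many such vertices, hence finitely many edges meeting the ball.

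\emph{Properness.} By the previous step, $\widebar{B}(x,R)$ is contained in a finite union $K$ of closed edges $[0,l(e)]$. Each closed edge is compact, since it is the continuous image of a compact interval in the quotient topology; one must also check that the quotient topology coincides with the metric topology. For the latter, note that on a single edge $d(s,t)=|s-t|$ whenever $|s-t|<c$, and near a vertex $v$ the ball of radius $r<c/2$ is the union of initial segments of the finitely many edges at $v$. Hence $K$ is compact, and $\widebar{B}(x,R)$, being closed in $K$, is compact.

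\emph{Completeness.} A Cauchy sequence is bounded, so it lies in some closed ball, which is compact; a Cauchy sequence with a convergent subsequence converges.

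\emph{Geodesic property.} Fix $x\ne y$ with $D=d(x,y)$, which is finite by connectedness. Any path $p:x\curvearrowright y$ with $l(p)\le D+1$ uses only edges among the finitely many meeting $\widebar{B}(x,D+1)$. It can be made simple without increasing its length, by removing cycles, and the inner combinatorial part of a simple path is a simple vertex sequence in a finite graph. Thus there are only finitely many candidate paths, apart from the finitely many choices $i,j\in\{0,1\}$ and the trivial same-edge path. The infimum defining $d(x,y)$ is therefore a minimum, attained by some $p$. Parametrize $p$ proportionally to arclength to obtain $\sigma:[0,1]\to G$. For $s\le t$ the restriction of $p$ between $\sigma(s)$ and $\sigma(t)$ is a path of length $D(t-s)$, so $d(\sigma(s),\sigma(t))\le D(t-s)$. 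The triangle inequality along $x,\sigma(s),\sigma(t),y$, together with $D=d(x,y)$, forces equality. So $\sigma$ is a geodesic.

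The main obstacle is the finiteness lemma and the identification of topologies, specifically near vertices and for points on edges crossed only partially. I would first carefully verify that $d$ restricted to an edge is locally $|s-t|$, which follows from the bound $c$: any path leaving an edge and returning has length at least $c$. Everything else follows from that lemma.
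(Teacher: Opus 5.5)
Your proof is correct. The properness part is essentially the paper's own argument: the lower bound $c$ on edge lengths, together with finite degrees, confines $\widebar{B}(x,R)$ to finitely many closed edges.

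You differ from the paper on completeness and on the geodesic property. The paper declares these ``classical'' and defers to Burago--Burago--Ivanov, i.e.\ to the general theory of length spaces. There, completeness follows from properness, and shortest paths exist by an Arzel\`a--Ascoli argument combined with lower semicontinuity of length (Hopf--Rinow--Cohn-Vossen). You instead derive completeness directly from properness and obtain geodesics combinatorially:
\begin{enumerate}
\item paths of length at most $d(x,y)+1$ live in a finite subgraph;
\item removing cycles leaves finitely many simple candidates, so the infimum defining $d$ is a minimum;
\item the arclength parametrization of a minimizer is a geodesic, by the triangle inequality along $x,\sigma(s),\sigma(t),y$.
\end{enumerate}

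The cited route is shorter on the page. However, it requires checking that $d$ is an intrinsic metric in the sense of that book, and it rests on a compactness extraction. Your route is elementary and self-contained. It also yields extra structure for free: every geodesic runs along a simple edge path, and only finitely many combinatorial types of geodesics join two given points. That information would, for instance, make the extraction step in Lemma~\ref{lem:semi-ext-geod} immediate.

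One simplification: you do not need to identify the quotient topology with the metric topology. By the first item of Lemma~\ref{lem:edge_local_isometry}, each map $[0,l(e)]\to(G,d)$ is $1$-Lipschitz, so closed edges are already compact in the metric topology, which is all properness requires.
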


\begin{proof}
Let us show that each closed ball $\widebar{B}(v,R)$ is compact. Indeed according to \eqref{eq-length-lb}, a path starting at $v$ whose length is at most, say, $R+1$, is contained in the union of at most  $(R+1)/c$ edges. The degree of each vertex being finite, only finitely many vertices belong to that ball. Finally, $\widebar{B}(v,R)$ is contained in (the image of) finitely many segments $[0,l(e_i)]$ for $i \in \{1,\cdots,N\}$, thus it is compact. The rest of the proof is classical, we refer to \cite{burago2001course} and the references therein for more details.
\end{proof}

Our next goal is to define a canonical measure on a metric graph. Before that, we emphasize some features of the distance on $G$ when restricted to an edge.

\begin{lem}\label{lem:edge_local_isometry}

Given $(G,d)$ a metric graph, let $\de \in E$ and $x,y \in [0,l(e)]$. Then,
\begin{itemize}

\item $ d(x,y) \leq |x-y|$,

\item $d(x,y) = |x-y|$ if $x$ and $y$ are close enough,

\item if $d(\de_0,\de_1)=l(e)$ the edge $e$ is said to be \emph{minimising}. When it is so, for any $s,t \in [0,l(e)]$,
$$ d(s,t) =|s-t|.$$
\end{itemize}
\end{lem}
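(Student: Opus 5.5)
The plan is to compare $d$ with the Euclidean distance on $[0,l(e)]$ by looking at what a path between two points of the edge can be. Fix an oriented edge $\de=(\de_0,\de_1)$ and points $x,y\in[0,l(e)]$. We argue on the three items in turn.

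\textbf{First item.} The affine path $t\mapsto(1-t)x+ty$ inside $e$ is an admissible path $p:\,x\curvearrowright y$ of length $|x-y|$. Taking the infimum in the definition of $d$ gives $d(x,y)\le|x-y|$ at once. If $x$ or $y$ is a vertex, the same path is still admissible: it is the concatenation $x\de_i y$, or the path along the edge itself.

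\textbf{Second item.} The key remark is that any path from $x$ to $y$ other than the affine one leaves the interior $\ide$ through an endpoint and re-enters through an endpoint. Its length is therefore at least
$$\min\Bigl(x+\ell+(l(e)-y),\ (l(e)-x)+\ell'+y,\ x+y+\ell'',\ (l(e)-x)+(l(e)-y)+\ell'''\Bigr),$$
where $\ell,\ell',\ell'',\ell'''\ge 0$ are lengths of vertex-to-vertex paths. Whenever such a path is nontrivial it contains at least one full edge, so its length is at least $c$ by \eqref{eq-length-lb}.

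We split into two kinds of detour.
\begin{itemize}
\item A detour that starts and ends at the same endpoint, say $\de_0$, has length at least $x+y+2c\ge 2c$ when it is nontrivial. In fact it may also be trivial, in which case the cost is $x+y\ge|x-y|$. Indeed, a cycle of length $0$ at $\de_0$ is not a path ($k\ge1$), so going out to $\de_0$ and straight back costs $x+y$.
\item A detour from $\de_0$ to $\de_1$ through the rest of the graph costs at least $x+(l(e)-y)+c$ or $y+(l(e)-x)+c$.
\end{itemize}
Hence, if $|x-y|<c$, every competitor has length at least $|x-y|$:
\begin{itemize}
\item same-end detours cost $x+y\ge|x-y|$;
\item crossing detours cost at least $c>|x-y|$.
\end{itemize}
So $d(x,y)=|x-y|$ as soon as $|x-y|<c$, which gives the second item with an explicit scale.

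\textbf{Third item.} Assume $d(\de_0,\de_1)=l(e)$ and let $s\le t$ in $[0,l(e)]$. By the triangle inequality and the first item,
$$l(e)=d(\de_0,\de_1)\le d(\de_0,s)+d(s,t)+d(t,\de_1)\le s+d(s,t)+(l(e)-t).$$
This yields $d(s,t)\ge t-s$, and combined with the first item, $d(s,t)=|s-t|$.

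\textbf{Main obstacle.} The only delicate step is the second item: carefully enumerating how a path in the sense of the paper's definition can connect two points of the same edge. In particular, one must make sure that paths through a single vertex (out and back along $e$) are accounted for, which the bound $x+y\ge|x-y|$ handles. Everything else follows from the definition of $d$ and the triangle inequality.
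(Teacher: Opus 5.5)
Your proof is correct. For the first two items you follow the paper's argument. The affine path gives the upper bound. For $|x-y|<c$, any other path must leave $e$ and traverse a full edge of length at least $c$, while same-end excursions cost at least $x+y\ge|x-y|$. Your enumeration of the four endpoint combinations makes explicit the paper's ``without loss of generality $\sigma$ reenters $\de$ through the other end''. Your aside on the trivial out-and-back excursion is a little garbled: it is not an admissible path in the paper's sense, since a cycle in a loopless graph needs at least two edges. Counting it among the competitors is harmless, though, because its cost $x+y$ is still at least $|x-y|$.

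For the third item you take a different route. The paper goes back to paths: it restricts a competing path that exits and reenters $e$ through different endpoints to a sub-path joining $\de_0$ and $\de_1$, whose length is at least $d(\de_0,\de_1)=l(e)\ge|x-y|$. You instead combine the triangle inequality with the first item, $l(e)=d(\de_0,\de_1)\le s+d(s,t)+(l(e)-t)$. Your argument is shorter and needs no case analysis on how paths enter and leave $e$. It is also more general: any metric on a segment that is dominated by $|\cdot|$ and realises the full length between the two endpoints must coincide with $|\cdot|$. The paper's path-level argument, in exchange, stays within the same framework as its proof of the second item.
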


\begin{proof}
The proof of the first inequality follows from the definition since the path from $x$ to $y$  contained in $e$ is the affine path of length $|x-y|$. For $x,y$ close enough, {\it i.e.} such that $|x-y| <c$ (where $c$ is as in \eqref{eq-length-lb}), any other path $\sigma$ from $x$ to $y$ has to leave $e$ and go through another edge of length at least $c$ before reentering $e$, such a path cannot be of minimal length. Note that without loss of generality, we can assume that $\sigma$ reenters $\de$ through the other end, otherwise it is clearly not a path of minimal length.
 If we further assume $\de$ is minimising then any path $\sigma$ from $x$ to $y$ as 
above can be restricted to a subinterval for which its endpoints are $\de_0$ and $\de_1$. Thus the length of $\sigma$ is greater than $d(\de_0, \de_1) =l(e) > |x-y|$  whenever $\{x,y\} \neq \{0,l(e)\}$.
\end{proof}

The canonical measure $\mathcal{H}$ on $(G,d)$ can be defined using its edges and vertices, it is indeed the one-dimensional Hausdorff measure on $(G,d)$ \cite[\S 1.7]{burago2001course}.

\begin{defn}[Canonical measures on metric graphs]
	\label{defn:canonical_measure}
	Let $G = (V, E, d)$ be a metric graph.
	The canonical measure $\mathcal{H}$ is the $1d$-Hausdorff measure induced by the distance $d$ on $G$. Thanks to  \Cref{lem:edge_local_isometry}, the measure $\mathcal{H}$ 	\emph{gives no mass to the set of vertices} $V$, and for each
	oriented edge $\de \in E$, the  restriction $\mathcal{H} \res_{\de}$ to $\de$  is the Lebesgue measure on $[0, l(\de)]$.
\end{defn}

\begin{proof}
By \Cref{lem:edge_local_isometry}, for any $\de \in E$, any $x \leq y \in [0,l(e)]$,  $\H^1([x,y]) \leq |x-y|$ and equality holds whenever $d(x,y)=|x-y|$. Since the index at each vertex is finite, this implies $\H(\{v\})=0$ for any $v \in V$. Last, since the $1d$-Hausdorff measure with respect to $|\cdot|$ on $[0,l(e)]$ coincides with the Lebesgue measure (up to scaling) \cite{evans2018measure}, the second item of \Cref{lem:edge_local_isometry} together with simple covering arguments taking advantage of the compactness of $[0,l(e)]$, lead to the fact that $\mathcal{H}\res_{\de}$ coincides with the Lebesgue measure on $[0,l(e)]$.
\end{proof}

%%%%%%%%%%%%%%%%%%%%%%%%%%%%%%%%%%%%%%%%%%%%%%%%%%%%%%%%%%%%%%%%%%%%%%%%%%%%%%%%%%%%

In the course of proving our main result, we shall need to single out some special points which are, in some weak sense, in the Cut Locus of the graph. We define the exact meaning of this in the next definition.

\begin{defn}\label{def:cut-loc} Let $(G,d)$ be a metric graph. For $x\neq y \in G, x \not\in V$, there exist $e,f \in E$ such that
$x \in \ide$ and $y \in f$. 

If there are at least two distinct geodesics $\gamma_1, \gamma_2$  between $x$ and $y$ such that $\gamma_1$ and $\gamma_2$ exit
$e$ by different endpoints, the pair $(x,y)$ is said to belong to the \emph{oriented} cut locus $\overrightarrow{Cut}(G)$. In particular this forces $e \neq f$. %Similarly, we define $\overleftarrow{Cut}(G)$ as the set of $(x,y) $ such that $(y,x) \in \overrightarrow{Cut}(G)$.

%If there are at least two distinct geodesics $\gamma_1, \gamma_2$ %(resp $\sigma_1, \sigma_2$) 
% between $x$ and $y$ such that $\gamma_1$ and $\gamma_2$ leaves
%$e$ and $f$ by different endpoints, the pair $(x,y)$ is said to belong to the cut %locus $Cut(G)$.

\end{defn}

%\begin{rmk}
%Note that the fact that $(x,y)$ and $(y,x)$ belong to $\overrightarrow{Cut}$ does not imply in general that $(x,y) \in Cut(G)$. 
%
%\end{rmk}

With that definition in hand, one can easily generalise part of a classical fact in Riemannian geometry to our metric graph set-up.

\begin{lem}\label{lem:semi-ext-geod}
Let $(G,d)$ be a metric graph and $x\neq y$ such that $(x,y) \in (G\setminus V \times G)\setminus \overrightarrow{Cut}(G)$. Then, any geodesic $\gamma$ from $y$ to $x$ can be locally extended to a geodesic beyond $x$.

\end{lem}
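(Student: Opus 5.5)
The idea is to read off the exit endpoint of $\gamma$ from the non-cut hypothesis and then prolong $\gamma$ along the edge $e$ through $x$ by a small amount.

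\emph{Setup.} Since $x \notin V$, there is a unique oriented edge $\de$ with $x \in \ide$, say $x = s \in ]0,l(e)[$. Let $\gamma$ be a geodesic from $y$ to $x$ and write $L = d(y,x)$. Near its endpoint $x$, $\gamma$ lies in $\ide$: for $t$ close to $1$, $\gamma(t)$ is close to $x$, hence in $\ide$. By \Cref{lem:edge_local_isometry}, the final portion of $\gamma$ is the affine segment reaching $s$ from one side, either from below (from $\de_0$) or from above (from $\de_1$).

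\emph{Case split.} There are two cases for how $\gamma$ enters $e$.

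\emph{Case 1: $y \in \ide$ and $\gamma$ stays in $e$.} Then $d(x,y)=|x-y|$ along the segment. I would extend the segment slightly beyond $x$ and check minimality using the local isometry of \Cref{lem:edge_local_isometry} together with the fact that $\gamma$ is minimising.

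\emph{Case 2 (main case): $\gamma$ enters $e$ through an endpoint.} Without loss of generality, $\gamma$ reversed leaves $e$ through $\de_0$, so $d(y,x) = d(y,\de_0) + s$. The claim is that for small $\epsilon>0$,
\[ d(y, s+\epsilon) = d(y,\de_0) + s + \epsilon. \]
If this holds, the concatenation of $\gamma$ with the segment $[s,s+\epsilon]$ is a geodesic. To prove it, I would argue by contradiction. Suppose there are $\epsilon_n \to 0$ with $d(y,s+\epsilon_n) < d(y,\de_0)+s+\epsilon_n$. A geodesic from $y$ to $s+\epsilon_n$ arriving through $\de_0$ has length at least $d(y,\de_0)+s+\epsilon_n$, so it must arrive through $\de_1$:
\[ d(y,s+\epsilon_n) = d(y,\de_1) + l(e) - s - \epsilon_n . \]
Comparing the two inequalities and letting $n\to\infty$ gives $d(y,\de_1) + l(e) - s \le d(y,\de_0)+s$. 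The reverse inequality holds because $d(y,x)$ is the minimum over both exits. Hence there is equality, and therefore a geodesic from $x$ to $y$ exiting $e$ through $\de_1$. Together with $\gamma$, which exits through $\de_0$, this gives two geodesics between $x$ and $y$ leaving $e$ by different endpoints, so $(x,y)\in \overrightarrow{Cut}(G)$, a contradiction.

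\emph{Subcases and the main obstacle.} The subcase where $y\in e$ and the competing path leaves $e$ must be handled in the same way. I would treat it by formalizing that $d(y,z)$ for $z\in \ide$ equals the minimum of the inside-edge distance and the two through-endpoint expressions, all of which are continuous in $z$. Establishing this formula for $d(y,\cdot)$ restricted to $e$, namely as a minimum of finitely many affine functions of $z$, is the key step. Once it holds, non-cut means the minimum at $s$ is attained only by expressions of the same slope sign, and this persists for $z$ near $s$.
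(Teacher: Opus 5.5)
Your proof is correct and follows the paper's argument. Assuming no extension, geodesics from $y$ to $x+\varepsilon$ must enter $e$ through $\de_1$, and a limiting argument then yields a second geodesic from $x$ to $y$ leaving $e$ through $\de_1$, contradicting $(x,y)\notin\overrightarrow{Cut}(G)$. The only difference is in the limiting step: you pass to the limit in the scalar identity $d(y,s+\epsilon_n)=d(y,\de_1)+l(e)-s-\epsilon_n$ and concatenate $[x,\de_1]$ with a geodesic from $\de_1$ to $y$, whereas the paper extracts a limit geodesic using lower semicontinuity of length; your version is slightly more elementary.

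The paper skips the in-edge case, and your Case 1 sentence on its own does not handle it. Local isometry and minimality of $\gamma$ are not enough: for a non-minimising $e$, the segment from $y\in\ide$ to $x$ can tie with a path around a cycle, and then it does not extend. The case does go through with your min-of-affine-functions formula, provided $\overrightarrow{Cut}(G)$ is read as ``two geodesics leaving $x$ in opposite directions''.
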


\begin{proof}
The proof is by contradiction. Assume $\gamma$ cannot be extended beyond $x \in \ide$. Identifying $e$ with the segment $[0,l(\de)]$, so that $x \in ]0,l(e)[$, assuming that $\gamma$ reaches  $x$ by going through $\de_0$; one gets for any $\varepsilon>0$ small, the existence of geodesic from $y$ to $x +\varepsilon$ whose length is smaller than $d(x,y) + \varepsilon$, this geodesic has to reach $x+\varepsilon$ through $\de_1$. The lower semicontinuity of the length functional combined with an easy extraction argument easily provides us with another geodesic between  $x$ and $y$ entering $\de$ through $\de_1$ hence a contradiction.
\end{proof}

\begin{lem}\label{lem:countability-cut}
Let $(G,d)$ be a metric graph  and $v \in V$. Then the set

$$\overrightarrow{Cut}_v(G):= \{y \in G; (y,v) \in \overrightarrow{Cut}(G)\}$$
is discrete, in particular it is at most countable.
\end{lem}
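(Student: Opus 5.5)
To prove that $\overrightarrow{Cut}_v(G)$ is discrete, I will show that every point $z \in G$ has a neighbourhood containing only finitely many points of $\overrightarrow{Cut}_v(G)$. A point $y$ belongs to this set when $y \in \ide$ for some edge $e$ and there are two geodesics from $y$ to $v$ leaving $e$ through different endpoints. Write $\de=(a,b)$ and identify $e$ with $[0,l(e)]$, so that $y=t\in\,]0,l(e)[$. Any geodesic from $t$ to $v$ leaving through $a$ has length $t+d(a,v)$, since after reaching $a$ it must be a geodesic from $a$ to $v$. Similarly, a geodesic leaving through $b$ has length $l(e)-t+d(b,v)$. Hence the condition $y\in\overrightarrow{Cut}_v(G)$ forces
\[ t + d(a,v) = l(e) - t + d(b,v) = d(t,v), \]
so that $t = \tfrac12\bigl(l(e)+d(b,v)-d(a,v)\bigr)$. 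The point is therefore uniquely determined by the edge. Conversely, if this equality holds, one gets geodesics through both ends, because $d(t,v)=\min\bigl(t+d(a,v),\,l(e)-t+d(b,v)\bigr)$ for $t$ in the interior. The first step is to justify this formula: any path from an interior point $t$ to $v\neq t$ must exit $e$ through $a$ or $b$, and $v$ is a vertex, so $v\notin \ide$.

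The conclusion is that each open edge $\ide$ contains at most one point of $\overrightarrow{Cut}_v(G)$. Vertices are excluded by definition, since the first coordinate must lie in $G\setminus V$.

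Discreteness then follows from local finiteness. Take any $z\in G$ and the ball $B(z,1)$. By the argument in the proof of \Cref{thm:metric_properties_metric_graphs}, using \eqref{eq-length-lb} and finite degrees, this ball meets only finitely many edges. It therefore contains only finitely many points of $\overrightarrow{Cut}_v(G)$, and a smaller ball around $z$ contains none of them except possibly $z$ itself. So the set has no accumulation point in $G$ and is discrete. Since $G$ is a countable union of compact edges, each containing finitely many such points, the set is at most countable. Alternatively, countability follows directly from $E$ being countable.

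The only delicate point is the length formula for geodesics exiting through a given endpoint. In particular, one must rule out a geodesic that exits through $a$, later re-enters $e$, and finally leaves through $b$. This is harmless: such a path contains a subpath from $t$ back to $e$ and is longer than necessary, as in the proof of \Cref{lem:edge_local_isometry}. Moreover, a geodesic from $t$ to $v$ is injective, so it crosses $t$'s edge only once at the start. I expect no real obstacle beyond writing this cleanly.
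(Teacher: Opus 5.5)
Your proof is correct and takes essentially the paper's approach: each open edge contains at most one point of $\overrightarrow{Cut}_v(G)$, and local finiteness together with countability of the edges gives discreteness and countability. You also supply details the paper leaves implicit, namely the explicit location $t=\tfrac12\bigl(l(e)+d(b,v)-d(a,v)\bigr)$ of the only possible cut point on an edge and the ball argument showing the set has no accumulation points.
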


\begin{proof}
Let $y\in \overrightarrow{Cut}_v(G)$. 
Thus, there are two geodesics $\gamma_1$ and $\gamma_2$ connecting $v$ and $y$ crossing different endpoints of $e$. As a consequence, the cardinality of $\overrightarrow{Cut}_v(G) \cap \ide$ is at most $1$. Therefore, since the number of edges is at most countable and the degree of each vertex is finite, we infer the claim.
\end{proof}

%%%%%%%%%%%%%%%%%%%%%%%%%%%%%%%%%%%%%%%%%%%%%%%%%%%%%%%%%%%%%%%%%%%%%%%%%%%%%%%%%%%%%%%%%%%%%%%%%%%%%%%%%%%%%%%%%%
%%%%%%%%%%%%%%%%%%%%%%%%%%%%%%%%%%%%%%%%%%%%%%%%%%%%%%%%%%%%%%%%%%%%%%%%%%%%%%%%%%%%%%%%%%%%%%%%%%%%%%%%%%%%%%%%%%

%%%%%%%%%%%%%%%%%%%%%%%%%%%%%%%%%%%%%%%%%%%%%%%%%%%%%%%%%%%%%%%%%%
%%%%%%%%%%%%% Measurable selection %%%%%%%%%%%%%%%%%%%%%%%%%%%%%%
%%%%%%%%%%%%%%%%%%%%%%%%%%%%%%%%%%%%%%%%%%%%%%%%%%%%%%%%%%%%%%%%%%%%

\subsection{Wasserstein spaces}\label{sec-Wass}

In this paper we consider the quadratic Wasserstein space over a metric space $(X,d)$, namely the space of Borel probability measures $\mu$ with finite second order moment:
$$ \int_X d^2(x_0,x) \diff \mu(x),$$
where $x_0$ is an arbitrary fixed point of $X$.

The set of such probability measures is denoted by $\was(X)$. The space $\was(X)$ is equipped with the Wasserstein distance whose definition is recalled below. To define this metric, let us introduce for $\mu_1,\mu_2 \in \was(X)$, 
$$ \Gamma(\mu_1,\mu_2):=\{\pi \in \mathcal{P}(X\times X); \, {p_i}_{\sharp} \pi =\mu_i, i\in\{1,2\}\}$$
the set of transport plans between $\mu_1$ and $\mu_2$. In this definition, the maps $p_i : X \times X \longrightarrow X$ are the canonical projections defined by $p_1(x,y)=x$ and $p_2(x,y)=y$.

The Wasserstein distance between two measures  $\mu,\nu \in \was(X)$ is defined as
$$d_W(\mu,\nu):= \left(\min_{\pi \in \Gamma(\mu,\nu)} \int_{X\times X} d^2(x,y) \diff \pi(x,y)\right)^{1/2}.$$

The subset of \emph{optimal} plans between $\mu$ and $\nu$ is denoted by $\Gamma^o(\mu,\nu)$; it is made of plans $\pi^o\in \Gamma(\mu,\nu)$ for which
$$ d_W(\mu,\nu)^2 = \int_{X\times X} d^2(x,y) \diff \pi^o(x,y).$$

Let us recall three classical results on Wasserstein spaces:

\begin{thm} Let $(X,d)$ be a metric space. Then,

\begin{enumerate}
\item $(\was(X),d_W)$ is Polish whenever $(X,d)$ is so,
\item $(\was(X),d_W)$ is compact whenever $(X,d)$ is so, otherwise $(\was(X),d_W)$ is \emph{not} proper,
\item $(\was(X),d_W)$ is Polish and geodesic\footnote{see \Cref{def-var-prop-ms} for the definition} whenever $(X,d)$ is so.
\end{enumerate}
\end{thm}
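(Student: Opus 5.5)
We treat the three items in turn, using only standard tools: Prokhorov's theorem, lower semicontinuity of the transport cost under weak convergence, and a measurable selection theorem. Throughout we use the characterisation that $d_W(\mu_n,\mu)\to 0$ if and only if $\mu_n\to\mu$ weakly and the second moments converge. The one-sided fact we need most is that $d_W$-convergence implies weak convergence. This follows from $d_W\geq$ the Kantorovich--Rubinstein distance $W_1$ (Cauchy--Schwarz) together with the fact that $W_1$ convergence implies convergence of integrals of bounded Lipschitz functions.

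\emph{Item 1.} For separability, take a countable dense set $D\subset X$. The measures $\sum_{i} q_i\delta_{x_i}$ with finitely many $x_i\in D$ and rational weights $q_i$ are $d_W$-dense. To see this, first truncate $\mu$ to a large ball, sending the outside mass to $x_0$; the cost of this is controlled by the tail of the second moment. Then partition the remaining region into countably many small Borel cells around points of $D$ (Voronoi-type). Finally keep finitely many cells and round the weights, again sending leftover mass to $x_0$.

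For completeness, a $d_W$-Cauchy sequence $(\mu_n)$ has uniformly bounded second moments and is tight. Tightness is the one genuinely delicate point when $X$ is not proper. We handle it by the standard argument: for $\varepsilon>0$, fix $N$ with $d_W(\mu_n,\mu_N)$ small for $n\geq N$, and use a compact set that is good for $\mu_1,\dots,\mu_N$. The mass of $\mu_n$ outside an $\varepsilon$-neighbourhood of that compact set is then controlled by Markov's inequality along an optimal plan. Repeating at scales $\varepsilon 2^{-k}$ produces a totally bounded closed set carrying most of the mass.

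Prokhorov's theorem then gives a weakly convergent subsequence $\mu_{n_k}\to\mu$. Lower semicontinuity of $\pi\mapsto\int d^2\,\diff\pi$, applied to weak limits of optimal plans, gives $d_W(\mu,\mu_n)\leq\liminf_k d_W(\mu_{n_k},\mu_n)$. This bound is small for large $n$ by the Cauchy property, so $\mu_n\to\mu$ in $d_W$.

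\emph{Item 2.} If $X$ is compact, then $\mathcal P(X)=\was(X)$ is weakly compact by Prokhorov's theorem. Since $d^2$ is bounded and continuous, second moments converge automatically, so $d_W$ metrises weak convergence and $\was(X)$ is compact.

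If $X$ is not compact, we exhibit a closed ball in $\was(X)$ that is not compact, distinguishing two cases.
\begin{itemize}
\item If $X$ is bounded, pick $x_n$ with $d(x_n,x_m)\geq\varepsilon$ for $n\neq m$. Then $(\delta_{x_n})$ lies in a bounded ball and satisfies $d_W(\delta_{x_n},\delta_{x_m})\geq\varepsilon$, so it has no convergent subsequence.
\item If $X$ is unbounded, pick $x_n$ with $r_n:=d(x_0,x_n)\to\infty$ and set $\mu_n=(1-r_n^{-2})\delta_{x_0}+r_n^{-2}\delta_{x_n}$. Then $d_W(\mu_n,\delta_{x_0})=1$, while $\mu_n\to\delta_{x_0}$ weakly. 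Any $d_W$-limit of a subsequence would have to equal this weak limit $\delta_{x_0}$, which contradicts $d_W(\mu_n,\delta_{x_0})=1$.
\end{itemize}
In both cases the closed ball of radius $1$ (or large enough) around a Dirac mass is not compact, so $\was(X)$ is not proper.

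\emph{Item 3.} Polishness is item 1. Given $\mu,\nu$, fix an optimal plan $\pi\in\Gamma^o(\mu,\nu)$. For $(x,y)\in X\times X$, the set of constant-speed geodesics from $x$ to $y$ is a nonempty closed subset of the Polish space $C([0,1],X)$. The multifunction $(x,y)\mapsto\{\text{geodesics from }x\text{ to }y\}$ has closed graph, hence it is Borel measurable. The Kuratowski--Ryll-Nardzewski theorem, or its analytic-set variant from the measurable selection results we rely on, then gives a Borel map $(x,y)\mapsto\gamma_{xy}$.

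Set $\mu_t:=(e_t\circ\gamma)_\sharp\pi$, where $e_t$ is evaluation at time $t$. The plan $(e_s\circ\gamma,e_t\circ\gamma)_\sharp\pi$ couples $\mu_s$ and $\mu_t$ at cost $(t-s)^2\int d^2\,\diff\pi$, so $d_W(\mu_s,\mu_t)\leq|t-s|\,d_W(\mu,\nu)$. The triangle inequality along $0\leq s\leq t\leq1$ forces equality, so $(\mu_t)$ is a geodesic in the sense of \Cref{def-var-prop-ms}.

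The main obstacle is the measurability of the geodesic selection, which is why the measurable selection machinery is emphasised. Second to it is the tightness step in item 1 when $X$ is not proper. For metric graphs, which are proper by \Cref{thm:metric_properties_metric_graphs}, both steps simplify: bounded closed sets are compact, so tightness and the compactness of the set of geodesics are immediate.
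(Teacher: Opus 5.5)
The paper gives no proof of this theorem and simply refers to \cite{Santambrogio2015, ambrosio2021lectures, villani2009optimal}. Your sketch is the standard argument from those references: dense rational combinations of Diracs, the tightness-plus-lower-semicontinuity proof of completeness, and displacement interpolation $(e_t\circ\gamma)_\sharp\pi$ along a measurable choice of geodesics. It is correct in the Polish setting in which the statement is meant, up to two points to tidy up.

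In item 2, your bounded non-compact case gets an $\varepsilon$-separated sequence only because a complete, bounded, non-compact space is not totally bounded. For an incomplete $X$ such as $(0,1)$, which the literal hypothesis allows, take instead $\delta_{x_n}$ along a non-convergent Cauchy sequence. If $d_W(\delta_{x_{n_k}},\mu)^2=\int d(x_{n_k},y)^2\diff\mu(y)\to0$, then along a further subsequence $x_{n_k}\to y$ for $\mu$-a.e.\ $y$, a contradiction.

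In item 3, a closed graph only makes the set of pairs joined by a geodesic in a given open set analytic, not Borel. So without properness you get a universally measurable (Jankov--von Neumann) selection rather than a Borel one. That is still enough to define $\mu_t$.
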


We refer to \cite{Santambrogio2015,ambrosio2021lectures,villani2009optimal}
 for proofs and a complete review of the standard topological and metric properties of the quadratic Wasserstein space.
 
\begin{rmk} Combining \Cref{thm:metric_properties_metric_graphs} together with the previous theorem yields that in the case of a metric graph $(G,d)$, $(\was(G),d_W)$ is a geodesic Polish space, non-proper except if $(G,d)$ is compact (then it is actually compact).
\end{rmk}

In the next part, we recall some results on measurable selection that are often useful to compare Wasserstein spaces over distinct base spaces. 

\subsubsection{Measurable selection}\label{subs-measu-selec}

In order to study the Wasserstein barycenter, we shall need to apply several \emph{measurable} selections of family of measures -like transport plans with a variable marginal for instance. 

In the result below, the product space $X^2$ is equipped with the product metric
$$d((x_1, x_2), (y_1, y_2))^2 = d(x_1, y_1)^2 + d(x_2, y_2)^2.$$

\begin{thm}\label{thm-sel-opt-plan} Let $(X,d)$ be a Polish space. There exists a measurable map
\begin{eqnarray}\label{oplan-selec}
\pi : \was(X) \times \was(X)  & \longrightarrow  &   \was(X^2), \nonumber \\
						(\mu,\nu)  & \longmapsto    &  \pi_{\mu}^{\nu}
						\end{eqnarray} 
such that $\pi_{\mu}^{\nu} \in \Gamma^o(\mu,\nu)$ for all $\mu,\nu$.
\end{thm}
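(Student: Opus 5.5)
We will obtain this from a measurable selection theorem for set-valued maps, the Kuratowski--Ryll-Nardzewski theorem. We apply it to the correspondence
$$\Phi:(\mu,\nu)\longmapsto \Gamma^o(\mu,\nu)\subset \was(X^2).$$
The domain $\was(X)\times\was(X)$ is Polish, and so is the target $\was(X^2)$, because $X^2$ with the product metric is Polish. The theorem will then give a Borel selection as soon as we check two things: $\Phi$ takes nonempty closed values, and $\Phi$ is weakly measurable, meaning $\{(\mu,\nu);\,\Phi(\mu,\nu)\cap U\neq\emptyset\}$ is Borel for every open $U$. It suffices to show that $\Phi$ has closed graph and locally relatively compact values. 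This is the standard route, as in Villani's book, Corollary 5.22.

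\textbf{Step 1: nonempty compact values.} Fix $(\mu,\nu)$. The set $\Gamma(\mu,\nu)$ is tight, since its marginals are fixed. It is also closed under narrow convergence. The cost $\pi\mapsto\int d^2\,\diff\pi$ is lower semicontinuous for narrow convergence, so a minimiser exists and $\Gamma^o(\mu,\nu)\neq\emptyset$.

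All plans in $\Gamma(\mu,\nu)$ have second moments dominated by those of $\mu$ and $\nu$. Their second moments are therefore uniformly integrable, and narrow convergence within $\Gamma(\mu,\nu)$ coincides with $d_W$-convergence in $\was(X^2)$. Hence $\Gamma^o(\mu,\nu)$ is compact in $\was(X^2)$.

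\textbf{Step 2: closed graph.} Let $(\mu_n,\nu_n)\to(\mu,\nu)$ in $\was(X)^2$, and let $\pi_n\in\Gamma^o(\mu_n,\nu_n)$ with $\pi_n\to\pi$ in $\was(X^2)$. The marginals pass to the limit, so $\pi\in\Gamma(\mu,\nu)$. Convergence in $\was(X^2)$ gives convergence of second moments, and $d^2(x,y)\le 2d^2(x_0,x)+2d^2(x_0,y)$. Hence
$$\int d^2\,\diff\pi_n\to\int d^2\,\diff\pi.$$
Continuity of $d_W$ gives $d_W(\mu_n,\nu_n)\to d_W(\mu,\nu)$. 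Combining the two limits, $\int d^2\,\diff\pi=d_W(\mu,\nu)^2$, so $\pi$ is optimal.

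The same tightness and uniform-integrability argument shows more. The set $\bigcup_n\Gamma^o(\mu_n,\nu_n)$ is relatively compact in $\was(X^2)$ whenever $(\mu_n,\nu_n)$ converges. So $\Phi$ is upper hemicontinuous with compact values.

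\textbf{Step 3: measurability and selection.} Take a closed set $F\subset\was(X^2)$. The set $\{\Phi\cap F\neq\emptyset\}$ is closed: if $(\mu_n,\nu_n)\to(\mu,\nu)$ with $\pi_n\in\Phi(\mu_n,\nu_n)\cap F$, then by Step 2 a subsequence of $(\pi_n)$ converges to some $\pi\in\Phi(\mu,\nu)\cap F$. Every open set $U$ in the metric space $\was(X^2)$ is a countable union of closed sets. Hence $\{\Phi\cap U\neq\emptyset\}$ is $F_\sigma$, so $\Phi$ is weakly measurable. Kuratowski--Ryll-Nardzewski then yields the measurable map $(\mu,\nu)\mapsto\pi_\mu^\nu$.

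The main obstacle is Step 2: upgrading narrow compactness to compactness in the $\was(X^2)$ topology, via uniform integrability of second moments. The rest is bookkeeping. Alternatively, one could cite the known result that the optimal-plan correspondence is closed and apply the measurable selection theorem for closed-valued correspondences with closed graph into a Polish space.
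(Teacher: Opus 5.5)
Your argument is correct and is essentially the paper's. For this statement the paper only cites Villani's Corollary 5.22, and for the dynamical-plan analogue right after it runs exactly your Kuratowski--Ryll-Nardzewski scheme: compact values via Prokhorov, closedness of $\{(\mu,\nu);\,F(\mu,\nu)\cap K\neq\emptyset\}$ for closed $K$ via relative compactness of $\bigcup_n F(\mu_n,\nu_n)$ along convergent sequences, then open sets written as countable unions of closed sets.

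One caution about your closing aside: closed graph and closed values alone do not guarantee a Borel selection when the target is not locally compact, so the compactness of the fibres you proved in Steps 1--2 is genuinely needed there. With compact fibres the Arsenin--Kunugui uniformisation theorem does apply, which is in effect how Villani's cited proof proceeds: the set of optimal plans is closed and the marginal map has compact fibres.
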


We refer to \cite[Corollary 5.22]{villani2009optimal} for a proof of this result. Regarding the base space, we also have the following selection result.

\begin{prop} Let $(X,d)$ be a proper geodesic Polish metric space and let $\mathcal{G}eo(X)$ be the space of geodesics in $(X,d)$. There exists a measurable selection map:
\begin{eqnarray}\label{geod-selec}
\gamma_{\cdot,\cdot} :  X \times X  & \longrightarrow  &   \mathcal{G}eo(X), \nonumber \\
						(x,y)  & \longmapsto    &  \gamma_{x,y}
						\end{eqnarray} 
where $\gamma_{x,y} (0) =x$ and $\gamma_{x,y} (1) =y  $.

\end{prop}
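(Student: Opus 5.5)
We need a Borel measurable map $(x,y)\mapsto\gamma_{x,y}$ with values in the space $\mathcal{G}eo(X)$ of geodesics, and $\gamma_{x,y}(0)=x$, $\gamma_{x,y}(1)=y$. The plan is to deduce this from the Kuratowski--Ryll-Nardzewski selection theorem.

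First we set up the spaces. We view $\mathcal{G}eo(X)$ as a subset of $C([0,1],X)$ with the uniform distance, which is Polish since $X$ is. The set $\mathcal{G}eo(X)$ is closed there, because the identities $d(\sigma(s),\sigma(t))=l(\sigma)(t-s)$ with $l(\sigma)=d(\sigma(0),\sigma(1))$ pass to uniform limits. Hence $\mathcal{G}eo(X)$ is itself Polish. The endpoint map $e:\sigma\mapsto(\sigma(0),\sigma(1))$ is continuous on it. Since $X$ is geodesic, $e$ is surjective, and for $(x,y)$ the fibre $F(x,y):=e^{-1}(x,y)$ is a nonempty closed set. The degenerate case $x=y$ causes no trouble: the fibre is the constant curve.

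Next we show the fibres are compact, which is where properness enters. Every $\sigma\in F(x,y)$ is $d(x,y)$-Lipschitz and takes values in the closed ball $\widebar{B}(x,d(x,y))$, which is compact. By Arzel\`a--Ascoli, $F(x,y)$ is therefore compact. The same argument, applied on neighbourhoods, gives local compactness of $e$. For $(x_n,y_n)\to(x,y)$, geodesics $\sigma_n\in F(x_n,y_n)$ are uniformly Lipschitz and lie in a fixed compact ball. Hence they admit a uniformly convergent subsequence, whose limit is a geodesic in $F(x,y)$.

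The key step, and the one I expect to need the most care, is proving that the compact-valued multifunction $F$ is upper semicontinuous. Concretely, we must show that for every closed $C\subset\mathcal{G}eo(X)$ the set $\{(x,y);F(x,y)\cap C\neq\emptyset\}=e(C)$ is closed. Take $(x_n,y_n)=e(\sigma_n)\to(x,y)$ with $\sigma_n\in C$. The compactness extraction from the previous step yields a limit $\sigma\in C\cap F(x,y)$, so $e(C)$ is closed.

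From upper semicontinuity we get weak measurability. For an open set $U=\bigcup_k C_k$, written as a countable union of closed sets (possible in a metric space), the set $\{F\cap U\neq\emptyset\}=\bigcup_k e(C_k)$ is an $F_\sigma$, hence Borel. The Kuratowski--Ryll-Nardzewski theorem then provides a Borel selection $(x,y)\mapsto\gamma_{x,y}\in F(x,y)$. This is exactly the claimed map, with $\gamma_{x,y}(0)=x$ and $\gamma_{x,y}(1)=y$.

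Alternatively, one can cite the general fact that a continuous surjection between Polish spaces with $\sigma$-compact fibres admits a Borel section, as in \cite[Corollary 5.22]{villani2009optimal}-type arguments. The main obstacle is only checking the closedness of $e(C)$, which rests entirely on properness of $X$.
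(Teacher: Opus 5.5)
Your argument is correct and follows essentially the route the paper has in mind. The paper only remarks that the statement is a straightforward consequence of the Arzel\`a--Ascoli theorem (citing Lisini for general Polish spaces), and your combination of Arzel\`a--Ascoli compactness with the Kuratowski--Ryll-Nardzewski theorem, via closedness of $e(C)$ for closed $C$, is the same template the paper carries out in detail for the measurable selection of optimal dynamical plans right afterwards.
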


The proof of this result is a rather straightforward consequence of the Arzela-Ascoli theorem. We refer to \cite{lisini2007} for a proof of it in the general setting of Polish spaces.

\begin{defn}[Evaluation map]
For $t \in [0,1]$, we define
\begin{eqnarray}
           \cdot_t :  \mathcal{G}eo(X)  &  \longrightarrow  & \;\;\;X \nonumber \\
           					\gamma  & \longmapsto & \gamma_t:= \gamma(t) 
\end{eqnarray}

\end{defn}

By combining the two previous results, we get the existence of optimal \emph{dynamical} plans. A measurable selection of such plans does exist.

\begin{thm} Let $(X,d)$ be a proper geodesic Polish space. There exists a measurable map
\begin{eqnarray}\label{odynplan-selec}
\Pi : \was(X) \times \was(X)  & \longrightarrow  &   \P(\mathcal{G}eo(X)), \nonumber \\
				\phantom{v}		(\mu,\nu)  & \longmapsto    &  \Pi_{\mu}^{\nu}
\end{eqnarray} 
such that 

$(\cdot_0, \cdot_1) _{\sharp} \Pi_{\mu}^{\nu} \in \Gamma^o(\mu,\nu)$ for all $\mu,\nu$. Besides, the following equality holds
\begin{equation}\label{eq-dyn-transp-pro}
d_W(\mu,\nu)^2 = \int_{\mathcal{G}eo(X)} \, s^2(\gamma) \, \diff \Pi_{\mu}^{\nu}(\gamma),
\end{equation} 
where $s(\gamma)=d(\gamma(0),\gamma(1))$ is the \emph{speed} (or length) of $\gamma$.
\end{thm}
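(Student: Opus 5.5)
We build the measurable map $\Pi$ by composing the two selections already available: the optimal plan selection $(\mu,\nu)\mapsto \pi_\mu^\nu$ of \Cref{thm-sel-opt-plan} and the geodesic selection $(x,y)\mapsto \gamma_{x,y}$ of \eqref{geod-selec}. Concretely, we set
$$\Pi_\mu^\nu := (\gamma_{\cdot,\cdot})_\sharp \pi_\mu^\nu \in \P(\mathcal{G}eo(X)),$$
the push-forward of the optimal plan under the geodesic selection. Since $\gamma_{x,y}(0)=x$ and $\gamma_{x,y}(1)=y$, we have $(\cdot_0,\cdot_1)\circ\gamma_{\cdot,\cdot}=\mathrm{id}_{X\times X}$. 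Hence $(\cdot_0,\cdot_1)_\sharp\Pi_\mu^\nu=\pi_\mu^\nu\in\Gamma^o(\mu,\nu)$.

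Formula \eqref{eq-dyn-transp-pro} then follows by the change of variables formula for push-forwards:
$$\int_{\mathcal{G}eo(X)} s^2(\gamma)\,\diff\Pi_\mu^\nu(\gamma)=\int_{X\times X} s^2(\gamma_{x,y})\,\diff\pi_\mu^\nu(x,y)=\int_{X\times X} d^2(x,y)\,\diff\pi_\mu^\nu(x,y).$$
The last integral equals $d_W(\mu,\nu)^2$ by optimality of $\pi_\mu^\nu$. Here $s$ is continuous on $\mathcal{G}eo(X)$, with the uniform topology, as the composition of continuous evaluations with $d$, so it is measurable.

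The remaining work, and the main obstacle, is the measurability of $(\mu,\nu)\mapsto\Pi_\mu^\nu$. We equip $\mathcal{G}eo(X)$ with the uniform distance; it is a closed subset of $C([0,1],X)$, hence Polish. We give $\P(\mathcal{G}eo(X))$ the weak topology and its Borel $\sigma$-algebra. This $\sigma$-algebra is generated by the maps $P\mapsto \int f\,\diff P$ for bounded continuous $f$, or equivalently by $P\mapsto P(B)$ for Borel sets $B$.

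It therefore suffices to show that for each bounded Borel $f$ on $\mathcal{G}eo(X)$ the map
$$(\mu,\nu)\mapsto \int f\circ\gamma_{\cdot,\cdot}\,\diff\pi_\mu^\nu$$
is measurable. Since $g:=f\circ\gamma_{\cdot,\cdot}$ is a bounded Borel function on $X\times X$, this reduces to the measurability of $\pi\mapsto\int g\,\diff\pi$ on $\P(X\times X)$ for bounded Borel $g$. We prove this by a monotone class argument: it holds for bounded continuous $g$ by definition of the weak topology, and the class of such $g$ is closed under bounded pointwise limits. Composing with the measurable map $(\mu,\nu)\mapsto\pi_\mu^\nu$ then concludes.

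Finally, we note that the measurability of $\gamma_{\cdot,\cdot}$ as a map into $\mathcal{G}eo(X)$ with its Borel structure is exactly what the cited selection result provides. Properness is used only there, through Arzel\`a--Ascoli.
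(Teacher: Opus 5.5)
Your proof is correct, but it takes a different route from the paper's. The paper applies the Kuratowski--Ryll-Nardzewski theorem to the set-valued map $(\mu,\nu)\mapsto F(\mu,\nu)$ of \emph{all} optimal dynamical plans, regarded in $\was(\mathcal{G}eo(X))$. This forces it to prove two things: that each $F(\mu,\nu)$ is nonempty and compact (Prokhorov plus Arzel\`a--Ascoli on the sets $\mathcal{G}_R$ of geodesics with endpoints in a ball), and that $\{(\mu,\nu);\, F(\mu,\nu)\cap K\neq\emptyset\}$ is closed for closed $K$, via a compactness and stability-of-optimality argument along converging marginals. You instead observe that the pointwise construction (pushing an optimal plan forward by a measurable geodesic selection) is already measurable in $(\mu,\nu)$ once the plan comes from the measurable selection of \Cref{thm-sel-opt-plan}. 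Everything then reduces to the standard fact that $P\mapsto T_\sharp P$ is Borel for a Borel map $T$, which your monotone class argument establishes. Your route is shorter and needs no analysis of sets of dynamical plans. The plans you obtain also have an extra feature: the mass from $x$ to $y$ always travels along the single geodesic $\gamma_{x,y}$, independently of $(\mu,\nu)$. The paper's approach selects within the whole set of optimal dynamical plans and gives measurability directly into $\was(\mathcal{G}eo(X))$ with its $d_W$ topology. You obtain measurability for the weak topology on $\P(\mathcal{G}eo(X))$, which is what the statement requires. One step you leave implicit: \Cref{thm-sel-opt-plan} gives measurability into $\was(X\times X)$ with the $d_W$ topology. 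To feed it into your lemma on $\P(X\times X)$, you should note that the inclusion $\was(X\times X)\hookrightarrow\P(X\times X)$ is continuous, since $d_W$-convergence implies weak convergence.
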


\begin{proof}
The set $\mathcal{G}eo(X)$ equipped with the uniform distance
$$ d_{\infty} (\gamma, \tilde{\gamma}) = \max_{t \in [0,1]} d(\gamma(t), \tilde{\gamma}(t)),$$
is a $\sigma$-compact metric space hence complete and separable. We then denote the quadratic Wasserstein space over $(\mathcal{G}eo(X), d_{\infty})$ by $(\was(\mathcal{G}eo(X)), d_W)$. Observe that for a probability measure $\Pi$ over $\mathcal{G}eo(X)$, $\Pi \in \was(\mathcal{G}eo(X))$ amounts to
$$ \int s^2(\gamma) \diff \Pi(\gamma)<\infty,$$
thanks to the bound, say for a constant geodesic $\gamma_0\equiv x_0$, 
$$d_{\infty}^2(\gamma_0,\gamma) \leq 2(d^2(x_0, \gamma(0)) + s^2(\gamma)).$$

The proof then follows from the Kuratowski and Ryll-Nardzewski measurable selection theorem \cite{bogachev2007measure}. We consider the set-valued mapping:
$$ F : \was(X) \times \was(X)   \rightrightarrows     \was(\mathcal{G}eo(X)), $$
where $F(\mu,\nu) ={\bf \Gamma}_{dyn}^o (\mu,\nu)$ is the set of optimal dynamical plans whose ends are $\mu$ and $\nu$ (in this order). We have to check that $F(\mu,\nu)$ is a non-empty closed subset of  $\was(\mathcal{G}eo(X))$ (we shall actually prove the \emph{compactness} of $F(\mu,\nu)$) and that for any closed subset $K$ of 
$\was(\mathcal{G}eo(X))$, the set
$$ \mathcal{S}_K:= \{(\mu,\nu) \in \was(X)^2; F(\mu,\nu) \cap K \neq\emptyset \}$$
is a measurable subset of $\was(X)^2\;$\footnote{the standard statement of this measurable theorem requires to check this property for $K$ an open subset of $\was(X)^2$, this property follows from the one we state by elementary calculations noting that any open subset of a metric space is a countable union of closed sets}.

We have already checked the existence of optimal dynamical plans between $\mu$ and $\nu$. The compactness of $F(\mu,\nu)$ is a consequence of Prokhorov's theorem. Indeed, given $R>0$  such that $\max (\mu (X\setminus  \overline{B}(x_0,R)), \nu (X\setminus  \overline{B}(x_0,R))) < \varepsilon$ then we claim that the set
$$ \mathcal{G}_R:=\{\gamma \in \mathcal{G}eo(X); \gamma(0), \gamma(1) \in \overline{B}(x_0,R)\}$$
is compact. First, the set $\mathcal{G}_R$ is clearly closed.

To prove the claim, note that for $\gamma \in \mathcal{G}_R$,  $s(\gamma)=d(\gamma(0), \gamma(1)) \leq 2R$ thus  $d(x_0, \gamma(t)) \leq R +1/2 s(\gamma) \leq 2R$ for all $t\in [0,1]$. Consequently,
$\mathcal{G}_R $  is contained in the subset  $S$ of Lipschitz functions $f :[0,1] \longrightarrow X$ whose range satisfies $f([0,1]) \subset \overline{B}(x_0,2R)$ and whose Lipschitz constant is at most $2R$. The space $(S, d_{\infty})$ is compact thanks to the Ascoli-Arzela theorem.

Now, let us prove that $\mathcal{S}_K$ is a closed subset of $\was(X)^2$. Let $$(\mu_n,\nu_n) \underset{ n \rightarrow \infty}{\longrightarrow} (\mu_{\infty}, \nu_{\infty})$$ be a converging sequence where the $(\mu_n,\nu_n)$ belong to $\mathcal{S}_K$. For each $n$, let $\Pi_n \in F(\mu_n,\nu_n) \cap K$. Since $\mu_n \rightarrow \mu_{\infty}$ and $\nu_n \rightarrow \nu_{\infty}$ in $\was(X)$, the collection $\{(\mu_n,\nu_n); n \in \mathbb{N}\} \cup \{(\mu_{\infty},\nu_{\infty})\}$ is a compact set and the proof of the claim above leads to the compactness of $\overline{\cup_{n \in \mathbb{N}} F(\mu_n,\nu_n)}$ (this is classical for optimal plans). Finally, the continuity of the evaluation mapping $\Pi \longmapsto (e_0,e_1)_{\sharp} \Pi$ implies that any accumulation point of $(\Pi_n )_{n \in \mathbb{N}}$ belongs to $F(\mu_{\infty},\nu_{\infty}) \cap K$ and the result is proven.

%any $\Pi \in F(\mu,\nu)$ has at least $1-2\varepsilon$ amount of mass concentrated in the compact subset of geodesics of speed at most $\rm{diam} (C)$ and lying within a closed ball centered at $x_0$ and of radius $ \max_{ y \in C} d(x_0,y) +  \rm{diam} (C)$. Besides a weak limit of optimal dynamical plans has to be optimal since $\gamma \longmapsto s(\gamma)^2 \geq 0$ is continuous. 
 
%Last, given $(\mu_n,\nu_n)_{n \in \mathbb{N}} \in \mathcal{S}^{\mathbb{N}}$ a converging sequence, the limit is still in $\mathcal{S}$ as a consequence of the compactness of $K$ and Prokhorov's theorem. Therefore $\mathcal{S}$ is a closed subset and the result is proved.

\end{proof}

In the case of a metric graph, we shall take advantage of the graph structure in order to further study dynamical plans. This will be done in \Cref{sec-fine}.

%%%%%%%%%%%%%%%%%%%%%%%%%%%%%%%%%%%%%%%%%%%%%%%%%
%%%%%%%%%%%% Barycenter %%%%%%%%%%%%%%%%%%%%%%%
%%%%%%%%%%%%%%%%%%%%%%%%%%%%%%%%%%%%%%%%%%%%%%%%%%
\subsubsection{Barycenter}

We end this introductive part on Wasserstein spaces with our main object of interest: Wasserstein barycenter. We start with the definition of a barycenter on an arbitrary metric space before briefly discussing the basic properties of barycenters over Wasserstein space.

\begin{defn}[Barycenter]
	\label{defn:barycenter}
	Let $(X,d)$ be a metric space and let $\mu$ be a Borel probability measure on $X$ such that
	$\int_{X} d(x_0, y)^2 \diff \mu(y) < \infty$ for some point $x_0 \in E$.
	We call $z_\mu \in X$ a barycenter of $\mu$ if
	\[
		\int_{X} d(z_\mu, y)^2 \diff \mu(y)
		= \min_{x \in X} \int_{X} d(x, y)^2 \diff \mu(y).
	\]
\end{defn}

Barycenters always exist in proper spaces since a minimizing sequence is
bounded and thus pre-compact. We refer to Ohta \cite{ohta2012barycenters} for more details and
some other properties  of barycenters in a proper space.

However, simple counter-examples show the lack of uniqueness for barycenters.

\begin{example}
	\label{example:sphere_barycenters_equator}
	Let $\mathbb{S}^2$ be the two-dimensional sphere
	and fix two antipodal points $x, -x$ of $\mathbb{S}^2$.
	Consider the measure $\mu = \frac{1}{2} \delta_x + \frac{1}{2} \delta_{-x}$ on $\mathbb{S}^2$.
	Then all points in the equator, i.e., the set of all points with equal distances to $x$ and $-x$,
	are barycenters of $\mu$.
\end{example}

Barycenters on non-proper metric spaces may not exist, see \cite{ma-phd} for counter-examples. Recall that $(\was(X), d_W)$ is \emph{not} proper unless $(X,d)$ is compact. The existence of Wasserstein barycenter is therefore not straightforward. Nevertheless, a general existence result is proved in \cite{le2017existence}. 

\begin{thm}If $(X,d)$ is a locally compact, complete, geodesic space then any measure $\mathbb{P} \in \was(\was(X))$ has a barycenter.

\end{thm}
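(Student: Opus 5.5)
The plan is to approximate $\mathbb{P}$ by finitely supported measures, solve the barycenter problem for those, and pass to the limit by a compactness argument in $\was(X)$. Since $(X,d)$ is locally compact, complete and geodesic, the Hopf--Rinow--Cohn-Vossen theorem shows that it is proper. Fix $x_0\in X$. Because $\was(X)$ is Polish, there are finitely supported measures $\mathbb{P}_n=\sum_{i=1}^{N_n}\lambda^n_i\delta_{\mu^n_i}$ with $d_{W}(\mathbb{P}_n,\mathbb{P})\to 0$ in $\was(\was(X))$; empirical measures of an i.i.d. sample, or a quantization, will do. Write
$$F(\nu)=\int d_W^2(\nu,\mu)\diff\mathbb{P}(\mu),\qquad F_n(\nu)=\int d_W^2(\nu,\mu)\diff\mathbb{P}_n(\mu).$$
By the triangle inequality in $L^2$ and the coupling characterisation of $d_W$ on $\was(\was(X))$, we have $|F_n(\nu)^{1/2}-F(\nu)^{1/2}|\le d_W(\mathbb{P}_n,\mathbb{P})$ for every $\nu$.

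\emph{Step 1: the finite case.} Fix $\mathbb{P}_n=\sum_i\lambda_i\delta_{\mu_i}$ (dropping $n$). Set
$$c(x_1,\dots,x_N)=\min_{z\in X}\sum_i\lambda_i d^2(z,x_i).$$
The minimum exists because $X$ is proper, and $c$ is continuous. A multi-marginal optimal plan $\gamma\in\Gamma(\mu_1,\dots,\mu_N)$ minimising $\int c\,\diff\gamma$ exists by Prokhorov's theorem and lower semicontinuity. By Kuratowski--Ryll-Nardzewski, as in \Cref{subs-measu-selec}, I choose a measurable selection $T(x_1,\dots,x_N)$ of a pointwise barycenter and set $\bar\mu=T_\sharp\gamma$. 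Gluing optimal plans $\pi^{\nu}_{\mu_i}$ along their common marginal $\nu$ gives, for any $\nu$, $F_n(\nu)\ge\int c\,\diff\gamma$. On the other hand the plans $(T,p_i)_\sharp\gamma$ give $F_n(\bar\mu)\le\int c\,\diff\gamma$. Hence $\bar\mu$ is a barycenter of $\mathbb{P}_n$.

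\emph{Step 2: compactness of $(\mu_n)$.} Let $\mu_n$ be the barycenter just constructed for $\mathbb{P}_n$. Comparing $z=T(x)$ with $z=x_0$ gives $\sum_i\lambda_i d^2(T(x),x_i)\le\sum_i\lambda_id^2(x_0,x_i)$, and the triangle inequality then yields
$$d^2(x_0,T(x))\le 4\sum_i\lambda_i d^2(x_0,x_i).$$
So for every $R$,
$$\int_{\{d(x_0,\cdot)>R\}}d^2(x_0,y)\diff\mu_n(y)\le 4\int\!\!\int_{\{\sum_i\lambda_id^2(x_0,x_i)>R^2/4\}}\sum_i\lambda_i d^2(x_0,x_i)\,\diff\gamma .$$
The right-hand side must be made small uniformly in $n$. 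I expect this to be the main obstacle. The convergence $\mathbb{P}_n\to\mathbb{P}$ in $\was(\was(X))$ gives uniform integrability of $\mu\mapsto d^2_W(\delta_{x_0},\mu)$ under $\mathbb{P}_n$. It also gives a uniform tail control of the measures $\mu^n_i$ in $\mathbb{P}_n$-average, which I would obtain by comparing with a $\mathbb{P}$-average through optimal couplings of $\mathbb{P}_n$ and $\mathbb{P}$. From these two facts, and by splitting according to whether $\sum_i\lambda_i d^2(x_0,x_i)$ is dominated by a single large term, I would derive the required uniform bound. Properness of $X$ then makes uniformly integrable second moments plus tightness equivalent to relative compactness in $\was(X)$. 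Therefore a subsequence satisfies $\mu_n\to\mu$ in $d_W$.

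\emph{Step 3: passage to the limit.} The function $F$ is continuous on $\was(X)$, being the square of a $1$-Lipschitz function of $\nu$. Since $F_n(\mu_n)\le F_n(\delta_{x_0})$ is bounded, the estimate $|F_n^{1/2}-F^{1/2}|\le d_W(\mathbb{P}_n,\mathbb{P})$ gives, for every $\nu\in\was(X)$,
$$F(\mu)=\lim F(\mu_n)=\lim F_n(\mu_n)\le\lim F_n(\nu)=F(\nu).$$
Hence $\mu$ is a barycenter of $\mathbb{P}$.
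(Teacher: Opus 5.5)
The paper gives no proof of this statement; it quotes it from \cite{le2017existence}. Your route is essentially the argument of that reference: the multi-marginal problem with a measurable barycenter selection for finitely supported $\mathbb{P}$, approximation of $\mathbb{P}$ in $\was(\was(X))$, tightness of the approximating barycenters, and stability via $F(\nu)^{1/2}=d_W(\delta_\nu,\mathbb{P})$. Steps 1 and 3 are correct as written.

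The one step you leave open is the uniform tail bound in Step 2, and it closes in a few lines. Splitting according to whether a \emph{single} term dominates is not the right mechanism: with weights $1/N_n$ and $N_n\to\infty$, typically no term does. Your first fact (uniform integrability of $d_W^2(\delta_{x_0},\cdot)$ under $\mathbb{P}_n$) is also not needed.

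Put $f_i=d^2(x_0,x_i)$ and $S=\sum_i\lambda_i f_i$. On $\{S>M\}$, the indices with $f_i\le M/2$ contribute at most $M/2<S/2$. Hence $S\le 2\sum_i\lambda_i f_i\mathbf{1}_{\{f_i>M/2\}}$ there. Integrating against $\gamma$ only involves its marginals. Taking $M=R^2/4$ and combining with your estimate gives
\[
\int_{\{d(x_0,\cdot)>R\}} d^2(x_0,y)\diff\mu_n(y)\le 4\int_{\{S>R^2/4\}}S\diff\gamma\le 8\int_{\{d^2(x_0,\cdot)>R^2/8\}} d^2(x_0,y)\diff\bar{\mathbb{P}}_n(y),
\]
where $\bar{\mathbb{P}}_n:=\int\mu\diff\mathbb{P}_n(\mu)=\sum_i\lambda^n_i\mu^n_i$.

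It remains to control $\bar{\mathbb{P}}_n$. Let $\Theta$ be an optimal coupling of $\mathbb{P}_n$ and $\mathbb{P}$, and $\pi_\mu^\nu$ the measurable selection of optimal plans. Then $\int\pi_\mu^\nu\diff\Theta(\mu,\nu)$ couples $\bar{\mathbb{P}}_n$ with $\bar{\mathbb{P}}:=\int\mu\diff\mathbb{P}(\mu)\in\was(X)$ at cost $d_W^2(\mathbb{P}_n,\mathbb{P})$. So $\bar{\mathbb{P}}_n\to\bar{\mathbb{P}}$ in $\was(X)$, and $d^2(x_0,\cdot)$ is uniformly integrable along $(\bar{\mathbb{P}}_n)$. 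The right-hand side therefore tends to $0$ as $R\to\infty$, uniformly in $n$. This is your second fact made precise, and it completes Step 2.
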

%%%%%%%%%%%%%%%%%%%%%%%%%%%%%%%%%%%%%%%%%%%%%%%%%%%%%%%%%%%%%%%%%%%%%%%%%%%%%%%%%
%%%%%%%%%%%%%%%%%%% wass (\R) %%%%%%%%%%%%%%%%%%%%%%%%%%%%%%%%%%%%%%%%%%%%%
%%%%%%%%%%%%%%%%%%%%%%%%%%%%%%%%%%%%%%%%%%%%%%%%%%%%%%%%%%%%%%%%%%%%%%%%%%%%%

We shall need finer properties of the Wasserstein space when the base space is $(\R, |\cdot|)$ that are explained in the next paragraph.

\subsection{Wasserstein space over the real line}

Most results in this section are based on the fact that $\was(\R)$ is a flat space, a precise statement is \Cref{thm:optimal_transport_on_real_line}. The main object to express the extra properties of $\was(\R)$ is the quantile function of $\mu \in \was(\R)$ which is discussed in the next part.

\subsubsection{Quantile functions}

In the literature, there exist different definitions of
distribution functions and quantile functions, we thus make our definition explicit.

\begin{defn}[Distribution functions and quantile functions]
	\label{defn:quantile_function}
	Let $\mu$ be a probability measure on $\mathbb{R}$.
	Its distribution function $f_\mu: \mathbb{R} \rightarrow [0, 1]$
	is defined by $f_\mu (x): = \mu((-\infty, x])$,
	and its quantile function $f_\mu^{-1}: [0, 1] \rightarrow \widebar {\mathbb{R}}$
	is defined by
	\begin{align*}
		f_\mu^{-1}(t) & := \inf_x \{x \in \mathbb{R} ; f_\mu(x) > t \}
		\text{ for } 0 < t < 1                                            \\\text{ and }\qquad
		f_\mu^{-1}(0) & := \lim_{t \downarrow 0} f_\mu^{-1}(t),\quad
		f_\mu^{-1}(1) := \lim_{t \uparrow 1} f_\mu^{-1}(t),
	\end{align*}
	where $\widebar {\mathbb{R}}=[-\infty,\infty]$.
\end{defn}
%Note that in the above definition, we make use of the convention
%\begin{equation}
%	\label{equa:infimum_of_empty_set}
%	\inf_{x \in (y, z) } \emptyset = z, \quad
%\end{equation}
%for $y,z \in \widebar{\R}$.

With this definition, the quantile function is fully characterised by its values on $(0,1)$. While the values at the boundary of $[0,1]$ can be infinite, the quantile function is real-valued as a function on $(0,1)$. Moreover, as the distribution function $f_{\mu}$, the quantile function $f_\mu^{-1}$ is a non-decreasing, right continuous function on $(0,1)$. Besides, any $g:(0,1)\longrightarrow \R$ non-decreasing, right continuous function is the quantile function of some probability $\mu \in \mathcal{P}(\R)$. We refer to \cite{bobkovledoux2019, villani2021topics} and the references therein for more on this properties.

In the next lemma, to be used in the proof of our main result, the size of the support of $\mu$ is related to the range of its quantile function.

\begin{lem}
	\label{lem:quantile_function_range_and_measure_support}
	Let $\mu$ be a probability measure on $\mathbb{R}$.
	The infimum and supremum of the support of $\mu$
	are related to its quantile function as follows,
\[
	f_\mu^{-1}(0) = \inf \operatorname{supp}(\mu)
	\quad \text{ and } \quad
	f_\mu^{-1}(1) = \sup \operatorname{supp}(\mu).
\]
	In particular,
	$\mu$ has compact support if and only if $f_\mu^{-1}$ is finite on the whole unit interval $[0, 1]$.
\end{lem}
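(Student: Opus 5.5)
We have to show that $f_\mu^{-1}(0)=\inf\operatorname{supp}(\mu)$ and $f_\mu^{-1}(1)=\sup\operatorname{supp}(\mu)$, where $f_\mu^{-1}$ is the quantile function of \Cref{defn:quantile_function}. The plan is to compare $f_\mu^{-1}$ with $a:=\inf\operatorname{supp}(\mu)$ and $b:=\sup\operatorname{supp}(\mu)$ (both in $\widebar{\R}$) directly from the definitions, using only that $f_\mu^{-1}$ is non-decreasing on $(0,1)$. Monotonicity guarantees that the limits $f_\mu^{-1}(0)=\lim_{t\downarrow 0}f_\mu^{-1}(t)$ and $f_\mu^{-1}(1)=\lim_{t\uparrow 1}f_\mu^{-1}(t)$ exist in $\widebar{\R}$. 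We use the characterisation $x\in\operatorname{supp}(\mu)$ iff $\mu(]x-\varepsilon,x+\varepsilon[)>0$ for all $\varepsilon>0$. In particular $\mu(]-\infty,a[)=0$ and $\mu(]b,\infty[)=0$, since the complement of the support is $\mu$-null (by separability of $\R$).

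\textbf{Left endpoint.} For $t\in(0,1)$ and $x<a$ we have $f_\mu(x)=0\le t$. Hence every $x$ with $f_\mu(x)>t$ satisfies $x\ge a$, so $f_\mu^{-1}(t)\ge a$, and letting $t\downarrow 0$ gives $f_\mu^{-1}(0)\ge a$. Conversely, take any real $x>a$. There is a support point $s\in[a,x[$, and choosing $\varepsilon$ with $s+\varepsilon<x$ gives $f_\mu(x)\ge \mu(]s-\varepsilon,s+\varepsilon[)=:\eta>0$. For $t<\eta$ this yields $f_\mu^{-1}(t)\le x$, so $f_\mu^{-1}(0)\le x$. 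Since $x>a$ is arbitrary, $f_\mu^{-1}(0)\le a$, which also covers the case $a=-\infty$.

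\textbf{Right endpoint.} The argument is symmetric. If $x\ge b$ is real, then $f_\mu(x)=1>t$ for every $t<1$, so $f_\mu^{-1}(t)\le x$; hence $f_\mu^{-1}(1)\le b$. For the reverse inequality, take a real $x<b$ and a support point $s\in\,]x,b]$ with $\varepsilon$ such that $s-\varepsilon>x$. Then $f_\mu(y)\le 1-\eta$ for all $y<s-\varepsilon$, where $\eta=\mu(]s-\varepsilon,s+\varepsilon[)>0$. So for $t>1-\eta$ the set $\{y: f_\mu(y)>t\}$ lies in $[s-\varepsilon,\infty[$, giving $f_\mu^{-1}(t)\ge s-\varepsilon>x$. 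Therefore $f_\mu^{-1}(1)\ge b$.

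\textbf{Compact support.} $\operatorname{supp}(\mu)$ is closed and nonempty, so it is compact iff $a$ and $b$ are finite. Since $f_\mu^{-1}$ is monotone and real-valued on $(0,1)$, finiteness on all of $[0,1]$ is equivalent to finiteness of $f_\mu^{-1}(0)$ and $f_\mu^{-1}(1)$, and the two endpoint identities give the equivalence.

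The only delicate step is the careful handling of strict versus non-strict inequalities and of the possibly infinite values. The step most easily mishandled is the reverse inequality for $f_\mu^{-1}(1)$, which needs the quantitative mass $\eta$ near a support point. No earlier result of the paper is needed beyond the stated monotonicity of $f_\mu^{-1}$.
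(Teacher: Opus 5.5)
Your proof is correct and follows essentially the same route as the paper: the inequalities $f_\mu^{-1}(0)\ge\inf\operatorname{supp}(\mu)$ and $f_\mu^{-1}(1)\le\sup\operatorname{supp}(\mu)$ come from $f_\mu$ vanishing below the support and equalling $1$ above it. Your arguments for the reverse inequalities, which use the positive mass $\eta$ near a support point, are just the contrapositive of the paper's midpoint argument, which shows that points strictly beyond $f_\mu^{-1}(0)$ or $f_\mu^{-1}(1)$ lie outside the support.
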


\begin{proof}
	We first prove the following two inequalities,
\[
	f_\mu^{-1}(0) \le \inf \operatorname{supp}(\mu)
	\quad \text{ and } \quad
	f_\mu^{-1}(1) \ge \sup \operatorname{supp}(\mu).
\]
	The case that $f_{\mu}^{-1}(0) = - \infty$ or $f_{\mu}^{-1}(1) = + \infty$
	is trivial, we are left to consider the case where they are finite.
	If $y < f_\mu^{-1}(0)$, then for any $t \in (0, 1)$,
	$\frac{1}{2} y + \frac{1}{2}f_\mu^{-1}(0) < f_\mu^{-1}(t)$
	and thus $\frac{1}{2} y + \frac{1}{2}f_\mu^{-1}(0) \notin \{ x \in \mathbb{R} \mid f_\mu(x) > t \}$.
	It follows that $f_\mu(\frac{1}{2} y + \frac{1}{2}f_\mu^{-1}(0)) = 0$,
	and hence the point $y$, strictly smaller than $\frac{1}{2} y + \frac{1}{2}f_\mu^{-1}(0)$,
	is not in the support of $\mu$.
	As $y$ is arbitrarily chosen,
	we have $(-\infty, f_{\mu}^{-1}(0)) \cap \operatorname{supp}(\mu) = \emptyset$
	and thus $f_\mu^{-1}(0) \le \inf \operatorname{supp}(\mu)$.
	If $z > f_\mu^{-1}(1)$, then for any $t \in (0, 1)$,
	$\frac{1}{2} z + \frac{1}{2}f_\mu^{-1}(1) > f_\mu^{-1}(t)$
	and thus $\frac{1}{2} z + \frac{1}{2}f_\mu^{-1}(1) \in \{ x \in \mathbb{R} \mid f_\mu(x) > t \}$.
	It follows that $f_\mu(\frac{1}{2} z + \frac{1}{2}f_\mu^{-1}(1)) = 1$,
	and hence the point $z$, strictly bigger than $\frac{1}{2} z + \frac{1}{2}f_\mu^{-1}(1)$,
	is not in the support of $\mu$.
	As $z$ is arbitrarily chosen,
	we have $(f_{\mu}^{-1}(1), + \infty) \cap \,\operatorname{supp}(\mu) = \emptyset$
	and thus $f_\mu^{-1}(1) \ge \sup \operatorname{supp}(\mu)$.

	We now prove the inequalities,
\[
	f_\mu^{-1}(0) \ge \inf \operatorname{supp}(\mu)
	\quad \text{ and } \quad
	f_\mu^{-1}(1) \le \sup \operatorname{supp}(\mu).
\]
	The case that $\inf \operatorname{supp}(\mu) = - \infty$ or $\sup \operatorname{supp}(\mu) = + \infty$
	is trivial, we are left to consider the case where they are finite.
	If $y < \inf \operatorname{supp}(\mu)$, then $f_\mu(y) = 0$ and thus $y \le f_\mu^{-1}(t)$ for all $t \in (0, 1)$,
	which implies $y \le f_\mu^{-1}(0)$.
	As $y$ is arbitrarily chosen, we must have 
	$f_\mu^{-1}(0) \ge \inf \operatorname{supp}(\mu)$.
	If $z > \sup \operatorname{supp} (\mu)$, then $f_\mu(z) = 1$ and thus $z \ge f_\mu^{-1}(t)$ for all $t \in (0, 1)$,
	which implies $z \ge f_\mu^{-1}(1)$.
	As $z$ is arbitrarily chosen, we must have 
	$f_\mu^{-1}(1) \le \sup \operatorname{supp}(\mu)$.

	Since $\mu$ has compact support if and only if both
	$\inf \operatorname{supp}(\mu)$ and $\sup \operatorname{supp}(\mu)$ are finite,
	our last statement in the lemma follows.
\end{proof}

In Subsection \ref{sec:quantile}, we describe the barycenter $\mu_{\mathbb{P}}$ of a probability measure $\mathbb{P} \in \was(\was(\R))$ as the integral of the map $\mu \longmapsto f_{\mu}^{-1}(t)$ with respect to $\mathbb{P}$. The goal of the next lemma is to make sure that this map is measurable.

\medskip

It is a classical result that weak convergence of probability measures $\mu_n \rightharpoonup \mu_{\infty}$ can be equivalently characterised as the pointwise convergence of the corresponding distribution functions $f_{\mu_{n}}(t)$ towards $f_{\mu_{\infty}}(t)$ at every continuity point $t$ of $f_{\mu_{\infty}}$. The same characterisation holds when quantile functions are used instead, also at continuity points of $f_{\mu_{\infty}}^{-1}$; we refer to \cite[Proposition 5.7 of Chapter III]{cinlar2011probability} for a proof. The discontinuity points of a nondecreasing  function $g:(0,1) \longrightarrow \R$ being at most countable, one infer the following property:

\begin{lem}
	\label{lem:upper_semicontinuity_quantile_functions}
	Let $\{\mu_n\}_{n \in \mathbb{N}}$ be a sequence of probability measures on $\mathbb{R}$ converging weakly to $\mu_{\infty}$
	Then for any $t \in (0,1)$, it holds
	\begin{equation}
		\label{equa:upper_semicontinuity_quantile_functions}
		\limsup_{n \rightarrow +\infty} f_{\mu_n}^{-1}(t) \le f_{\mu_{\infty}}^{-1}(t),\,
		\forall\; t \in [0, 1) \quad \text{and} \quad
		\liminf_{n \rightarrow +\infty} f_{\mu_n}^{-1}(1) \ge f_{\mu_{\infty}}^{-1}(1).
	\end{equation}
\end{lem}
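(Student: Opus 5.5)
The plan is to reduce both inequalities to elementary properties of quantile functions and of distribution functions, using the known fact that weak convergence $\mu_n \rightharpoonup \mu_\infty$ implies $f_{\mu_n}(x) \to f_{\mu_\infty}(x)$ at every continuity point $x$ of $f_{\mu_\infty}$. Since $f_{\mu_\infty}$ is nondecreasing, its continuity points are dense in $\R$. This is all we will need.

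For the first inequality, fix $t \in (0,1)$; the case $t=0$ will then follow by monotonicity. Let $x > f_{\mu_\infty}^{-1}(t)$ be a continuity point of $f_{\mu_\infty}$. By the definition of the quantile function as an infimum, and since $f_{\mu_\infty}$ is nondecreasing, we have $f_{\mu_\infty}(x) > t$. Pointwise convergence at $x$ then gives $f_{\mu_n}(x) > t$ for all $n$ large enough, hence $f_{\mu_n}^{-1}(t) \le x$ for those $n$. Therefore $\limsup_n f_{\mu_n}^{-1}(t) \le x$. Letting $x \downarrow f_{\mu_\infty}^{-1}(t)$ along continuity points, which is possible by density, yields the claim. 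If $f_{\mu_\infty}^{-1}(t)=+\infty$ there is nothing to prove.

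For $t=0$, we use that $f_{\mu_n}^{-1}(0) \le f_{\mu_n}^{-1}(s)$ for every $s \in (0,1)$. The first case gives $\limsup_n f_{\mu_n}^{-1}(0) \le f_{\mu_\infty}^{-1}(s)$, and letting $s \downarrow 0$ gives $f_{\mu_\infty}^{-1}(0)$ by definition.

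For the second inequality, we use $f_{\mu_n}^{-1}(1) \ge f_{\mu_n}^{-1}(s)$ for every $s<1$, together with the companion lower bound $\liminf_n f_{\mu_n}^{-1}(s) \ge f_{\mu_\infty}^{-1}(s^-)$. To prove this bound, take a continuity point $y < f_{\mu_\infty}^{-1}(s)$ of $f_{\mu_\infty}$. Then $f_{\mu_\infty}(y) \le s$, and we need strict room for the limit argument. So we take $y$ below $f_{\mu_\infty}^{-1}(s')$ with $s'<s$, giving $f_{\mu_\infty}(y) \le s' < s$. Hence $f_{\mu_n}(y) < s$ eventually, so $f_{\mu_n}^{-1}(s) \ge y$. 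Combining these, $\liminf_n f_{\mu_n}^{-1}(1) \ge f_{\mu_\infty}^{-1}(s')$ for every $s'<1$. Letting $s' \uparrow 1$ gives the result.

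The only delicate point is the handling of strict versus non-strict inequalities at jumps of $f_{\mu_\infty}$, which is exactly why we pass through continuity points and intermediate levels $s'$. An alternative route, equally valid, is to invoke the cited convergence of quantile functions at continuity points of $f_{\mu_\infty}^{-1}$ and use right continuity together with monotonicity.
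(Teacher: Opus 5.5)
Your proof is correct, but it takes a different route from the paper. The paper treats as a black box the convergence $f_{\mu_n}^{-1}(s)\to f_{\mu_\infty}^{-1}(s)$ at every continuity point $s$ of $f_{\mu_\infty}^{-1}$, cited from \cite{cinlar2011probability}. It picks such points $t+\varepsilon_k\downarrow t$, bounds $f_{\mu_n}^{-1}(t)\le f_{\mu_n}^{-1}(t+\varepsilon_k)$ by monotonicity, and concludes by right-continuity of $f_{\mu_\infty}^{-1}$. The case $t=1$ is left to the reader; it goes the same way with points $1-\varepsilon_k\uparrow 1$ and the definition of $f_{\mu_\infty}^{-1}(1)$ as a left limit.

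You use only the more classical characterisation of weak convergence, through distribution functions at continuity points of $f_{\mu_\infty}$. From it you derive the one-sided bounds directly from the definition of the quantile as an infimum, so in effect you reprove the half of the cited quantile-convergence result that is needed. This makes your argument self-contained, and you also write out the $t=1$ case that the paper omits.

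The cost is that you must handle flat pieces of $f_{\mu_\infty}$ by hand. For $y<f_{\mu_\infty}^{-1}(s)$ one only gets $f_{\mu_\infty}(y)\le s$, and convergence at $y$ then does not give $f_{\mu_n}(y)<s$ eventually. This is exactly why your intermediate level $s'<s$ is needed, and you handled it correctly. The paper's route is shorter because the cited result already absorbs this strict-versus-non-strict issue.
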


\begin{proof}
	For $t \in [0, 1)$, there is sequence of decreasing and positive numbers $\{\varepsilon_k\}_{k \in \mathbb{N}}$
	such that $ t + \varepsilon_k \in (0, 1)$ and $f_\mu^{-1}$ is continuous at $ t + \varepsilon_k$.
	Since quantile functions are non-decreasing, we have
	\[
		\limsup_{n \rightarrow +\infty} f_{\mu_n}^{-1}(t)  \le
		\limsup_{n \rightarrow +\infty} f_{\mu_n}^{-1}(t + \varepsilon_k)
		= \lim_{n \rightarrow +\infty} f_{\mu_n}^{-1}(t + \varepsilon_k)
		= f_{\mu}^{-1}(t + \varepsilon_k),
	\]
	which implies $\limsup_{n \rightarrow +\infty} f_{\mu_n}^{-1}(t) \le f_{\mu}^{-1}(t)$
	by the right-continuity of $f_\mu^{-1}$ at $t$.
  The proof for $t=1$ is similar and left to the reader.
	
\end{proof}

We end this part by showing that $\was(\R)$ is isometric to a closed convex subset of a Hilbert space.

In what follows, $[g]$ stands for the equivalence class in $L^2([0,1])$ of a function $g:(0,1) \longrightarrow \R$.

Let us define

$$Q:= \{[g] \in L^2([0,1]); g:(0,1) \longrightarrow \R \mbox{ nondecreasing, right continuous }\}.$$

\begin{thm}
	\label{thm:optimal_transport_on_real_line}
The map
$$\begin{array}{rcl}
\was(\R) & \longrightarrow & Q \subset L^2([0,1]) \\
  \mu & \longmapsto &  f_{\mu}^{-1}
\end{array}	$$
	
	is  a surjective isometry onto the \emph{closed convex} subset $Q$ of $L^2([0,1])$:
		\begin{equation}
		\label{equa:transport_cost_on_real_line}
		d_W(\mu, \nu)^2 = \int_0^1 [f_\mu^{-1}(t) - f_\nu^{-1}(t)]^2 \diff t.
	\end{equation}
\end{thm}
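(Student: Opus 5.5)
The plan is to reduce everything to the classical fact that, on the real line, the monotone (quantile-to-quantile) coupling is optimal, and then to check that the image is closed and convex in $L^2([0,1])$.

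\textbf{Isometry.} Let $U$ be the uniform (Lebesgue) measure on $(0,1)$. First I will show that $(f_\mu^{-1})_\sharp U=\mu$. This follows from the equivalence $f_\mu^{-1}(t)\le x \iff t< f_\mu(x)$ (up to the boundary convention), which comes directly from the definition of $f_\mu^{-1}$ as an infimum together with right continuity of $f_\mu$. Hence $\pi_{mon}:=(f_\mu^{-1},f_\nu^{-1})_\sharp U$ lies in $\Gamma(\mu,\nu)$. The same identity gives $\int_0^1 |f_\mu^{-1}|^2\,\diff t=\int x^2\,\diff\mu<\infty$, so the map indeed takes values in $L^2$. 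Integrating $d^2$ against $\pi_{mon}$ gives the right-hand side of \eqref{equa:transport_cost_on_real_line} as an upper bound for $d_W(\mu,\nu)^2$.

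For the reverse inequality, I will prove that $\pi_{mon}$ is optimal. The support of $\pi_{mon}$ is monotone: any two of its points $(x_1,y_1),(x_2,y_2)$ with $x_1<x_2$ satisfy $y_1\le y_2$. This is exactly cyclical monotonicity for the cost $|x-y|^2$, since on the line the inequality $|x_1-y_1|^2+|x_2-y_2|^2\le|x_1-y_2|^2+|x_2-y_1|^2$ is equivalent to $(x_2-x_1)(y_2-y_1)\ge 0$. For costs with finite second moments, cyclical monotonicity of the support implies optimality, and this standard result can be quoted from \cite{villani2009optimal} or \cite{Santambrogio2015}.

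Alternatively, an elementary route avoids that citation. Write $\int (x-y)^2\,\diff\pi=\int x^2\,\diff\mu+\int y^2\,\diff\nu-2\int xy\,\diff\pi$. For any $\pi\in\Gamma(\mu,\nu)$, use the layer-cake/Hoeffding formula $\int xy\,\diff\pi = \iint \bigl(\pi(X>a,Y>b)-\ldots\bigr)$ together with the Fréchet bound $\pi(X>a,Y>b)\le\min(1-f_\mu(a),1-f_\nu(b))$. The comonotone coupling attains this bound, so it maximises $\int xy\,\diff\pi$. I regard this step, justifying optimality of the monotone coupling, as the main obstacle, and I would settle it by citing cyclical monotonicity.

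\textbf{Surjectivity and image.} The map lands in $Q$ because quantile functions are nondecreasing and right continuous. Conversely, let $[g]\in Q$. The text recalls that $g$ is the quantile function of $\mu:=g_\sharp U$, and $\int x^2\,\diff\mu=\|g\|_{L^2}^2<\infty$, so $\mu\in\was(\R)$. Injectivity holds since the quantile function determines $\mu$ via $\mu=(f_\mu^{-1})_\sharp U$. The map is therefore a bijection onto $Q$, and by the isometry identity a surjective isometry.

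\textbf{Convexity and closedness of $Q$.} Convexity is immediate, because convex combinations of nondecreasing right continuous functions are again nondecreasing and right continuous. For closedness, take $[g_n]\to [h]$ in $L^2$ with $g_n$ nondecreasing. A subsequence converges almost everywhere, so $h$ agrees almost everywhere with a nondecreasing function $\tilde h$ on a full-measure set. Let $h^*$ be the right-continuous regularisation of $\tilde h$, namely $h^*(t)=\lim_{s\downarrow t,\,s\in D}\tilde h(s)$ with $D$ that full-measure set. Then $h^*$ is nondecreasing, right continuous and real valued on $(0,1)$, and it differs from $\tilde h$ only at its countably many jumps. Hence $[h]=[h^*]\in Q$, so $Q$ is closed. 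Alternatively, closedness follows at once from $Q$ being an isometric image of the complete space $\was(\R)$.
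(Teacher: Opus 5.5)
Your proof is correct and follows the standard argument in the references the paper cites in place of a proof (Villani's \emph{Topics}, Theorem 2.18, and Santambrogio): the quantile coupling $(f_\mu^{-1},f_\nu^{-1})_\sharp U$ has monotone support, hence is optimal, and your direct check that $Q$ is closed and convex supplies what the paper leaves to the citation. Two small points need tightening. First, with the paper's right-continuous quantile you only have $t<f_\mu(x)\Rightarrow f_\mu^{-1}(t)\le x\Rightarrow t\le f_\mu(x)$; this suffices because $\{t=f_\mu(x)\}$ is a single point. Second, passing from your two-point inequality to full cyclical monotonicity needs the rearrangement inequality, applied after ordering the finitely many points comonotonically.
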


We refer to \cite[Theorem 2.18]{villani2021topics} and \cite{Santambrogio2015} for a proof.

\begin{rmk}
Note that if $\nu=\delta_0$ then $f_{\nu}^{-1}(t)\equiv 0$ on $(0,1)$. Therefore, for such a choice \eqref{equa:transport_cost_on_real_line} reads
$$ \int_{\R} |x|^2 \diff \mu(x) = \int_0^1 [f_\mu^{-1}(t)]^2 \diff t.$$

\end{rmk}

\subsubsection{Integral formula for Wasserstein barycenters on the real line}\label{sec:quantile}

In this part, we take advantage of the linear structure on $L^2([0,1])$ to prove a more standard expression of a Wasserstein barycenter on $\R$ through its quantile function.
In the case of a finitely supported probability measure $\mathbb{P} = \sum_{i=1}^n \lambda_i \delta_{\nu_i}$,
the quantile function of its barycenter $\mu_\mathbb{P}$ is given by the formula
\cite[\S 3.1.4]{panaretos2020invitation},
\[
	f_{\mu_\mathbb{P}}^{-1}(t) = \sum_{i=1}^n \lambda_i \, f_{\nu_i}^{-1}(t)
	= \int_{\mathcal{W}_2(\mathbb{R})} f_\nu^{-1}(t) \diff \mathbb{P}(\nu),\quad
	\forall\, t \in [0, 1].
\]

For general $\mathbb{P} \in \was(\was(\R))$, we have

\begin{thm}[Wasserstein barycenters on the real line]
	\label{lem:Wasserstein_barycenter_real_line}
	Let $\mathbb{P} \in \mathcal{W}_2(\mathcal{W}_2(\mathbb{R}))$ be a probability measure
	on the Wasserstein space $(\mathcal{W}_2(\mathbb{R}), d_W)$.
	Then $\mathbb{P}$ has a unique Wasserstein barycenter ${\mu_\mathbb{P}} \in \mathcal{W}_2(\mathbb{R})$,
	whose quantile functions satisfies
	\begin{equation}
		\label{equa:formula_Wasserstein_barycenter_on_R}
		f_{\mu_\mathbb{P}}^{-1}(t) =
		\int_{\mathcal{W}_2(\mathbb{R})} f_\nu^{-1}(t) \diff \mathbb{P}(\nu),
		\quad \forall\, t \in [0, 1].
	\end{equation}
	In particular, the integral in (\ref{equa:formula_Wasserstein_barycenter_on_R})
	is finite for $t \in (0, 1)$. %and the inequality still holds
%	when it takes possibly infinite values for the case $t = 0, 1$.
\end{thm}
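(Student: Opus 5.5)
The plan is to go through the isometry of \Cref{thm:optimal_transport_on_real_line}, which identifies $\was(\R)$ with the closed convex cone $Q\subset L^2([0,1])$. Write $\Phi(\mu)=f_\mu^{-1}$ and $\mathbb{Q}:=\Phi_\sharp\mathbb{P}$. Since $\Phi$ is an isometry, $\Phi(\delta_0)=0$, and $\mathbb{P}\in\was(\was(\R))$, we have $\int\|g\|_{L^2}^2\diff\mathbb{Q}(g)<\infty$. The map $\Phi$ is continuous, hence Borel, so $\mathbb{Q}$ is a Borel probability measure on the separable Hilbert space $L^2([0,1])$ with finite second moment. Its Bochner mean $\bar g:=\int g\diff\mathbb{Q}(g)$ is therefore well defined in $L^2$.

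The barycenter problem becomes the minimisation of $h\mapsto\int\|h-g\|^2\diff\mathbb{Q}(g)$ over $h\in Q$. In a Hilbert space we have the identity
\[
\int\|h-g\|^2\diff\mathbb{Q}(g)=\|h-\bar g\|^2+\int\|g-\bar g\|^2\diff\mathbb{Q}(g).
\]
This functional is strictly convex in $h$ and is uniquely minimised over all of $L^2$ at $h=\bar g$. So it suffices to show that $\bar g\in Q$. Since $Q$ is a closed convex set and $\mathbb{Q}$ is concentrated on $Q$, the Bochner mean lies in $Q$: approximate $\mathbb{Q}$ by finitely supported measures on $Q$ (for instance via simple functions in the Bochner construction, using separability), whose means are convex combinations lying in $Q$, and then use closedness. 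Alternatively, apply Hahn--Banach: any closed half-space containing $Q$ contains $\bar g$. Because $\Phi$ is a bijection onto $Q$, we get $\mu_{\mathbb{P}}:=\Phi^{-1}(\bar g)$, and this is the unique barycenter. Uniqueness follows from strict convexity transported back through the isometry.

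It remains to identify $\bar g$ pointwise with the integral $t\mapsto\int f_\nu^{-1}(t)\diff\mathbb{P}(\nu)$, which is the main technical step. First, for each fixed $t\in(0,1)$, the map $\nu\mapsto f_\nu^{-1}(t)$ is Borel on $\was(\R)$. By \Cref{lem:upper_semicontinuity_quantile_functions} it is upper semicontinuous with respect to weak convergence, hence with respect to $d_W$, and so it is Borel. Joint measurability in $(t,\nu)$ follows from right continuity in $t$, via approximation by rational $t$.

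Next, integrability for $t\in(0,1)$. Monotonicity gives, for $t\in(0,1)$,
\[
|f_\nu^{-1}(t)|\le \frac{1}{\min(t,1-t)}\int_0^1|f_\nu^{-1}|\le \frac{\|f_\nu^{-1}\|_{L^2}}{\min(t,1-t)}.
\]
Since $\|f_\nu^{-1}\|_{L^2}=d_W(\nu,\delta_0)$ is $\mathbb{P}$-integrable, the pointwise integral $G(t):=\int f_\nu^{-1}(t)\diff\mathbb{P}(\nu)$ is finite. By Fubini, valid thanks to the same bound integrated over compact subintervals of $(0,1)$, $G$ pairs with test functions $\varphi\in L^2$ exactly as $\bar g$ does, since $\langle\bar g,\varphi\rangle=\int\langle g,\varphi\rangle\diff\mathbb{Q}$. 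Hence $G=\bar g$ almost everywhere.

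Finally, $G$ is nondecreasing as an average of nondecreasing functions, and it is right continuous by dominated convergence with the bound above on $[t,t+\varepsilon]$. So $G$ is the canonical right-continuous representative of $\bar g$, that is $G=f_{\mu_\mathbb{P}}^{-1}$ on $(0,1)$. The endpoint values $t=0,1$ follow by monotone convergence from the limit definitions in \Cref{defn:quantile_function}, possibly taking infinite values consistently on both sides.
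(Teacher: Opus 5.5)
Your proof is correct, but it is organised differently from the paper's. The paper stays at the pointwise level throughout. It defines $g(t)=\int f_\nu^{-1}(t)\,\diff\mathbb{P}(\nu)$ directly and shows via Fubini and Cauchy--Schwarz that $g\in L^2([0,1])$. It then checks by hand, using monotonicity of each $f_\nu^{-1}$ and monotone convergence, that $g$ is finite, nondecreasing and right continuous on $(0,1)$, hence the quantile function of some $\mu_{\mathbb{P}}$. Existence, the formula and uniqueness then come together from the pointwise bias--variance identity $\int d_W(\eta,\nu)^2\,\diff\mathbb{P}(\nu)=\|f_\eta^{-1}-g\|_{L^2}^2+I(\mathbb{P})$. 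You instead obtain existence and uniqueness abstractly (Bochner mean in $L^2$, the Hilbert variance identity, Hahn--Banach plus closedness of $Q$ to place $\bar g$ in $Q$), and only afterwards identify $\bar g$ with the pointwise average. Your route is more structural: it works verbatim for any Wasserstein space isometric to a closed convex subset of a Hilbert space, and it isolates exactly where one-dimensionality enters. Your explicit bound $|f_\nu^{-1}(t)|\le d_W(\nu,\delta_0)/\min(t,1-t)$ also gives finiteness at every $t\in(0,1)$ and a local dominating function more cleanly than the paper's argument by contradiction, which implicitly presupposes that $g(t)$ is well defined at each $t$. The price is Bochner integration and the closedness of $Q$, neither of which the paper needs. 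Note also that once you have shown $G=\bar g$ a.e. with $G$ nondecreasing and right continuous, $\bar g=[G]\in Q$ follows immediately from the definition of $Q$. So your Hahn--Banach step is redundant, and dropping it is precisely the shortcut the paper takes.
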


This theorem was proved under stronger assumptions in \cite{bigot2018characterization}.

\begin{proof}
	By \Cref{lem:upper_semicontinuity_quantile_functions},
	the map $\nu \mapsto f_\nu^{-1}(t)$ is upper semi-continuous for $t \in [0, 1)$
	and lower semi-continuous for $t = 1$,
	hence it is a measurable map for any parameter  $t \in [0, 1]$. 
	Besides, it follows from \Cref{thm:optimal_transport_on_real_line} and Fubini's theorem that
	\[
		\int_0^1 \int_{\mathcal{W}_2(\mathbb{R})} [f_\nu^{-1}(t)]^2 \diff \mathbb{P}(\nu) \diff t
		= \int_{\mathcal{W}_2(\mathbb{R})} \int_0^1 [f_\nu^{-1}(t)]^2 \diff t \diff \mathbb{P}(\nu)
		= \int_{\mathcal{W}_2(\mathbb{R})}  d_W(\delta_0, \nu)^2 \diff \mathbb{P}(\nu)  < + \infty.
	\]
	
	Consequently we infer from the Cauchy–Schwarz inequality
	that the function $g: [0, 1] \rightarrow \widebar{\mathbb{R}}$ defined by $ g(t) :=
		\int_{\mathcal{W}_2(\mathbb{R})} f_\nu^{-1}(t) \diff \mathbb{P}(\nu)$ for $t \in [0, 1]$
	is an element in $L^2([0, 1])$.

	We claim that $g$ is a non-decreasing and right-continuous function
	on $(0, 1)$ with $g(0) = \lim_{t \downarrow 0} g(t)$ and $g(1) = \lim_{t \uparrow 1} g(t)$.
	We first show that $g$ must be finite on $(0, 1)$.
	Indeed, if $g(t) = + \infty$ for some $t \in (0, 1)$, then
	$g(s) = + \infty$ for any $ s \in [t, 1]$ since any quantile function $f_\nu^{-1}$
	is non-decreasing, which contradicts the fact that $g \in L^2([0,1])$.
	Due to the same reason, we cannot have $g(t) = - \infty$ for some $ t \in (0, 1)$.
	Hence, $g$ is finite and non-decreasing on $(0, 1)$.
	Fix $t \in [0, 1)$, we show that $g$ is right-continuous at $t$.
	Let $\{t_n\}_{n \ge 1} \subset (t, \frac{1+t}{2})$ be a decreasing sequence
	smaller than $\frac{1+t}{2}$ that converges to $t$.
	Applying the monotone convergence theorem with measure $\mathbb{P}$ to the
	non-increasing sequence of non-positive
	functions $\nu \mapsto f_\nu^{-1}(t_n) - f_\nu^{-1}(\frac{1+t}{2})$,
	we obtain $\lim_{n \rightarrow \infty} g(t_n) - g(\frac{1 + t}{2}) = g(t) - g(\frac{1+t}{2})$,
	which shows that $g$ is right-continuous at $t$ since
	the decreasing sequence $\{t_n\}_{n \ge 1}$ is arbitrarily chosen
	and $g$ is non-decreasing on $(0, 1)$.
	By a similar application of the monotone convergence theorem,
	we see that $g(1) = \lim_{t \uparrow 1} g(t)$.
	Hence, our claim on $g$ is proven.

	Since $g|_{(0, 1)}$ is non-decreasing and right-continuous,
	there exists a unique measure
	$\mu_\mathbb{P} \in \mathcal{W}_2(\mathbb{R})$ such that $f_{\mu_\mathbb{P}}^{-1} = g$.
	It remains to show that ${\mu_\mathbb{P}}$ is the unique barycenter of $\mathbb{P}$.
	For any measure $\eta \in \mathcal{W}_2(\mathbb{R})$,
	by \Cref{thm:optimal_transport_on_real_line} and Fubini's theorem, we have
	\begin{align}
		\nonumber \int_{\mathcal{W}_2(\mathbb{R})} d_W(\eta, \nu)^2 \diff \mathbb{P}(\nu)
		 & = \int_0^1 \int_{\mathcal{W}_2(\mathbb{R})}
		[f_\eta^{-1}(t) - f_\nu^{-1}(t)]^2 \diff \mathbb{P}(\nu) \diff t \\
		\label{equa:rewrite_Wasserstein_barycenter_real_line}
		 & =  \int_0^1 \left[
			f_\eta^{-1}(t) - \int_{\mathcal{W}_2(\mathbb{R})} f_\nu^{-1}(t) \diff \mathbb{P}(\nu)
			\right]^2 \diff t + I(\mathbb{P}),
	\end{align}
	where the abbreviated term $I(\mathbb{P})$ is independent of $\eta$:
	\[
		I(\mathbb{P}):  = \int_0^1 \int_{\mathcal{W}_2(\mathbb{R})} [ f_\nu^{-1}(t) ]^2 \diff \mathbb{P}(\nu) -
		\left( \int_{\mathcal{W}_2(\mathbb{R})}  f_\nu^{-1}(t) \diff \mathbb{P}(\nu) \right)^2
		\diff t.
	\]
	It follows from (\ref{equa:rewrite_Wasserstein_barycenter_real_line})
	that the infimum $\inf_{\eta \in \mathcal{W}_2(\mathbb{R})}
		\int_{\mathcal{W}_2(\mathbb{R})} d_W(\eta, \nu)^2 \diff \mathbb{P}(\nu)$
	is reached by $\eta$ if and only if
	$f_\eta^{-1}(t) = \int_{\mathcal{W}_2(\mathbb{R})} f_\nu^{-1}(t) \diff \mathbb{P}(\nu) = g(t) = f_{\mu_\mathbb{P}}^{-1}(t)$
	for almost every $t \in [0, 1]$.
	Therefore, ${\mu_\mathbb{P}}$ is a barycenter of $\mathbb{P}$
	and its uniqueness follows from
	the injectivity of the embedding in  \Cref{thm:optimal_transport_on_real_line}.
\end{proof}

Let us also mention the following result which is a particular case of the main result in \cite{ma2023absolute}:

\begin{thm}\label{thm-ac-was-R}
Let $\mathbb{P} \in \mathcal{W}_2(\mathcal{W}_2(\mathbb{R}))$ be a probability measure on $(\mathcal{W}_2(\mathbb{R}), d_W)$. Assume that $\mathbb{P}(\was^{ac}(\R))>0$ where $\was^{ac}(\R)$ is the set of probability measures in $\was(\R)$ that are absolutely continuous with respect to the Lebesgue measure on $\R$.
Then the unique barycenter $\mu_{\mathbb{P}}$ also belongs to $\was^{ac}(\R)$.
\end{thm}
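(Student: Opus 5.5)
The plan is to work entirely through the quantile representation. By \Cref{lem:Wasserstein_barycenter_real_line}, the barycenter $\mu_{\mathbb{P}}$ has quantile function
$$g(t)=\int_{\was(\R)} f_\nu^{-1}(t)\diff \mathbb{P}(\nu).$$
Moreover, $\mu_{\mathbb{P}}=g_\sharp \Leb\res_{(0,1)}$, as for any quantile function. So it suffices to show the following: for every Lebesgue-null set $N\subset\R$ and every $\varepsilon>0$, we have $\Leb(g^{-1}(N))\le \varepsilon$. The mechanism is that $g$ increases at least as fast as the $\mathbb{P}$-average of the quantile functions of absolutely continuous measures. Such quantile functions cannot stay inside small sets on a large set of times.

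\textbf{Step 1: uniform absolute continuity.} Set $p:=\mathbb{P}(\was^{ac}(\R))>0$. For each $\nu\in\was^{ac}(\R)$, absolute continuity gives an $\varepsilon$--$\delta$ modulus. Choose $\eta(\nu)>0$ such that any finite or countable union of intervals of total length $\le\eta(\nu)$ has $\nu$-mass $\le\varepsilon$. The sets $\{\nu\in\was^{ac}(\R):\eta(\nu)\ge 1/m\}$ increase to $\was^{ac}(\R)$. We may work with an inner-regular measurable version, or define $\eta$ via a countable dense family of interval unions to ensure measurability. Hence there is $\eta_0>0$ and a set $A$ with $\mathbb{P}(A)\ge p/2$ on which the modulus $\eta_0$ works uniformly.

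\textbf{Step 2: covering and monotonicity.} Cover $N$ by countably many disjoint open intervals $I_k$ with $\sum_k|I_k|\le\delta$, where $\delta$ will be fixed later. Since $g$ is nondecreasing, each $g^{-1}(I_k)$ is an interval $T_k=(s_k,t_k)$ up to endpoints, and these $T_k$ are pairwise disjoint. For $s<t$, every $f_\nu^{-1}$ is nondecreasing, so
$$\int_A \bigl(f_\nu^{-1}(t_k^-)-f_\nu^{-1}(s_k)\bigr)\diff\mathbb{P}(\nu)\le g(t_k^-)-g(s_k)\le |I_k|.$$
Summing over $k$ yields
$$\int_A \sum_k \ell_k(\nu)\diff \mathbb{P}(\nu)\le\delta,$$
where $\ell_k(\nu)$ is the length of the hull $J_k(\nu)$ of $f_\nu^{-1}(T_k)$. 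By Markov's inequality, $\mathbb{P}\bigl(\{\nu\in A:\sum_k\ell_k(\nu)>\eta_0\}\bigr)\le\delta/\eta_0$. Choosing $\delta<\eta_0 p/2$ guarantees that some $\nu\in A$ satisfies $\sum_k\ell_k(\nu)\le\eta_0$.

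\textbf{Step 3: conclusion.} For this $\nu$, we have $T_k\subset (f_\nu^{-1})^{-1}(J_k(\nu))$, so $|T_k|\le\nu(J_k(\nu))$. The $J_k(\nu)$ overlap at most at endpoints, and endpoints carry no $\nu$-mass. Hence
$$\Leb(g^{-1}(N))\le\sum_k|T_k|\le \nu\Bigl(\bigcup_k J_k(\nu)\Bigr)\le\varepsilon,$$
which proves $\mu_{\mathbb{P}}(N)=0$.

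\textbf{Main obstacle.} I expect the hardest part to be the measure-theoretic bookkeeping in Step 1: making $\nu\mapsto\eta(\nu)$ measurable, or otherwise extracting a uniform modulus on a set of positive $\mathbb{P}$-measure. Two smaller points also need care: the one-sided limits at the endpoints of the $T_k$, and the fact that a jump of $f_\nu^{-1}$ (a gap in the support of $\nu$) inflates $\ell_k$ without adding mass. That second effect only helps the inequality.
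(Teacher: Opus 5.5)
Your argument is correct, and it takes a genuinely different route from the paper. The paper gives no proof of this statement: it quotes it as the flat one-dimensional instance of the main theorem of \cite{ma2023absolute}, an absolute continuity result for Wasserstein barycenters over Riemannian manifolds with Ricci curvature bounded below.

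You instead stay entirely inside the paper's own analysis of $\was(\R)$. You use the averaged quantile formula of \Cref{lem:Wasserstein_barycenter_real_line} together with $\mu_{\mathbb{P}}=g_\sharp\Leb\res_{(0,1)}$. Your key observation is that every increment $g(t)-g(s)$ dominates the $\mathbb{P}$-average over $A$ of the increments $f_\nu^{-1}(t)-f_\nu^{-1}(s)$. Markov's inequality then produces a single $\nu\in A$ whose quantile function moves by a total amount at most $\eta_0$ on $g^{-1}\bigl(\bigcup_k I_k\bigr)$. The uniform modulus of absolute continuity of that $\nu$ then bounds $\Leb(g^{-1}(N))$.

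What your route buys is a short, elementary, self-contained proof, with no curvature and no approximation of $\mathbb{P}$. This step of \Cref{thm:almost_absolute_continuity} then no longer depends on an external Riemannian theorem. What the cited result buys is generality: it covers higher-dimensional and curved base spaces, where there is no monotone parametrisation like the quantile function and your monotonicity mechanism has no direct analogue.

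The technical points you flag are routine:
\begin{itemize}
\item \emph{Measurability.} Consider $\nu\mapsto\sup_U\nu(U)$, with $U$ ranging over the countable family of finite unions of rational open intervals with $\Leb(U)<\eta$. This map is lower semicontinuous. It both shows that $\was^{ac}(\R)$ is Borel and yields Borel sets $A_m$ increasing to $\was^{ac}(\R)$. Outer regularity of $\Leb$ extends the bound to all Borel sets of measure $<1/m$, so work with strict inequalities.
\item \emph{Integrability.} The bound $f_\nu^{-1}(t)^2\le\min(t,1-t)^{-1}\int_\R x^2\diff\nu(x)$ makes $f_\nu^{-1}(t)$ $\mathbb{P}$-integrable for $t\in(0,1)$. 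Hence $g(t)-g(s)=\int(f_\nu^{-1}(t)-f_\nu^{-1}(s))\diff\mathbb{P}(\nu)$ is legitimate.
\item \emph{Endpoints.} Take $s<t$ inside $T_k$ and let $s\downarrow\inf T_k$, $t\uparrow\sup T_k$ using monotone convergence. This gives $\int_A\ell_k\diff\mathbb{P}\le|I_k|$ without ever evaluating $g$ at $0$ or $1$.
\item \emph{Step 3.} You do not need atomlessness of $\nu$. The inclusion $\bigcup_kT_k\subset(f_\nu^{-1})^{-1}\bigl(\bigcup_kJ_k(\nu)\bigr)$, with the $T_k$ disjoint, already gives $\sum_k|T_k|\le\nu\bigl(\bigcup_kJ_k(\nu)\bigr)$.
\end{itemize}
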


%%%%%%%%%%%%%%%%%%%%%%%%%%%%%%%%%%%%%%%%%%%%
\section{Fine properties of Optimal Transport over a metric graph}\label{sec-fine}

\subsection{Restriction property}\label{section-restriction-pro}

%%%%%%%%%%%%%%%%%%%%%%%%%%%%%%%%%%%%%%%%%%%%%%%%%%%

It is a classical result that the restriction $\tilde{\pi}$ of an optimal plan $\pi$ (namely $0\not\equiv \tilde{\pi} \leq \pi$) is still  optimal, see for instance \cite[Theorem 4.6]{villani2009optimal} for a proof. This section aims at generalising this property to barycenters of measures, see Proposition \ref{prop-restri-barycenters}. While we shall make use of this result in the case of a metric graph, the natural set-up for this property is that of Polish spaces. We believe this result can be useful in other contexts so we state and prove it at this level of generality.

\medskip

Throughout this section, $(X,d)$ is a \emph{Polish space}. Let us recall the existence of a \emph{measurable} mapping (see Theorem \ref{thm-sel-opt-plan})
\begin{eqnarray*}
\pi : \was(X) \times \was(X)  & \longrightarrow  &   \was(X\times X), \\
						(\mu,\nu)  & \longmapsto    &  \pi_{\mu}^{\nu}
						\end{eqnarray*} 
such that $\pi_{\mu}^{\nu} \in \Gamma^o(\mu,\nu)$ for all $\mu,\nu$. 

\medskip

We fix  $\mu \in \was(X)$. Let us consider $ 0 \not\equiv \tilde{\mu}^1 \nleq \mu$ and $\mu^1:= \frac{1}{\tilde{\mu}^1(X)} \tilde{\mu}^1$ its associated probability measure. Observe that

\begin{itemize}

\item The measure $\tilde{\mu}^1$ induced a decomposition of $\mu$  into a convex combination of probability measures ($\lambda \in (0,1)$):
\begin{equation}\label{eq-decompo-mu}
 \mu = \lambda \mu^1 + (1- \lambda) \mu^2.              
\end{equation}

\item There exist  nonnegative and bounded measurable functions $g_1$ and $g_2$ such that
$$ \mu^i = g_i \,\mu,$$
for $i\in \{1,2\}$.

\item For any $\nu \in \was(X)$, the map $g_i$ induces an optimal plan

\begin{equation*}\label{eq-optimal-res-plans}
   g_i \pi_{\mu}^{\nu}  \in \Gamma^o(\mu^i,\nu^i),
\end{equation*}   
where  $ \nu^i  = {p_2}_{\sharp} (g_i \pi_{\mu}^{\nu}).$
\end{itemize}

\begin{rmk}
Note that $\lambda g_1+(1-\lambda)g_2 =1 $ $\mu$-a.e. The last item is a particular instance of the inheritance of optimality through the restriction of optimal plans as recalled at the beginning of this section.

\end{rmk}

In order to state the restriction property for barycenters of measures, we first need to define

\begin{defn}[Restriction mappings]\label{def-restrict-map}

For each $ i \in \{1,2\}$, we define the \emph{restriction mapping} associated to \eqref{eq-decompo-mu} by
\begin{eqnarray*}
\mathcal{R}^i : \was(X) & \longrightarrow & \was(X) \\
				\nu  & \longrightarrow & \nu^i
\end{eqnarray*}
where $ \nu^i  = {p_2}_{\sharp} (g_i \pi_{\mu}^{\nu}).$
\end{defn}

Let us prove

\begin{lem} The restriction mappings $\mathcal{R}^i$, $i \in \{1,2\}$, are measurable.

\end{lem}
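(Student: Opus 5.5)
We write $\mathcal{R}^i$ as a composition of measurable maps. By \Cref{thm-sel-opt-plan}, the map $\nu \mapsto \pi_{\mu}^{\nu}$ is measurable from $\was(X)$ to $\was(X\times X)$, since $\mu$ is fixed and $\nu\mapsto(\mu,\nu)$ is continuous. It then suffices to show two things. First, the multiplication map $M_i:\pi \mapsto g_i\pi$ is measurable, where $g_i$ is viewed as a function of the first coordinate, $g_i(x)$ for $(x,y)\in X\times X$. Second, the push-forward $\pi\mapsto {p_2}_\sharp \pi$ is measurable, in fact continuous.

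We take the domain of $M_i$ to be the subset $\Gamma_\mu:=\{\pi\in\was(X\times X);\,{p_1}_\sharp\pi=\mu\}$. This set is closed, because ${p_1}_\sharp$ is continuous, and it contains the range of $\nu\mapsto\pi_\mu^\nu$. On $\Gamma_\mu$ the measure $g_i\pi$ is a probability measure, since $\int g_i\,\diff\mu=1$. It has finite second moment, because $g_i$ is bounded.

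The projection step is routine. Continuity of ${p_2}_\sharp:\was(X\times X)\to\was(X)$ follows because $p_2$ is $1$-Lipschitz for the product metric, so $d_W({p_2}_\sharp\pi,{p_2}_\sharp\pi')\le d_W(\pi,\pi')$.

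The main obstacle is the measurability of $M_i$, because $g_i$ is only bounded and measurable, not continuous. The Borel $\sigma$-algebra of $\was(Y)$, with $Y=X\times X$ Polish, is generated by the maps $\pi\mapsto \int \varphi\,\diff\pi$ for $\varphi$ bounded continuous. This holds because the $d_W$-topology is finer than the weak topology, both are Polish, and the weak Borel $\sigma$-algebra restricted to $\was(Y)$ coincides with the $d_W$ one, a standard Lusin–Souslin fact; equivalently, one can use the maps $\pi\mapsto\int \varphi\,\diff \pi$ for $\varphi$ continuous with quadratic growth. So it is enough to show that $\pi\mapsto\int \varphi(x,y)g_i(x)\,\diff\pi(x,y)$ is measurable on $\Gamma_\mu$.

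We approximate $g_i$ in $L^1(\mu)$ by bounded continuous functions $h_k$, using density of $C_b(X)$ in $L^1(\mu)$ for a finite Borel measure on a metric space. For each $k$, the map $\pi\mapsto\int\varphi\, h_k(x)\,\diff\pi$ is weakly continuous, hence measurable. On $\Gamma_\mu$ we have
\[
\Bigl|\int\varphi\,(g_i-h_k)(x)\,\diff\pi\Bigr|\le\|\varphi\|_\infty\int|g_i-h_k|\,\diff\mu\to0,
\]
and this bound is uniform in $\pi$. The target functional is therefore a pointwise (indeed uniform) limit of measurable functions on $\Gamma_\mu$, so it is measurable.

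Composing the three measurable maps gives the measurability of $\mathcal{R}^i$.
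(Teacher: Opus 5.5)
Your proof is correct, and it handles the key step, measurability of $\pi\mapsto g_i\pi$ on $\Gamma_\mu$, differently from the paper.

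The paper works with the measure-valued map directly. It shows $\pi\mapsto g_i\pi$ is $d_W$-continuous when $g_i\in C^b(X)$, by testing against continuous functions of quadratic growth. It then uses Lusin's theorem to get continuous $f_n$ with $\|f_n\|_\infty\le\|g_i\|_\infty$ and $\mu(\{f_n\neq g_i\})<1/n^2$, and deduces $f_n\to g_i$ $\mu$-a.e.\ by Borel--Cantelli. Dominated convergence then gives $f_n\pi\to g_i\pi$ for each fixed $\pi$, so the map is a pointwise limit of continuous maps.

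You instead reduce to real-valued functionals, using that the Borel $\sigma$-algebra of $(\was(X\times X),d_W)$ is generated by the maps $\pi\mapsto\int\varphi\,\diff\pi$ with $\varphi\in C_b$. After that, plain $L^1(\mu)$-density of $C_b(X)$ together with the marginal constraint gives a \emph{uniform} approximation.

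What your route buys:
\begin{itemize}
\item It avoids Lusin's theorem, Borel--Cantelli and the quadratic-growth test class.
\item It sidesteps a point the paper glosses over. The measure $f_n\pi$ has mass $\int f_n\,\diff\mu$, which is generally not $1$, so it is not literally an element of $\was(X\times X)$. Your approximants are just numbers, so this issue never arises.
\end{itemize}
The price is the identification of $\sigma$-algebras, which the paper's approach does not need.

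One small correction there: $\was(Y)$ with the weak topology is not Polish when $Y$ is unbounded, so "both are Polish" is inaccurate. The argument still works if you apply Lusin--Souslin to the continuous injection of the Polish space $(\was(Y),d_W)$ into the Polish space $\P(Y)$ with the weak topology. That gives exactly the identification you use.
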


\begin{proof}
Given $\pi_1, \pi_2 \in \mathcal{W}_2(X \times X)$, if $\sigma \in \mathcal{W}_2(X^2 \times X^2)$
	is an optimal transport plan between $\pi_1$ and $\pi_2$, then
	\begin{eqnarray}\label{eq-X4-plan-esti}
		d_W(\pi_1, \pi_2)^2 &= &\int_{X^2 \times X^2} d(x_1, x_2)^2 + d(y_1, y_2)^2 \diff \sigma((x_1, y_1), (x_2, y_2)) \nonumber\\
							&\ge & \int_{X \times X} d(y_1, y_2)^2 \diff\, [p_2 \times p_2]_{\#} \sigma\, (y_1, y_2)
		\ge d_W({p_2}_{\#} \pi_1, {p_2}_{\#} \pi_2)^2,
	\end{eqnarray}
	which implies the continuity of the push-forward map ${p_2}_{\#}$.
	
	Now recall that $\nu^i ={p_2}_{\sharp} (g_i \pi_{\mu}^{\nu})$. Therefore, we are done if we can prove the  Borel measurability of $\pi \longmapsto g_i\pi$ for $\pi \in \mathcal{W}_2(X\times X)$ such that ${p_1}_{\#} \pi =\mu$. Let us first assume that $g_i \in C^b(X)$. We shall use the characterisation of the Wasserstein convergence on $\mathcal{W}_2(X\times X)$ as a weak convergence for continuous functions $\phi :X\times X \longrightarrow \R$ such that
\begin{equation}\label{eq:conv-Wasserst}
\exists \, C>0, \exists (x_0,y_0) \in X^2; \;\forall (x,y) \in X^2 \hspace{0.5cm}  |\phi(x,y)| \leq C(1+d^2(x_0,x)+d^2(y_0,y)),   
\end{equation}	
we refer to \cite{villani2009optimal} for a proof of this characterisation.

Given a sequence $(\pi_n)_{n \in \mathbb{N}}$ such that $\pi_n \longrightarrow \pi_{\infty}$ in $\mathcal{W}_2(X\times X)$, and $\phi$ a continuous function as in \eqref{eq:conv-Wasserst}, noting that 
$$|\phi(x,y) g_i(x)| \leq \|g\|_{\infty}C(1+d^2(x_0,x)+d^2(y_0,y)),$$
we infer
$$ \int_{X\times X} \phi \, g_i \diff \pi_n \longrightarrow  \int_{X\times X} \phi \, g_i \diff \pi$$
when $n \rightarrow \infty$. This proves the continuity of the mapping $\pi \longmapsto g_i \pi$ when $g_i \in C^b(X)$.

To prove the general case, namely when $g_i\geq 0$ belongs to $L^{\infty}(\mu)$, we make use of Lusin's theorem as proved in \cite[Theorem 7.1.13]{bogachev2007measure}. Using that $(X,d)$ is a Polish space, for any $\varepsilon>0$, we get from this theorem the existence of a continuous function $f:X \longrightarrow [0,\infty)$ such that $\|f\|_{\infty} \leq \|g_i\|_{\infty} $ and  $\mu(\{x \in X; f(x) \neq g_i(x)\}) <\varepsilon$. For each positive integer $n$, let us set $f_n$ a continuous function satisfying Lusin's statement for $\varepsilon=1/n^2$. 

Since a pointwise limit of measurable maps is measurable, we are left with proving that for any $\pi \in \mathcal{W}_2(X\times X )$ such that ${p_1}_{\#} \pi =\mu$, $f_n \pi \longrightarrow g_i\pi$ in $\mathcal{W}_2(X\times X)$ when $n\rightarrow \infty$.

To this aim, let $\phi$ be a continuous function as in \eqref{eq:conv-Wasserst}; observe that
\begin{eqnarray*}
\left| \int_{X \times X} \phi \, f_n \diff \pi -\int_{X\times X} \phi \, g \diff \pi \right| & \leq & \int_{X \times X} |\phi|\, |f_n -g| \diff \pi \\
																						& \leq & C \int_{X \times X}  (1+ d^2(x_0,x) + d^2(y_0,y)) \,|f_n(x) -g_i(x)| \diff\pi.
\end{eqnarray*}

By construction and the Borel-Cantelli lemma,
$$\lim_{n \rightarrow \infty} (1+ d^2(x_0,x) + d^2(y_0,y)) |f_n(x) -g_i(x)| =0$$
for $\mu$-a.e $x$ and every $y$ thus $\pi$-almost everywhere. For $\pi \in \mathcal{W}_2(X\times X)$, the function 
$$(x,y) \longmapsto (1+ d^2(x_0,x) + d^2(y_0,y))$$
belongs to $L^1(\pi)$ the Lebesgue dominated convergence theorem yields
$$ \lim_{n \rightarrow \infty} \left| \int_{X \times X} \phi \,f_n \diff \pi -\int_{X\times X} \phi \, g \diff \pi \right| =0$$
and the proof is complete.
\end{proof}

The following proposition details our method to construct new Wasserstein barycenters by restricting an existing one.

\begin{prop}[Restriction property of Wasserstein barycenters]\label{prop-restri-barycenters}
	%\label{lem:division_of_Wasserstein_barycenter}
	Let $(X,d)$ be a proper Polish space	and $\mu \in \mathcal{W}_2(X)$ be a fixed probability measure on $X$.  %{\color{red} Proper?}
	
	We consider a decomposition as in \eqref{eq-decompo-mu}:
	$$ \mu = \lambda \, \mu^1 + (1 - \lambda) \mu^2$$
    and 	$\mathbb{P} \in \mathcal{W}_2(\mathcal{W}_2(X))$.
    
    Then, for $i\in \{1,2\}$, if $\mu$ is a barycenter of $\mathbb{P}$ then
	$\mu^i$ is a barycenter of $\mathbb{Q}^i: = {\mathcal{R}^i}_{\#}\mathbb{P}$, where $\mathcal{R}^i$ is the corresponding restriction mapping.    
    
\end{prop}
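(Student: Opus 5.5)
We argue by contradiction, gluing a competitor for $\mathbb{Q}^1$ back onto $\mu^2$. Fix $i=1$; the case $i=2$ is symmetric. Suppose some $\eta \in \was(X)$ satisfies
$$\int d_W(\eta,\nu^1)^2 \diff \mathbb{P}(\nu) < \int d_W(\mu^1,\nu^1)^2 \diff \mathbb{P}(\nu),$$
where $\nu^1 = \mathcal{R}^1(\nu)$. By definition of the pushforward $\mathbb{Q}^1 = {\mathcal{R}^1}_\sharp \mathbb{P}$, both sides are the barycentric costs for $\mathbb{Q}^1$. The candidate competitor for $\mathbb{P}$ is $\tilde\mu := \lambda \eta + (1-\lambda)\mu^2$.

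The first step is to bound $d_W(\tilde\mu,\nu)^2$ for each $\nu$. We choose an optimal plan $\sigma_\nu \in \Gamma^o(\eta,\nu^1)$, measurably in $\nu$, using \Cref{thm-sel-opt-plan} composed with the measurable map $\mathcal{R}^1$. We then observe that $\nu = \lambda \nu^1 + (1-\lambda)\nu^2$, since
$$\lambda g_1 \pi_\mu^\nu + (1-\lambda) g_2 \pi_\mu^\nu = \pi_\mu^\nu$$
by the remark following the decomposition. Hence $\lambda \sigma_\nu + (1-\lambda) g_2\pi_\mu^\nu \in \Gamma(\tilde\mu,\nu)$, and
$$d_W(\tilde\mu,\nu)^2 \le \lambda\, d_W(\eta,\nu^1)^2 + (1-\lambda)\int d^2 \diff (g_2\pi_\mu^\nu).$$
On the other hand, optimality of $\pi_\mu^\nu$ together with the restriction property gives the exact identity
$$d_W(\mu,\nu)^2 = \lambda\, d_W(\mu^1,\nu^1)^2 + (1-\lambda)\, d_W(\mu^2,\nu^2)^2,$$
where $g_i\pi_\mu^\nu$ is optimal between $\mu^i$ and $\nu^i$, so that $\int d^2 \diff (g_2\pi_\mu^\nu) = d_W(\mu^2,\nu^2)^2$.

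The second step is integration against $\mathbb{P}$. Subtracting the two displays and integrating yields
$$\int d_W(\tilde\mu,\nu)^2 \diff\mathbb{P} - \int d_W(\mu,\nu)^2 \diff \mathbb{P} \le \lambda\left(\int d_W(\eta,\nu^1)^2 \diff\mathbb{P} - \int d_W(\mu^1,\nu^1)^2 \diff\mathbb{P}\right) < 0.$$
This contradicts the assumption that $\mu$ is a barycenter of $\mathbb{P}$. Along the way we must check that every quantity is finite and that $\mathbb{Q}^1 \in \was(\was(X))$. Finiteness follows from
$$d_W(\delta_{x_0},\nu^1)^2 \le \|g_1\|_\infty \int \bigl(d^2(x_0,x) + d^2(x_0,y)\bigr)\diff \pi_\mu^\nu,$$
which is controlled by the second moments of $\mu$ and $\nu$. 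Properness of $X$ ensures that barycenters exist; it is not used otherwise.

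The main obstacle is the measurability bookkeeping. The integrands $\nu \mapsto d_W(\mu^1,\mathcal{R}^1\nu)^2$ and $\nu\mapsto \int d^2 \diff(g_2\pi_\mu^\nu)$ must be measurable. We would obtain this from the measurability of $\mathcal{R}^i$ (the preceding lemma), the measurability of $\nu\mapsto g_2\pi_\mu^\nu$ established in its proof, and the continuity of $d_W$. Note that the pointwise inequality in the first step does not require the plan $\sigma_\nu$ itself to be chosen measurably; only the integrands above need to be measurable, and these are.
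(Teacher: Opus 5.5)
Your proof is correct. It rests on the same two ingredients as the paper's proof:
\begin{itemize}
\item the splitting $d_W(\mu,\nu)^2=\lambda\, d_W(\mu^1,\nu^1)^2+(1-\lambda)\,d_W(\mu^2,\nu^2)^2$, which comes from optimality of the restricted plans $g_i\pi_\mu^\nu$;
\item gluing plans along $\nu=\lambda\nu^1+(1-\lambda)\nu^2$.
\end{itemize}
Where you differ is the choice of competitor.

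The paper first bounds $\int d_W(\mu,\nu)^2\diff\mathbb{P}$ from below by $\lambda$ times the minimal cost for $\mathbb{Q}^1$ plus $(1-\lambda)$ times the minimal cost for $\mathbb{Q}^2$. It then glues \emph{barycenters} of both pushed-forward measures into $\bar\mu=\lambda\mu_{\mathbb{Q}^1}+(1-\lambda)\mu_{\mathbb{Q}^2}$. This shows the lower bound is an equality, and the paper (implicitly) concludes that each $\mu^i$ attains its own minimum.

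You instead perturb only one piece: you replace $\mu^1$ by an arbitrary competitor $\eta$ and keep $\mu^2$ fixed. This buys two things:
\begin{itemize}
\item You never need barycenters of $\mathbb{Q}^1$ or $\mathbb{Q}^2$ to exist. You work with the infimum and show directly that $\mu^1$ attains it. Properness of $X$ therefore plays no role in your argument; your remark that it ``ensures barycenters exist'' can simply be dropped. The conclusion holds on any Polish space where the measurable selection and the measurability of $\mathcal{R}^i$ are available.
\item You treat each $i$ directly, rather than splitting one equality of sums into two.
\end{itemize}

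The paper's symmetric version gives, in exchange, the additivity of the minimal costs. It also shows that recombining arbitrary barycenters of $\mathbb{Q}^1$ and $\mathbb{Q}^2$ yields a barycenter of $\mathbb{P}$. The one-sided form of this recombination also follows from your inequality, with $\le 0$ in place of $<0$.

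Your treatment of finiteness and measurability is adequate. In particular, $\int d^2\diff(g_2\pi_\mu^\nu)=d_W(\mu^2,\mathcal{R}^2\nu)^2$ is measurable in $\nu$ by composition, and no measurable choice of $\sigma_\nu$ is needed.
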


\begin{proof}
	
	Assume that $\mu = \mu_\mathbb{P}$ is a barycenter of $\mathbb{P}$ and observe that
	\begin{align*}
		    % & \min_{\eta \in \mathcal{W}_2(X)}
		  &\int_{\mathcal{W}_2(X)} d_W(\mu_\mathbb{P}, \nu)^2 \diff \mathbb{P}(\nu)
		= \int_{\mathcal{W}_2(X)} \int_{X \times X}
		d(x, y)^2 \diff \pi_{\mu}^{\nu}(x, y) \diff \mathbb{P}(\nu)                 \\
		=   & \int_{\mathcal{W}_2(X)} \int_{X \times X} d(x, y)^2
		\left[ \lambda \diff \pi_{\mu^1}^{\nu^1}(x, y) + (1- \lambda) \diff \pi_{\mu^2}^{\nu^2}(x, y) \right]
		\diff \mathbb{P}(\nu)                                               \\
		=   & \int_{\mathcal{W}_2(X)} \left[\lambda \,d_W(\mu^1, \nu^1)^2 +
		(1 - \lambda) d_W(\mu^2, \nu^2)^2\right] \diff \mathbb{P}(\nu)      \\
		\ge & \lambda \min_{\eta \in \mathcal{W}_2(X)}
		\int_{\mathcal{W}_2(X)} d_W(\eta, \nu)^2 \diff \mathbb{Q}^1(\nu) +
		(1- \lambda) \min_{\eta \in \mathcal{W}_2(X)}
		\int_{\mathcal{W}_2(X)} d_W(\eta, \nu)^2 \diff \mathbb{Q}^2(\nu),
	\end{align*}
	where we used the fact that $g_i \pi_{\mu}^{\nu}$ is an optimal transport plan
	between $\mu^i$ and $\nu^i$.
	It follows that $\mathbb{Q}^i \in \mathcal{W}_2(\mathcal{W}_2(X))$ for $i = 1, 2$.
	We claim that the last inequality above must be an equality.
	Consider the measure $\widebar \mu :=
	\lambda \, \mu_{\mathbb{Q}^1} + (1- \lambda) \mu_{\mathbb{Q}^2}$ 
	with $\mu_{\mathbb{Q}^i}$ being a Wasserstein barycenter of $\mathbb{Q}^i$.
	According to the decomposition $\nu = \lambda \, \nu^1 + (1 - \lambda) \nu^2$,
	if $\theta_i$ ($i = 1, 2$) is a transport plan between $\mu_{\mathbb{Q}^i}$ and $\nu^i$,
	then $\lambda\, \theta_1 + (1 - \lambda) \theta_2$
	is a transport plan between $\widebar \mu$ and $\nu$.
	Hence, it follows from the definitions of $\mu_{\mathbb{Q}^i}$,
	$\mathbb{Q}^i$, $\nu^i$, and $d_W(\widebar \mu, \nu)^2$ that
	\begin{align*}
		    & \lambda \min_{\eta \in \mathcal{W}_2(X)}
		\int_{\mathcal{W}_2(X)} d_W(\eta, \nu)^2 \diff \mathbb{Q}^1(\nu) +
		(1- \lambda) \min_{\eta \in \mathcal{W}_2(X)}
		\int_{\mathcal{W}_2(X)} d_W(\eta, \nu)^2 \diff \mathbb{Q}^2(\nu)                               \\
		=   & \lambda \,\int_{\mathcal{W}_2(X)} d_W(\mu_{\mathbb{Q}^1}, \nu)^2 \diff \mathbb{Q}^1(\nu) +
		(1- \lambda) \int_{\mathcal{W}_2(X)} d_W(\mu_{\mathbb{Q}^2}, \nu)^2 \diff \mathbb{Q}^2(\nu)      \\
		=   & \int_{\mathcal{W}_2(X)} \left[\lambda \,d_W(\mu_{\mathbb{Q}^1}, \nu^1)^2 +
		(1 - \lambda) d_W(\mu_{\mathbb{Q}^2}, \nu^2)^2\right] \diff \mathbb{P}(\nu)                      \\
		\ge & \int _{\mathcal{W}_2(X)} d_W(\widebar \mu, \nu)^2 \diff \mathbb{P}(\nu).
	\end{align*}
	Hence, if our claim were false,
	the expression $\int_{\mathcal{W}_2(X)} d_W(\cdot, \nu)^2 \diff \mathbb{P}(\nu)$
	would admit a strictly smaller value on
	the measure $\widebar \mu$ than on the barycenter measure $\mu$. The proof is complete.
\end{proof}

A key property used frequently in conjunction with 
\Cref{prop-restri-barycenters} is that
the map $\mathcal{R}^i$ sends a probability measure $\nu$ to the measure $\nu^i$ that is absolutely
continuous with respect to $\nu$.
The following corollary presents two direct consequences of this property.

\begin{coro}
	\label{coro:absolute_continuity_and_restrictions_of_barycenters}
	Let $(X, d)$ be a proper metric space and $\mathbb{P} \in \mathcal{W}_2(\mathcal{W}_2(X))$
	with barycenter $\mu_\mathbb{P} \in \mathcal{W}_2(X)$. Assume the existence of a decomposition as in \eqref{eq-decompo-mu}:
	$$\mu_\mathbb{P} = \lambda \, \mu^1 + (1 - \lambda) \mu^2$$
	with $\mu^i \in \mathcal{W}_2(X)$ for $i = 1, 2$ and $\lambda \in (0, 1)$. Keeping the notation in Proposition \ref{prop-restri-barycenters}, for any measure $\eta$ on $X$,
	the following properties hold
	\begin{enumerate}
		\item if the measure $\mathbb{P}$ gives mass to the set of probability measures
			that are absolutely continuous with respect to $\eta$,
			then so do the measures $\mathbb{Q}^1$ and $\mathbb{Q}^2$;
		\item if the measure $\mathbb{P}$ assigns full mass to the set of probability measures
			that are absolutely continuous with respect to $\eta$,
			then so do the measures $\mathbb{Q}^1$ and $\mathbb{Q}^2$.
	\end{enumerate}
\end{coro}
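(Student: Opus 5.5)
Both items follow from a single observation: for every $\nu \in \was(X)$ and each $i\in\{1,2\}$, the restricted measure $\nu^i=\mathcal{R}^i(\nu)$ is absolutely continuous with respect to $\nu$. The whole corollary then comes down to transporting the relevant subsets of $\was(X)$ through $\mathcal{R}^i$.

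\textbf{Step 1: $\nu^i \ll \nu$.} By \Cref{def-restrict-map}, $\nu^i={p_2}_{\sharp}(g_i\pi_{\mu}^{\nu})$, where $g_i$ is bounded and nonnegative; write $C_i:=\|g_i\|_{L^\infty(\mu)}$. For a Borel set $B\subset X$,
$$\nu^i(B)=\int_{X\times B} g_i(x)\diff\pi_{\mu}^{\nu}(x,y)\le C_i\,\pi_{\mu}^{\nu}(X\times B)=C_i\,\nu(B).$$
So $\nu^i\le C_i\nu$, and in particular $\nu^i\ll\nu$. Absolute continuity is transitive, so whenever $\nu\ll\eta$ we also get $\nu^i\ll\eta$. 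We also need $\nu^i$ to be a probability measure. Its total mass is $\int g_i\diff\mu=1$, because $\mu^i=g_i\mu$ is a probability measure, and its second moment is finite because $\nu^i\le C_i\nu$.

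\textbf{Step 2: transfer through the pushforward.} Let $A_\eta:=\{\nu\in\was(X);\,\nu\ll\eta\}$. Step 1 shows that $A_\eta\subset(\mathcal{R}^i)^{-1}(A_\eta)$. Therefore
$$\mathbb{Q}^i(A_\eta)=\mathbb{P}\big((\mathcal{R}^i)^{-1}(A_\eta)\big)\ge\mathbb{P}(A_\eta).$$
If $\mathbb{P}(A_\eta)>0$, then $\mathbb{Q}^i(A_\eta)>0$, which is item 1. If $\mathbb{P}(A_\eta)=1$, then $\mathbb{Q}^i(A_\eta)=1$, which is item 2.

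\textbf{Main obstacle: measurability of $A_\eta$.} For $\mathbb{P}(A_\eta)$ and $\mathbb{Q}^i(A_\eta)$ to make sense, $A_\eta$ has to be Borel (or at least universally measurable) in $\was(X)$. The measurability of $\mathcal{R}^i$ itself was proved in the preceding lemma.

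For measurability of $A_\eta$, I would first reduce to the case where $\eta$ is $\sigma$-finite, for instance the Hausdorff measure $\mathcal{H}$ on a graph, and then normalise it to a finite measure. Since $X$ is Polish, its Borel $\sigma$-algebra is countably generated, say by a countable algebra $\mathcal{A}$. A measure $\nu$ is absolutely continuous with respect to a finite $\eta$ exactly when
$$\forall \varepsilon\in\mathbb{Q}_{>0}\;\exists \delta\in\mathbb{Q}_{>0}\;\forall B\in\mathcal{A}:\quad \eta(B)<\delta\implies\nu(B)\le\varepsilon.$$
Each map $\nu\mapsto\nu(B)$ is Borel on $\was(X)$. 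Hence this condition is a countable combination of Borel conditions, and $A_\eta$ is Borel.

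For a general $\eta$, if this reduction is unavailable, I would fall back on the following argument. The assumptions of the statement already implicitly require $A_\eta$ to be measurable, so both $\mathbb{P}(A_\eta)$ and $\mathbb{Q}^i(A_\eta)$ are defined. Step 2 only uses the inclusion $A_\eta\subset(\mathcal{R}^i)^{-1}(A_\eta)$ together with the measurability of $\mathcal{R}^i$.
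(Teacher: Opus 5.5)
Your proposal is correct and follows the paper's own argument. The paper likewise observes that $\mathcal{R}^i(\nu)\ll\nu$, so $\nu\ll\eta$ implies $\mathcal{R}^i(\nu)\ll\eta$, and then concludes by pushing forward, $\mathbb{Q}^i={\mathcal{R}^i}_{\#}\mathbb{P}$. Three of your points are details the paper leaves implicit: the explicit domination $\nu^i\le \|g_i\|_{\infty}\,\nu$, the Borel measurability of $\{\nu;\ \nu\ll\eta\}$ for $\sigma$-finite $\eta$, and the fact that only measurability of $\mathcal{R}^i$ is needed (the paper's proof calls these maps continuous, but only measurability was established).
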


\begin{proof}
	\Cref{prop-restri-barycenters} provides us with
	two continuous maps $\mathcal{R}^1, \mathcal{R}^2: \mathcal{W}_2(X) \rightarrow \mathcal{W}_2(X)$
	such that 
	$$\lambda\,\mathcal{R}^1(\nu) + (1 - \lambda) \mathcal{R}^2(\nu) = \nu$$
	 for $\nu \in \mathcal{W}_2(X)$ and $\mu^i$ ($i = 1, 2$) is a barycenter
	of  $\mathbb{Q}^i: = {\mathcal{R}^i}_{\#} \mathbb{P}$.
	Observe that the probability measures $\mathcal{R}^1(\nu), \mathcal{R}^2(\nu)$
	are absolutely continuous with respect to $\nu$.
	Hence, given a measure $\eta$ on $X$,
	if $\nu$ is absolutely continuous with respect to it,
	then so are the measures $\mathcal{R}^1(\nu)$ and $\mathcal{R}^2(\nu)$.
	The two assertions in the corollary then follow
	directly. 
\end{proof}

\subsection{Subsets of geodesics in a metric graph and induced properties}

In this part, we fix  $\de$ a \emph{minimising} oriented edge of $(G,d)$. The result we obtain here will be used in the next part.
\smallskip

 We decompose the space of geodesics in $(G,d)$ starting at a point lying in $e$ into three pieces. This decomposition of the space of geodesics depends on the orientation of $e$. We set

\begin{equation}
\geoE  = \geo (\de) \sqcup \geo(G)^{\de,+} \sqcup  \geo(G)^{\de, -},
\end{equation}

where

\begin{itemize}

\item $\geoE =\{\gamma \in \geo (G); \gamma(0) \in e\}$

\item $ \geo (\de)=\{\gamma \in \geoE; \gamma([0,1]) \subset \de\}$

\item $\geo(G)^{\de,+} = \{\gamma \in \geoE  \setminus  \geo (\de); \de_1 \in \ima (\gamma) \}$

\item $\geo(G)^{\de,-} = \{\gamma \in \geoE  \setminus  \geo (\de); \de_0 \in \ima (\gamma) \}$
\end{itemize}

Note that since $\de$ is minimising, the  set  $\geo(G)^{\de,+} \cap \geo(G)^{\de,-}$ has to be empty.

\begin{lem} The sets $\geo (\de),\geo(G)^{\de,+}, \geo(G)^{\de,-}$ are Borel subsets of $\geo(G)$ equipped with the topology of uniform convergence.

\end{lem}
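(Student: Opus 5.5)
The plan is to write each of the three sets as a preimage or a countable Boolean combination of sets that are plainly closed (or $F_\sigma$) in $(\geo(G), d_\infty)$. We work inside $\geoE$. This set is the preimage of the closed set $e=[0,l(e)]$, viewed as a subset of $G$, under the evaluation map $\gamma\mapsto\gamma(0)$. That map is $1$-Lipschitz for $d_\infty$, so $\geoE$ is closed in $\geo(G)$. Everything below will be shown Borel relative to $\geoE$, and hence Borel in $\geo(G)$.

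For $\geo(\de)$, we use the identity
$$\geo(\de)=\{\gamma\in\geoE;\ \gamma(t)\in e \text{ for all } t\in[0,1]\cap\mathbb{Q}\}.$$
The inclusion $\supset$ holds because $\gamma$ is continuous and $e$ is closed in $G$. Each condition $\gamma(t)\in e$ defines a closed set, again by continuity of evaluation at time $t$. So $\geo(\de)$ is a countable intersection of closed sets, hence closed.

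For $\geo(G)^{\de,+}$, consider
$$A^+:=\{\gamma\in\geo(G);\ \de_1\in\ima(\gamma)\}=\{\gamma;\ \min_{t\in[0,1]} d(\gamma(t),\de_1)=0\}.$$
The map $\gamma\mapsto\min_t d(\gamma(t),\de_1)$ is $1$-Lipschitz for $d_\infty$, so $A^+$ is closed. We then have $\geo(G)^{\de,+}=(\geoE\cap A^+)\setminus\geo(\de)$, a difference of Borel sets, hence Borel; in fact it is locally closed. The set $\geo(G)^{\de,-}$ is treated in exactly the same way with $\de_0$ in place of $\de_1$.

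Finally, we should check that these three sets really do exhaust $\geoE$. Otherwise the decomposition displayed before the lemma would not be a partition, although Borel measurability of each piece holds regardless. Take $\gamma\in\geoE\setminus\geo(\de)$. Then $\gamma$ starts in $e$ and leaves $e$. Since the topology is the quotient topology, the only boundary points of $e$ in $G$ are $\de_0$ and $\de_1$, so $\gamma$ must pass through one of them. This is the only step that uses the graph structure, and it is routine. The disjointness of the $+$ and $-$ parts is the remark already made in the text, and it follows from minimality of $\de$. No genuine obstacle is expected; the main care needed is the closedness of $e$ and of $A^{\pm}$, which relies on the continuity of evaluation maps in the uniform topology.
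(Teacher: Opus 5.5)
Your proof is correct and is essentially the paper's argument. The paper's condition $\max_{t\in[0,1]} d(e,\gamma(t))>0$ is exactly $\gamma\notin\geo(\de)$, which the paper rewrites as a countable union of closed sets $\mathcal{G}_n$, while you take the difference $(\geoE\cap A^+)\setminus\geo(\de)$ of two closed sets directly, relying on the same continuity of $\gamma\mapsto\min_t d(\gamma(t),\de_1)$. One caveat on your side remark, which is not needed for the lemma: disjointness of the $+$ and $-$ parts can fail for geodesics starting at an endpoint of $e$ (for instance, a geodesic from $\de_1$ that runs through $\de_0$ into an adjacent edge, as on a path graph, lies in both), so minimality of $\de$ only yields disjointness when $\gamma(0)$ lies in the interior of $e$.
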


\begin{proof}
Recall that $\geo(G) \subset Lip([0,1], G)$. It is clear that $ \geo (\de)$ is a closed subset of $\geo(G)$, hence measurable,  since $\de$ is closed. Let us now treat the case of 
$\geo(G)^{\de,+}$, the case $\geo(G)^{\de,-}$ can be proved by similar arguments. Observe that
$$\geo(G)^{\de,+} =\{ \gamma \in \geoE; \min_{t \in [0,1]} d(\de_1,\gamma(t))=0 \mbox{ and } \max_{t \in [0,1]} d(e,\gamma(t)) >0\}$$

where we use the fact that $d(\de_0,\de_1) = l(e)$ to get this equality. Consequently

$$ \geo(G)^{\de,+} = \cup_{n \in \mathbb{N}\setminus\{0\}} \mathcal{G}_n,$$

where $ \mathcal{G}_n = \{\gamma \in \geoE; \min_{t \in [0,1]} d(\de_1,\gamma(t))=0 \mbox{ and } \max_{t \in [0,1]} d(e,\gamma(t)) \geq 1/n\}$. Using the compactness of $[0,1]$, one easily obtain that each set $\mathcal{G}_n$ is a closed subset of $\geo(G)$ hence $\geo(G)^{\de,+}$ is measurable.

\end{proof}

As a consequence of the existence of this measurable partition of $\geoE$, we get a decomposition of any measure $\Pi$ in $\P(\geo(G))$ such that $\gamma(0) \in \de$ $\Pi$-almost surely. We simply consider the restriction of $\Pi$ to each of the three subsets $\geo (\de),\geo(G)^{\de,+},$ and $ \geo(G)^{\de,-}$ above.

\begin{defn}[Decomposition of a dynamical plan]\label{def-decompo-dyn-plan}
Let $\Pi \in \P (\geo(G))$ and $\de \in \oEdge$. Assume that $\gamma(0) \in \de$ for $\Pi$-a.e $\gamma$. Then, $\Pi$ has a unique decomposition in $\P(\geo (G))$.

\begin{equation}\label{eq-decompo-dyn-plan}
\Pi  = \Pi^{\de}  + \Pi^+  + \Pi^{-}.
\end{equation} 

This decomposition depends on $\de$.
\end{defn}

We now introduced a collection of three Lipschitz maps, each of which will be used in conjonction with one of the above subsets of $\geo(G)$.

\begin{defn}\label{def-theMapHs}

We define the following maps

$$ \begin{array}{rcl}
  \bullet \hspace{0.2cm } h^{\de} : G &\longrightarrow &\R \\
             x & \longmapsto  & d(\de_0, x) 
\vspace*{0.2cm} \\

  \bullet \hspace{0.2cm } h^{+} : G &\longrightarrow &\R \\
             y & \longmapsto  & l(e) + d(\de_1, y)  \vspace*{0.2cm} \\

 \bullet \hspace{0.2cm }  h^{-} : G &\longrightarrow &\R \\
             y & \longmapsto  & -d(\de_0, y)  
\end{array}$$

\end{defn} 

These maps are $1$-Lipschitz maps hence Borel measurable.

\smallskip 

We conclude this part with the definition of a measurable map from $\was(G)$ to $\was(\R)$ whose properties will be proved in the next section.

\begin{defn}\label{def-theMapPhi} For a given $\bar{\mu} \in \was(e)$, and based on  the decomposition $\Pi  = \Pi^{\de}  + \Pi^+  + \Pi^{-}$ from \eqref{eq-decompo-dyn-plan}, we define a map
$$ \Phi : \was(G)  \longrightarrow  \was(\R)$$

by the formula

\begin{equation}\label{eq-Phi-defn}
\Phi(\nu):= h^{\de}_{\sharp} \nu^{\de} + h^{+}_{\sharp} \nu^{+} + h^{-}_{\sharp} \nu^{-},
\end{equation}
where for $i \in \{\de, +,-\}$, $\nu^i = {p_2}_{\sharp} (\Pi_{\bar{\mu}}^{\nu})^i$.
\end{defn}

\begin{rmk}\label{rmk-Rmk-on-Phi}
Note the alternative characterisation of the three terms in the definition of $\Phi(\nu)$.

$$ \begin{array}{rcl}
\Phi(\nu)\res_{[0,l(e)]} & = &h^{\de}_{\sharp} \nu^{\de} \\
\Phi(\nu)\res_{(l(e),\infty)} & = & h^{+}_{\sharp} \nu^{+}  \\
\Phi(\nu)\res_{(-\infty,0)} & = & h^{-}_{\sharp} \nu^{-}
\end{array}
$$

Besides, with our convention, for $\de \sim [0,l(e)]$, note that $h^{\de}(x)= d(\de_0,x)=x$ whenever $x \in\de$. Therefore, given any $\mu \in \was(e)$, $h^{\de}_{\sharp} \mu = \mu$ and the map $\Phi$ (restricted to $\was(e)$) is \emph{surjective onto} $\was([0,l(e)])$.
\end{rmk}

\section{Branched covering {\&} quasi regularity}

In this part we study the properties of the map $\Phi$. We call $\Phi$ a ``branched covering" because of \Cref{lem-const-preimage} stating that the cardinals of the preimages of the maps involved in the definition of $\Phi$ are locally constant.

Let us recall that the edge $\de$ involved in the definition of $\Phi$ is assumed to be minimising.

\subsection{A branched covering map from $\was(G)$ to $\was(\R)$ }

The goal of this part is to prove 

\begin{thm}\label{thm-prop-branched-cover}
The map $\Phi : \was(G)  \longrightarrow  \was(\R)$ satisfies the following properties:

\begin{enumerate}

\item The map $\Phi$ preserves the regularity with respect to the reference measure:
$$ \nu \ll \H \Leftrightarrow \Phi(\nu) \ll \Leb^1,$$
where $\Leb^1$ is the Lebesgue measure on $\R$.
\item $\forall \mu \in \was(e), \, \forall \nu \in \was(G)$,
$$d_W (\Phi(\mu), \Phi(\nu)) \geq d_W(\mu,\nu),$$

\item When $\mu=\bar{\mu}$ (where $\bar{\mu}$ is as in \Cref{def-theMapPhi}), equality holds:
$$ d_W (\Phi(\bar{\mu}), \Phi(\nu)) = d_W(\bar{\mu},\nu).$$
\end{enumerate}

\end{thm}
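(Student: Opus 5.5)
We treat the three items separately. Throughout, fix $\nu \in \was(G)$ and write $\Pi=\Pi_{\bar\mu}^{\nu}=\Pi^{\de}+\Pi^++\Pi^-$ as in \eqref{eq-decompo-dyn-plan}, with $\nu^i={p_2}_\sharp \Pi^i$, so that $\nu=\nu^{\de}+\nu^++\nu^-$ and each $\nu^i\le \nu$. Items 2 and 3 rest on one pointwise inequality, which we establish first: for $x\in e$ (identified with $[0,l(e)]$),
\begin{equation*}
d(x,y)\le |x-h^i(y)| \quad\text{for } y\in \operatorname{supp}\nu^i .
\end{equation*}
\begin{itemize}
\item For $i=+$, the triangle inequality through $\de_1$ gives $d(x,y)\le (l(e)-x)+d(\de_1,y)=h^+(y)-x\ge 0$.
\item For $i=-$, going through $\de_0$ gives $d(x,y)\le x+d(\de_0,y)=x-h^-(y)$.
\item For $i=\de$, we have $y\in e$ because the geodesics in $\geo(\de)$ stay in $e$. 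Since $e$ is minimising, $h^{\de}(y)=y$, and \Cref{lem:edge_local_isometry} gives $d(x,y)=|x-y|$.
\end{itemize}
Moreover, for $\Pi$-a.e. $\gamma$ with $x=\gamma(0)$, $y=\gamma(1)$, equality holds. A geodesic in $\geo(G)^{\de,+}$ passes through $\de_1$, and it does so directly, since $e$ is minimising and $\gamma$ is not contained in $e$. Hence $d(x,y)=d(x,\de_1)+d(\de_1,y)=l(e)-x+d(\de_1,y)$, and similarly for $\geo(G)^{\de,-}$.

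\emph{Item 2.} For $\mu\in\was(e)$, every optimal dynamical plan from $\bar\mu$ to $\mu$ is carried by $\geo(\de)$, because $e$ is minimising. Therefore $\Phi(\mu)=h^{\de}_\sharp\mu=\mu$ by \Cref{rmk-Rmk-on-Phi}. Now consider the coupling $\rho:=\sum_i (\mathrm{id}\times h^i)_\sharp \nu^i\in\Gamma(\nu,\Phi(\nu))$, and let $\sigma$ be an optimal plan on $\R$ between $\Phi(\mu)=\mu$ and $\Phi(\nu)$. We glue $\sigma$ and $\rho$ along their common marginal $\Phi(\nu)$, which is possible since $\rho$ disintegrates with respect to $\Phi(\nu)$ into measures on pairs $(y,i)$. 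Keep track of the label $i$ by working on $G\times\{\de,+,-\}$; this is harmless because the three pieces are Borel. The result is a plan between $\mu$ and $\nu$ supported on triples $(x,t,y)$ with $t=h^i(y)$ and $y\in\operatorname{supp}\nu^i$. By the inequality above, its cost is at most $\int|x-t|^2\diff\sigma=d_W(\Phi(\mu),\Phi(\nu))^2$.

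\emph{Item 3.} With $\mu=\bar\mu$, use $\Pi$ itself: $\bigl(\gamma\mapsto(\gamma(0),h^{i}(\gamma(1)))\bigr)_\sharp\Pi^i$ summed over $i$ is a coupling of $\bar\mu=\Phi(\bar\mu)$ and $\Phi(\nu)$. By the equality case, its cost equals $\int s^2\diff\Pi=d_W(\bar\mu,\nu)^2$, which is \eqref{eq-dyn-transp-pro}. Combined with item 2, this gives equality.

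\emph{Item 1.} Since $\nu=\sum_i\nu^i$ with $\nu^i\le\nu$, we have $\nu\ll\H$ if and only if each $\nu^i\ll\H$. The pieces $h^i_\sharp\nu^i$ live on the disjoint sets $[0,l(e)]$, $(l(e),\infty)$, $(-\infty,0)$, up to the atoms at $0$ and $l(e)$, which are handled by the same argument. Hence $\Phi(\nu)\ll\Leb^1$ if and only if each $h^i_\sharp\nu^i\ll\Leb^1$.

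For the direction ($\Leftarrow$), let $N\subset G$ be $\H$-null. Since $h^i$ is $1$-Lipschitz, $h^i(N)$ is $\Leb^1$-null, so $\nu^i(N)\le h^i_\sharp\nu^i(h^i(N))=0$. If measurability of $h^i(N)$ is an issue, we first enlarge $N$ to a Borel null set.

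For the direction ($\Rightarrow$), we need that $(h^i)^{-1}(A)$ is $\H$-null whenever $A$ is $\Leb^1$-null. This is where the graph structure enters, and it is the step we expect to be the main obstacle. Fix a vertex $v\in\{\de_0,\de_1\}$ and an edge $f$. The function $d(v,\cdot)$ restricted to $f$ is the minimum of the two affine functions $t\mapsto d(v,f_0)+t$ and $t\mapsto d(v,f_1)+l(f)-t$. So it is piecewise affine with slopes $\pm1$ and at most one breakpoint, which is consistent with \Cref{lem:countability-cut}. On each of the at most two pieces it is an isometry onto an interval, so the preimage of a null set in $f$ is null. Summing over the countably many edges concludes. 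The care needed lies in justifying that formula for $d(v,\cdot)$ on $f$ for possibly infinite graphs; here local finiteness and \eqref{eq-length-lb} guarantee that the relevant infima are attained.
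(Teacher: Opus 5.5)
Your proposal is correct. Item 3 and the Lipschitz direction of item 1 are exactly the paper's arguments; item 2 and the forward direction of item 1 take a different, more elementary route.

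\textbf{Item 2.} The paper fixes $\tilde\theta\in\Gamma^o(\Phi(\mu),\Phi(\nu))$ and lifts it by hand. It partitions $\R$ using the discrete sets $D^i$ of \Cref{lem-const-preimage} and lifts each piece $\Xi_{a,b}$ through the local bi-Lipschitz inverses of $h^i$ via \Cref{lem-disinte-branc-cov}. Atoms at points of $D$ are treated separately. Only then does it check the same pointwise inequality $|h^{\de}(x)-h^{+}(y)|\ge d(x,y)$ that you isolate. You instead glue $\sigma$ with the explicit coupling $\rho=\sum_i(\mathrm{id}\times h^i)_\sharp\nu^i\in\Gamma(\nu,\Phi(\nu))$. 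This needs no invertibility of $h^i$, no cut-locus analysis and no atom bookkeeping, only the gluing lemma and the pointwise inequality. You do not even need the label variable: the inequality holds at every point of the three graphs $\{(y,h^i(y))\}$, with $y\in e$ for $i=\de$.

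\textbf{Item 1.} For the forward implication the paper relies on \Cref{lem-const-preimage}, i.e. discreteness of $\overrightarrow{Cut}_v(G)$ plus a local inverse argument. You read the same piecewise-isometric structure directly off the formula $d(v,\cdot)|_f=\min\bigl(d(v,f_0)+t,\,d(v,f_1)+l(f)-t\bigr)$. That formula follows straight from the definition of path lengths, since every path from a vertex to an interior point of $f$ ends with a subsegment of $f$ issued from $f_0$ or $f_1$. So no attainment of infima is needed.

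\textbf{Comparison.} The paper's route makes the branched-covering structure explicit. Yours is shorter and more self-contained.

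\textbf{Shared caveat.} Your claim that every optimal dynamical plan between measures on $e$ is carried by $\geo(\de)$ is the same tacit assumption as in \Cref{rmk-Rmk-on-Phi}. Strictly speaking it fails if a second geodesic joins $\de_0$ to $\de_1$. This is the same degenerate situation in which $\geo(G)^{\de,+}$ and $\geo(G)^{\de,-}$ intersect, which the paper also rules out without justification. So you share that caveat with the paper rather than introducing a new gap.
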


Before proving this theorem, we need to establish some intermediate results.

\begin{lem}\label{lem-const-preimage} For $i \in \{\de, +,-\}$, there exists a discrete set $D^i$ such that the cardinal of $(h^i)^{-1}(\tilde{x})$ is constant on each connected component of $\R\setminus D^i$.

\end{lem}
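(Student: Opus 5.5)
The three maps are, up to an affine change of variable, distance functions to a vertex: $h^{\de}=d(\de_0,\cdot)$, $h^+=l(e)+d(\de_1,\cdot)$ and $h^-=-d(\de_0,\cdot)$. It therefore suffices to prove the following claim. For a fixed vertex $v$, the function $r_v:=d(v,\cdot):G\to[0,\infty)$ admits a discrete set $D_v\subset[0,\infty)$ such that $s\mapsto \#r_v^{-1}(s)$ is constant on each connected component of $[0,\infty)\setminus D_v$. We then set $D^{\de}=D_{\de_0}$, $D^+=l(e)+D_{\de_1}$ and $D^-=-D_{\de_0}$. We include the point $0$ in each set, and also $l(e)$ for $h^+$, because the ranges of $h^{\pm}$ are half-lines. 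On the complement of the range the cardinal is $0$, hence constant.

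To prove the claim, we analyse $r_v$ edge by edge. Fix an edge $f$ with endpoints $a,b$, identified with $[0,l(f)]$ with $a$ at $0$. For $t\in[0,l(f)]$, a shortest path from $v$ to $t$ enters $f$ through $a$ or through $b$, so
$$ r_v(t)=\min\bigl(d(v,a)+t,\; d(v,b)+l(f)-t\bigr). $$
The two affine pieces have slopes $+1$ and $-1$. Hence $r_v|_f$ is a ``tent'' function. It increases with slope $1$ up to the unique crossing point $t_f$ and decreases with slope $1$ afterwards, and one of these two monotone pieces may be degenerate.

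This crossing point is exactly where two geodesics leave $f$ through different endpoints. It is the single point of $\overrightarrow{Cut}_v(G)\cap\ide[f]$ used in \Cref{lem:countability-cut}, when it lies in the interior. So on $f$, each value $s$ has at most two preimages: one on the increasing branch and one on the decreasing branch. The value $s$ has a preimage on the increasing branch exactly when $s\in[d(v,a),r_v(t_f)]$, and similarly on the decreasing branch. Consequently, $s\mapsto\#(r_v^{-1}(s)\cap f)$ is locally constant away from the finite set
$$ D_f:=\{d(v,a),\,d(v,b),\,r_v(t_f)\}. $$
We set $D_v:=\bigcup_f D_f$. Endpoints are shared between edges, so we count vertices once: we count preimages among vertices separately and preimages in the open edges separately. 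The vertex values $d(v,w)$ already lie in $D_v$, so the count of vertex preimages is locally constant too.

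The remaining task, and the main obstacle, is to show that $D_v$ is discrete and that the total count is finite. For this we use local finiteness together with $c>0$ from \eqref{eq-length-lb}. Fix $R>0$. By the proof of \Cref{thm:metric_properties_metric_graphs}, $\widebar B(v,R)$ meets only finitely many edges. Every edge $f$ with $D_f\cap[0,R]\neq\emptyset$ has a point at distance at most $R$ from $v$, so it meets this ball. Hence $D_v\cap[0,R]$ is finite, which gives discreteness. Moreover, for $s$ in a component of $[0,\infty)\setminus D_v$, the preimage $r_v^{-1}(s)$ lies in these finitely many edges. On the component, the count is then a finite sum of locally constant functions, and since components are intervals it is constant there. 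A subtle point is the possibility that $d(v,a)=d(v,b)+l(f)$, where the tent is degenerate and $r_v|_f$ is monotone. This case is already handled by the formula above, since the crossing point is then an endpoint and the count on $f$ is just $0$ or $1$ according to whether $s$ lies in the range.
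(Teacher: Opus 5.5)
Your proof is correct and follows essentially the paper's route. Your exceptional set $\bigcup_f\{d(v,a),d(v,b),r_v(t_f)\}$ is exactly the paper's $h^i(V\cup\overrightarrow{Cut}_{f_i})$, and local constancy comes from $h^i$ being locally affine of slope $\pm1$ near every preimage off this set. You derive this from the explicit tent formula $r_v(t)=\min(d(v,a)+t,\,d(v,b)+l(f)-t)$ and make the finiteness of preimages explicit via local finiteness, whereas the paper invokes differentiability of $d(f_i,\cdot)$ and the local inverse theorem and leaves that finiteness implicit.
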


\begin{proof}
Let us fix $i \in \{\de, +,-\}$ and recall that the interior of an edge $\de$ is unambiguously identified with the interval $]0,l(e)[$. Define $f_i$ to be $\de_0$ if $i \in \{ \de,-\}$ and $f_i=\de_1$ otherwise. Then, consider $D^i= h^i(V \cup \overrightarrow{ Cut}_{f_i})$. According to Lemma \ref{lem:countability-cut}, the set $V \cup \, \overrightarrow{ Cut}_{f_i}$ is a discrete set. We infer the same property for $D^i$ since  $ (h^i)^{-1}([-R,R]) \subset  \widebar{B} (f_i, R)$ which implies that $(h^i)^{-1}([-R,R]) \cap (V \cup \overrightarrow{ Cut}_{f_i})$ is finite. Now, for $x \in (h^i)^{-1}(\tilde{x})$ with $\tilde{x} \not\in D^i$, we claim that $h^i$ is differentiable at $x$ and $|\nabla h_i(x)|=1$. The differentiability of $h^i$ follows from the fact that $x \not\in V \cup \overrightarrow{ Cut}_{f_i}$ which implies that the distance function $d(f_i, \cdot)$ is differentiable at $x$, we get the result since $h^i$ coincides up to an additive constant and switching the sign with $d(f_i, \cdot)$. The local inverse theorem then gives us the fact that $\sharp ((h^i)^{-1}(\tilde{x})$ is locally constant and the proof is complete.
\end{proof}

\begin{rmk}
As a consequence of the proof, note that for any interval $I_x \subset G \setminus (V \cup \overrightarrow{ Cut}_{f_i})$ the inverse of $h^i\res_{I_x}$ is $1$-Lipschitz as well hence measurable.
\end{rmk}

We also need the following result which is a consequence of the  disintegration theorem as stated in \cite{ambrosio2008gradient, Santambrogio2015} for instance.

\begin{lem}\label{lem-disinte-branc-cov} Let $(G,d)$ be a metric graph and $h: G \longrightarrow \R$ be a Lipschitz map. Assume the existence of an (open) interval $E \subset \R$ and $S= \sqcup_{i=1}^k s_i \subset G$ such that for each $i \in \{1,\cdots, k\}$, $h|_{s_i} : s_i \longrightarrow E$ is a bi-Lipschitz homeomorphism onto $E$ and $h^{-1}(E) =S$. For $\nu \in \was(G)$, set $\tilde{\nu}:= h_{\sharp} \nu$. 

Then, $\tilde{\nu}\res_E= h_{\sharp} (\nu\res_S)$ and

\begin{equation}\label{eq-disinte-branc-cov} \nu\res_S = \int_E p_{\tilde y} \, \diff \tilde{\nu} (\tilde{y})
\end{equation}
where for $\tilde{\nu}\res_E$-a.e. $\tilde{y}$, $p_{\tilde y}$ is a  probability measure supported in the finite set $h^{-1}(\tilde{y})$.

Moreover, if $\tilde{\theta} \in \mathcal{M}(\R^2)$ is a Borel measure on $\R^2$ such that its second marginal satisfies $(p_2)_{\sharp} \tilde{\theta}\ll \tilde{\nu}\res_E$ then there exists a plan $\theta \in \was (\R \times G)$ such that 
$$ (Id,h)_{\sharp} \theta = \tilde{\theta}.$$

\end{lem}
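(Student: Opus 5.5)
The plan is to avoid the abstract disintegration theorem as much as possible and to build the kernel $p_{\tilde y}$ by hand from the local inverses of $h$. For each $i\in\{1,\dots,k\}$, let $g_i:=(h|_{s_i})^{-1}:E\to s_i$. By assumption $g_i$ is a bi-Lipschitz homeomorphism, hence Borel. Since the $s_i$ are disjoint, every $\tilde y\in E$ has exactly $k$ preimages $g_1(\tilde y),\dots,g_k(\tilde y)$, so $h^{-1}(\tilde y)$ is finite.

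\emph{Step 1 (the restriction identity).} Take a Borel set $A\subset\R$. Because $h^{-1}(E)=S$, we have $h^{-1}(A\cap E)=h^{-1}(A)\cap S$. Hence
$$\tilde\nu\res_E(A)=\nu\big(h^{-1}(A\cap E)\big)=\nu\res_S\big(h^{-1}(A)\big)=h_\sharp(\nu\res_S)(A),$$
which is exactly $\tilde{\nu}\res_E= h_{\sharp} (\nu\res_S)$.

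\emph{Step 2 (explicit disintegration).} Set $\nu_i:=h_\sharp(\nu\res_{s_i})$. Then $\sum_i\nu_i=\tilde\nu\res_E$ by Step 1, so each $\nu_i\le\tilde\nu\res_E$. By Radon--Nikodym there are Borel densities $\rho_i\ge 0$ with $\nu_i=\rho_i\,\tilde\nu\res_E$ and $\sum_i\rho_i=1$ $\tilde\nu\res_E$-a.e.; modifying them on a null set, we may assume this holds everywhere on $E$. Define
$$p_{\tilde y}:=\sum_{i=1}^k\rho_i(\tilde y)\,\delta_{g_i(\tilde y)}.$$
This is a probability measure supported in $h^{-1}(\tilde y)$, and $\tilde y\mapsto p_{\tilde y}(B)$ is Borel for every Borel $B\subset G$. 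To check \eqref{eq-disinte-branc-cov}, fix a Borel $B\subset G$. Since $h|_{s_i}$ is a bijection onto $E$,
$$\nu\res_{s_i}(B)=\nu_i\big(h(B\cap s_i)\big)=\int_E\mathbf 1_B(g_i(\tilde y))\,\rho_i(\tilde y)\diff\tilde\nu(\tilde y).$$
The set $h(B\cap s_i)=g_i^{-1}(B)$ is Borel. Summing over $i$ gives the disintegration formula.

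\emph{Step 3 (lifting plans).} Given $\tilde\theta$ with $(p_2)_\sharp\tilde\theta\ll\tilde\nu\res_E$, define $\theta$ on $\R\times G$ by
$$\theta(A\times B):=\int_{\R^2}\mathbf 1_A(x)\,p_{y}(B)\diff\tilde\theta(x,y),\qquad\text{equivalently}\qquad \theta=\sum_i (Id,g_i)_\sharp\big(\rho_i(y)\,\tilde\theta\big).$$
This makes sense because $\tilde\theta$ is concentrated on $\R\times E$ by the absolute continuity assumption. Since $h\circ g_i=Id_E$, we get
$$(Id,h)_\sharp\theta=\Big(\sum_i\rho_i(y)\Big)\tilde\theta .$$
The key point is that the set where $\sum_i\rho_i\neq 1$ is $\tilde\nu\res_E$-null. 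By $(p_2)_\sharp\tilde\theta\ll\tilde\nu\res_E$ it is therefore $\tilde\theta$-null as well, so $(Id,h)_\sharp\theta=\tilde\theta$.

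For the second moment, each $g_i$ is $L$-Lipschitz, so $d(x_0,g_i(y))\le C+L|y-y_0|$ for fixed base points $x_0\in G$, $y_0\in E$. Hence $\theta$ has finite second moment whenever $\tilde\theta$ does, which is the implicit setting, and $\theta$ has the same total mass as $\tilde\theta$. We also record that $(p_2)_\sharp\theta\le\nu\res_S$ whenever $(p_2)_\sharp\tilde\theta\le\tilde\nu\res_E$, since
$$(p_2)_\sharp\theta=\int p_y\diff (p_2)_\sharp\tilde\theta(y)\le \int p_y\diff\tilde\nu\res_E(y)=\nu\res_S$$
by Step 2.

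The main obstacle is the bookkeeping of null sets in Step 3. The densities $\rho_i$ are only defined $\tilde\nu$-a.e., and it is precisely the hypothesis $(p_2)_\sharp\tilde\theta\ll\tilde\nu\res_E$ that allows transferring the identity $\sum_i\rho_i=1$ to $\tilde\theta$-a.e. Measurability of $g_i$ and of $\rho_i$, which is needed for $\theta$ to be well defined, is guaranteed by the bi-Lipschitz assumption and Radon--Nikodym respectively.
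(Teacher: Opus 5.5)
Your proof is correct and follows the paper's argument: the paper also splits $\tilde\nu\res_E=\sum_i h_\sharp(\nu\res_{s_i})$ and sets $\theta=\sum_i (Id,(h|_{s_i})^{-1})_\sharp\tilde\theta^i$, the only difference being that it gets $p_{\tilde y}$ and the pieces $\tilde\theta^i=\int\tilde\theta_{\tilde y}f(\tilde y)\diff\tilde\nu^i(\tilde y)$ from the abstract disintegration theorem. Since $\tilde\nu^i=\rho_i\,\tilde\nu\res_E$, these pieces equal your $\rho_i(y)\,\tilde\theta$, so your explicit Radon--Nikodym construction gives the same $\theta$ and makes the measurability and null-set bookkeeping transparent.
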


\begin{proof}

The proof of $\tilde{\nu}\res_E= h_{\sharp} (\nu\res_S)$ follows from $h^{-1}(E) =S$. The formula \eqref{eq-disinte-branc-cov}, is then a straightforward application of the disintegration theorem. The fact that $p_{\tilde{y}}$ is concentrated on a finite set follows from the equality $S= \sqcup_{i=1}^k s_i$.

Let us now prove the second part of the lemma. Set $f$ the density function defined by
$$ (p_2)_{\sharp} \tilde{\theta}= f \, \tilde{\nu}\res_E .$$
Observe that 
\begin{equation}\label{eq-decompo-tilde-nu}
\tilde{\nu}\res_E = \sum_{i=1}^k \tilde{\nu}^i
\end{equation} 
where $\tilde{\nu}^i = h_{\sharp} (\nu\res_{s_i})$ and 
$$ (h|_{s_i})^{-1}_{\sharp}\tilde{\nu}^i =(h|_{s_i})^{-1}_{\sharp} (h_{\sharp} (\nu\res_{s_i})) = \nu\res_{s_i}.$$
Disintegrate $\tilde{\theta}$ with respect to its second marginal:
$$ \tilde{\theta} =\int  \tilde{\theta}_{\tilde{y}}\; f(\tilde{y}) \diff \tilde{\nu}\res_E(\tilde{y}).$$
Then decompose $\tilde{\theta}$ as follows
$$\tilde{\theta} := \sum_{i=1}^k \tilde{\theta}^i$$
where 
$$ \tilde{\theta}^i =\int  \tilde{\theta}_{\tilde{y}}\; f(\tilde{y}) \diff \tilde{\nu}^i(\tilde{y}).$$

Note that by assumption on the second marginal of $\tilde{\theta}$, for any $i \in \{1,\cdots,k\}$,
\begin{equation}\label{eq-supp-thetai}
 \tilde{\theta}^i (\R \times (\R\setminus E)) =0.
\end{equation}

We now define the measure $\theta$ as follows
$$ \theta =  \sum_{i=1}^k {\theta}^i$$
where 
$$ {\theta}^i := (Id, (h|_{s_i}) ^{-1})_{\sharp} \tilde{\theta}^i$$
is well-defined thanks to \eqref{eq-supp-thetai}.

Now, observe that
\begin{eqnarray*}
(Id,h)_{\sharp} \theta   & = &  \sum_{i=1}^k (Id,h)_{\sharp} {\theta}^i  \\
						& = &   \sum_{i=1}^k (Id,h|_{s_i})_{\sharp} {\theta}^i  \\
						& = & \sum_{i=1}^k (Id,h|_{s_i})_{\sharp} ((Id, (h|_{s_i}) ^{-1})_{\sharp} \tilde{\theta}^i)\\
						& = & \sum_{i=1}^k \tilde{\theta}^i\\
						&=&   \tilde{\theta}.
\end{eqnarray*}
\end{proof}

We are now in position to tackle the proof of Theorem \ref{thm-prop-branched-cover}.

\begin{proof}(of Theorem \ref{thm-prop-branched-cover}).
\begin{enumerate}

\item By definition of $\Phi$, it suffices to prove that $\nu^i \ll \H$ iff $h^{i}_{\sharp} \nu^{i}\ll \Leb^1$ for any $i \in \{\de, +,-\}$. Since each $h^i$ is a Lipschitz map, the backward implication is a standard result (recall that $\H$ is the $1$-Hausdorff measure induced by the graph distance). To prove the reverse implication, we make use of Lemma \ref{lem-const-preimage}. Let $I \subset \R \setminus D^i$ be a connected component of $\R \setminus D^i$. Then $(h^i)^{-1} (I) = \sum_{j=1}^k s_j$ where each $s_j$ is an interval within an edge $e_j$, and $e_l$ and $e_j$ are distinct whenever $l\neq j$. We denote by $(h_j^i)^{-1}$ the inverse map of the bijection $h^i: s_j \longrightarrow I$. As explained in the proof of Lemma \ref{lem-const-preimage}, the map $(h_j^i)^{-1}$ is 1-Lipschitz hence measurable. By definition,
$$ ((h^i)_{\sharp} \nu^i)\res_{I}  = \sum_{j=1}^k (h^i)_{\sharp} (\nu^i\res_{s_j}).$$
Moreover, $h^i|_{s_j}$ being a biLipschitz homeomorphism, $(h^i)_{\sharp} (\nu^i\res_{s_j})$ is a.c. with respect to $\Leb^1$ if (and only if) $\nu^i\res_{s_j}$ is a.c. with respect to $\H$ hence the result.

\item Let  $\mu \in \was(e)$, and  $\nu \in \was(G)$. Given $\tilde{\theta} \in \Gamma^o (\Phi(\mu),\Phi(\nu))$, our goal is to construct a plan $\theta \in \Gamma(\mu,\nu)$ such that
$$ \int d^2(x,y) \, \diff \theta \leq \int |\tilde{x}-\tilde{y}|^2 \, d\tilde{\theta}.$$

Pursuing the approach initiated to prove the first item, we first construct a partition of the real line as follows. Let us set 
$$D:= D^+ \cap (l(e),\infty) \sqcup D^- \cap (-\infty, 0) \sqcup D^{\de} \cap [0,l(e)],$$
where the sets $D^i$, $i \in\{+,-,\de\}$ are discrete subsets of $\R$ defined in Lemma \ref{lem-const-preimage}, in particular the set $D$ is discrete as well.

Consequently, we get an at most countable partition of the real line $\{E_b, b \in N\}$ where either $E_b=\{d\}$ for some $d\in D$ or $E_b$ is a connected component of $\R\setminus D$. We denote by $A \subset N$ the subset for which $\{E_b, b\in A\}$ is a partition of $[0,l(e)]$.

The next step consists in decomposing the transport plan $\tilde{\theta}$ by making use of the partition above. 
$$\tilde{\theta} = \sum_{a \in A, b \in N} \tilde{\theta}\res_{E_a \times E_b}.$$

Now for a fixed $(a,b) \in A\times N$, let us set $\Xi_{a,b}:=\tilde{\theta}\res_{E_a \times E_b}$ and assume $\Xi_{a,b} \not\equiv 0$. Note that $E_a \subset [0,l(e)]$ and by assumption $d_{\de_0} : \de \longrightarrow [0,l(e)]$, where $d_{\de_0}(x)= d (\de_0,x)$, is an isometry. Thus we only need to focus on the second marginal of $\Xi_{a,b}$. In the case when $E_b=\{d\}$ with $d \in D_i$ then $(h^i)^{-1}(d)=\{g_1,\cdots,g_s\}$ ($i \in \{+,-,\de\}$) is a finite set. Thus if we set $\tilde{\nu}= h^i_{\sharp} \nu$, we have
$$ \tilde{\nu}(\{d\})= \sum_{i=1}^s  \nu(\{g_i\}).$$
Then we simply consider the transport plan 
$$\theta_{a,b}:= \sum_{i=1}^s \nu(\{g_i\}) \big(\,{p_1}_{\sharp}\Xi_{a,b}\big) \otimes \delta_{g_i}.$$

Since by definition $\tilde{\theta}_{a,b}(\R \times (\R \setminus \{d\}))=0$, we have

\begin{equation}\label{eq-build-planI}
(Id,h^i)_{\sharp} \theta_{a,b} = \tilde{\theta}_{a,b}.
\end{equation} 

It remains to treat the case where $E_b$ is an (open) connected component of $\R\setminus D$, note that by construction $E_b$ is contained in one of these sets: $(-\infty,0), [0,l(e)], (l(e),\infty)$ and thus $\tilde{\nu}\res_{E_b}= ((h^i)_{\sharp} \nu)\res_{E_b}$ for a uniquely defined $i\in \{+,-,\de\}$. Thanks to Lemma \ref{lem-disinte-branc-cov}, there exists a plan $\theta_{a,b}$ such that 
\begin{equation}\label{eq-build-planII}
(h^i)_{\sharp} \theta_{a,b} = \Xi_{a,b}.
\end{equation}

We finally define $\theta$ as
$$\theta := \sum_{a \in A,b \in N} \theta_{a,b}.$$

By definition of $\Phi$ and the partition $\{E_b, b \in N\}$, the properties \eqref{eq-build-planI} and \eqref{eq-build-planII} give us that $\theta \in \Gamma(\mu,\nu)$.

We are left with proving the inequality $ \int d^2(x,y) \, \diff \theta \leq \int |\tilde{x}-\tilde{y}|^2 \, d\tilde{\theta}$ that gives the thesis by definition of the Wasserstein distance. As already observed above, for each $E_b$, $b \in B$, $E_b$ is contained in one of these sets: $(-\infty,0), [0,l(e)], (l(e),\infty)$. Let us treat the case where $E_b \subset (l(e),\infty)$, the others are proved similarly. By what precedes,
\begin{eqnarray*}
\int |\tilde{x} -\tilde{y}|^2 \, d\Xi_{a,b} &=&  \int |h^{\de}(x) -h^{+} (y)|^2 \, \diff \theta_{a,b} (x,y) \\
											&=& \int |d(\de_0,x) -(l(e) +d (\de_1,y))|^2 \, \diff \theta_{a,b} (x,y) \\
											& =& \int|-d(\de_1,x)  -d (\de_1,y)|^2 \, \diff \theta_{a,b} (x,y) \\
											&\geq & \int d^2(x,y) \, \diff \theta_{a,b} (x,y),
\end{eqnarray*} 
where we use the property $d(\de_0,\de_1) =l(e)$ in the second equality.

\item Let us now assume that $\mu=\bar{ \mu}$ and prove $d_W (\Phi(\bar{\mu}), \Phi(\nu)) = d_W(\bar{\mu},\nu)$. The proof is mainly based on the following observation:
\begin{equation}\label{eq-dist-prop}
\left. \text{\parbox{0.8\linewidth}{
\begin{itemize}
\item for $\Pi^{\de}$-a.e geodesic $\gamma \in \geo_{\gamma(0) \in \de}(G)$, 
$$ d(\gamma(0), \gamma(1)) \; = \; |d(\de_0, \gamma(0)) - d(\de_0, \gamma(1))|$$
\item for $\Pi^{+}$-a.e geodesic $\gamma \in \geo_{\gamma(0) \in \de}(G)$,
$$\begin{array}{rcl}
\phantom{vvvvvv} d(\gamma(0), \gamma(1)) & = &  d(\de_1, \gamma(0)) + d(\de_1, \gamma(1)) \\ & = & - d(\de_0, \gamma(0)) + l(e)  + d(\de_1, \gamma(1)), 
\end{array}$$
\item for $\Pi^{-}$-a.e geodesic $\gamma \in \geo_{\gamma(0) \in \de}(G)$,
$$ d(\gamma(0), \gamma(1)) \; = \; d(\de_0, \gamma(0)) + d(\de_0, \gamma(1)).$$
\end{itemize}
}}\right\}\end{equation}

We are then left with estimating $d_W (\Phi(\bar{\mu}), \Phi(\nu))$ from above. To this aim, we  set $\pi^i := (\cdot_0,\cdot_1)_{\sharp} \Pi^i$ for $i\in\{+,-,\de\}$ and compute, using the definition of $\Phi$,
\begin{eqnarray*}
d_W^2 (\Phi(\bar{\mu}), \Phi(\nu)) & \leq & \sum_{i \in \{+,-,\de\}} \int |h^{\de}(x)- h^i(y)|^2 \, d\pi^i(x,y) \\
						& = &  \int |d(\de_0, x) - d(\de_0, x)|^2 \, d\pi^{\de}(x,y) \\
						& \phantom{=} & \hspace{2 cm}+ \int |d(\de_0, x) - (l(e) + d(\de_1,y))|^2 \, d\pi^{+}(x,y) \\
						& \phantom{=} & \hspace{4 cm}  + \int |d(\de_0, x) - (-d(\de_0, y))|^2 \, d\pi^{-}(x,y) \\
						&= &  \sum_{i \in \{+,-,\de\}} \int d^2(x,y) \, d\pi^i(x,y)= d_W^2(\bar{\mu}, \nu), 
\end{eqnarray*} 
where the penultimate equality follows from (\ref{eq-dist-prop}).

\end{enumerate}

\end{proof}

%%%%%%%%%%%%%%%%%%%%%%%%%%%%%%%%%%%%%%%%
\subsection{Proof of the main results}\label{sec-main}

The goal of this part is to prove our main results which we recall below:

\begin{thm}[Quasi regularity of Wasserstein barycenters on metric graphs]
	\label{thm:almost_absolute_continuity}
	Let $(G,d, \mathcal{H})$ be a metric measure graph.
	Consider a measure $\mathbb{P} \in \mathcal{W}_2(\mathcal{W}_2(G))$ that
	gives mass to absolutely continuous measures on $(G,d)$.
	If $\mu_{\mathbb{P}}$ is a barycenter of $\mathbb{P}$, then the restriction of $\mu_\mathbb{P}$
	to the interior of any given edge is absolutely continuous.
	Therefore, if $\mu_\mathbb{P}$ is not absolutely continuous
	then its singular part has to be a sum of Dirac measures located at vertices of $G$.
\end{thm}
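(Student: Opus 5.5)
The proof has two steps, as announced in the introduction. First I treat the case where the barycenter is concentrated on a small piece of a single edge. Then I reduce the general case to this one using the restriction property, Proposition \ref{prop-restri-barycenters}.

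\emph{Step 1 (local case).} Suppose $\bar\mu:=\mu_{\mathbb{P}}$ is a barycenter of $\mathbb{P}$ supported in a minimising edge $\de$; since I may pass to a subsegment, it is enough that it is supported in a subsegment of $\de$ of length less than $c$. I build $\Phi$ from Definition \ref{def-theMapPhi} with this $\bar\mu$ and set $\tilde{\mathbb{P}}:=\Phi_\sharp\mathbb{P}$. Measurability of $\Phi$ comes from the measurable selection $\Pi$ and from the decomposition of dynamical plans. For $\tilde\eta\in\was(\R)$, let $\eta$ be the measure whose image under $\Phi$ is as close to $\tilde\eta$ as possible. Combining items (2) and (3) of Theorem \ref{thm-prop-branched-cover} then compares the two barycenter problems. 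Concretely, for any $\tilde\eta$ one has
$$\int d_W^2(\Phi(\bar\mu),\tilde\nu)\,\diff\tilde{\mathbb{P}}(\tilde\nu)=\int d_W^2(\bar\mu,\nu)\,\diff\mathbb{P}(\nu)\le\int d_W^2(\eta,\nu)\,\diff\mathbb{P}(\nu)\le \int d_W^2(\Phi(\eta),\Phi(\nu))\,\diff\mathbb{P}(\nu),$$
and this holds for every $\eta\in\was(e)$.

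To conclude that $\Phi(\bar\mu)=\bar\mu$ is the barycenter of $\tilde{\mathbb{P}}$, I need the real-line barycenter $\tilde\mu$ of $\tilde{\mathbb{P}}$ to lie in $\was([0,l(e)])$. Then $\tilde\mu=\Phi(\eta)$ for some $\eta\in\was(e)$ by Remark \ref{rmk-Rmk-on-Phi}, and the chain above gives the required minimality; uniqueness (Theorem \ref{lem:Wasserstein_barycenter_real_line}) then yields $\tilde\mu=\bar\mu$. Item (1) of Theorem \ref{thm-prop-branched-cover} gives $\tilde{\mathbb{P}}(\was^{ac}(\R))\ge\mathbb{P}(\was^{ac}(G))>0$. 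Theorem \ref{thm-ac-was-R} then shows $\bar\mu=\tilde\mu\ll\Leb^1$, that is, $\bar\mu\ll\H$.

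\emph{Main obstacle.} The hard point is to show that $\tilde\mu$ is supported in $[0,l(e)]$. My first attempt is to clamp $\tilde\mu$ into $[0,l(e)]$ by the projection $\tilde x\mapsto \min(\max(\tilde x,0),l(e))$ and check that this does not increase the cost. On the real line this is plausible: with $\tilde\nu=\Phi(\nu)$, the optimal plan from $\bar\mu$ sends mass to the outside only monotonically. A cleaner route may avoid identifying $\tilde\mu$ globally. I would prove the statement directly by contradiction: if $\bar\mu\res_{\ide}$ has a singular part, perturb $\bar\mu$ towards the quantile average $\int f^{-1}_{\Phi(\nu)}\,\diff\mathbb{P}$ restricted to the relevant quantile range. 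Since $\bar\mu$ is supported strictly inside the edge, small perturbations of quantiles stay in $[0,l(e)]$, so they remain in the image of $\was(e)$. The chain above then holds locally, and the strict convexity of the Hilbert-space functional gives a strict improvement.

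\emph{Step 2 (general case).} Let $\mu_{\mathbb{P}}$ be arbitrary and suppose its restriction to some $\ide$ has a nonzero singular part. I choose a closed subsegment $J\subset\ide$ of length less than $\min(c,l(e)/2)$ on which that singular part is nonzero. Such a $J$ is minimising by Lemma \ref{lem:edge_local_isometry}: the distance on $J$ equals $|x-y|$, and I treat $J$ as a minimising "edge" by subdividing, which only adds degree-two vertices. Setting $\tilde\mu^1=\mu_{\mathbb{P}}\res_J$ gives a decomposition as in \eqref{eq-decompo-mu}. By Proposition \ref{prop-restri-barycenters}, $\mu^1$ is a barycenter of $\mathbb{Q}^1$, and by Corollary \ref{coro:absolute_continuity_and_restrictions_of_barycenters} we have $\mathbb{Q}^1(\was^{ac}(G))>0$. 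Step 1 then gives $\mu^1\ll\H$, a contradiction.

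Finally, $\H(V)=0$ and the interiors of the edges cover $G\setminus V$. Hence the singular part of $\mu_{\mathbb{P}}$ is concentrated on the countable set $V$, so it is a sum of Dirac masses at vertices.
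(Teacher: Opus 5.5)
\textbf{There is a genuine gap in Step 1.} Your overall structure matches the paper's: the chain of inequalities from items 2 and 3 of Theorem~\ref{thm-prop-branched-cover}, surjectivity of $\Phi$ onto $\was([0,l(e)])$, item 1 together with Theorem~\ref{thm-ac-was-R}, and localisation via Proposition~\ref{prop-restri-barycenters} and Corollary~\ref{coro:absolute_continuity_and_restrictions_of_barycenters}.

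The gap is the step you flag as the main obstacle: that the real-line barycenter $\tilde\mu$ of $\Phi_\sharp\mathbb{P}$ coincides with $\Phi(\bar\mu)$. This is the heart of the proof, and neither of your two routes establishes it.

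\emph{The clamping route.} Since $\tilde\mu$ is the \emph{unique unconstrained} minimiser, clamping it strictly increases the cost unless $\tilde\mu$ already lies in $\was([0,l(e)])$. Clamping alone can never show that $\tilde\mu$ was inside. The remark about monotone transport to the outside is not an argument.

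\emph{The perturbation route.} Set $f:=f^{-1}_{\Phi(\bar\mu)}$ and $g:=\int f^{-1}_{\Phi(\nu)}\,\diff\mathbb{P}(\nu)=f^{-1}_{\tilde\mu}$. You do not know a priori that $g$ takes values in $[0,l(e)]$; that is exactly what is to be proven. Moreover $g$ need not be bounded near $t=0,1$. So $(1-s)f+sg$ may leave $[0,l(e)]$ for every $s>0$. Cutting the direction off to a ``relevant quantile range'' can break monotonicity at the junctions, so the perturbed function need not be a quantile function at all.

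\emph{The missing idea: solve the constrained problem exactly in $L^2([0,1])$.} By Theorem~\ref{thm:optimal_transport_on_real_line},
$\int d_W^2(\mu,\tilde\nu)\,\diff(\Phi_\sharp\mathbb{P})(\tilde\nu)=\|f^{-1}_\mu-g\|_{L^2}^2+I$.
By Lemma~\ref{lem:quantile_function_range_and_measure_support}, $\was([0,l(e)])$ corresponds to the quantile functions with values in $[0,l(e)]$. The unique minimiser over this closed convex set is the pointwise clamp $\min(\max(g,0),l(e))$, which is still nondecreasing and right-continuous. Your chain says $\Phi(\bar\mu)$ minimises over this set, hence $f=\min(\max(g,0),l(e))$.

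Now use that $\bar\mu$ charges neither endpoint of $e$. Then $0<f(t)<l(e)$ for every $t\in(0,1)$, so the clamp is inactive. Hence $g=f$, i.e.\ $\tilde\mu=\Phi(\bar\mu)$. This is Lemma~\ref{lem:barycenter_supported_on_an_edge} together with Corollary~\ref{lem:barycenter_concentrated_in_the_interior_edge}. In your variational language, the fix is to perturb towards the clamp of $g$ rather than towards $g$.

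\textbf{Consequence for the hypothesis in Step 1 and for Step 2.} The hypothesis doing the work is ``no atoms at the endpoints of the edge'', not ``support of length $<c$''. The tripod example with $\bar\mu=\delta_o$ shows Step 1 is false without it. So in Step 2 you should not make the closed segment $J$ itself the new edge. The measure $\mu_{\mathbb{P}}$ may have atoms at the endpoints of $J$, and these become vertex atoms of $\mu^1$, where your Step 1 says nothing. Instead, either restrict to an open interval, as the paper does with $\mu_{\ide}$, or place $J$ strictly inside the subdivided minimising edge.
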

Then we prove the Wasserstein barycenter is not unique (\Cref{prop-non-unique}).

The proof of the above theorem is in two steps. In the first one, we solve the particular case when the barycenter 
is concentrated within an edge of $(G,d)$. To do so, we build upon the branched Wasserstein covering from Definition \ref{def-theMapPhi}. In the second step, we take advantage of the restriction property for Wasserstein barycenters from Section \ref{def-theMapPhi} to reduce the proof 
in the general case to that of the first step.

\subsubsection{When the barycenter is concentrated in an edge}

\begin{lem}
	\label{lem:barycenter_supported_on_an_edge}
	Let $(G,d)$ be a metric graph.
	Fix an oriented edge ${\de}$ of $(G,d)$ and
	a probability measure $\mathbb{P} \in \mathcal{W}_2(\mathcal{W}_2(G))$.
	Suppose that $\mathbb{P}$ has a barycenter $\mu_\mathbb{P} \in \mathcal{W}_2(G)$
	that is supported in the edge ${\de}$ of $(G,d)$.
	Denote by $\Phi: \mathcal{W}_2(G) \rightarrow \mathcal{W}_2(\mathbb{R})$
    the map	introduced in Definition \ref{def-theMapPhi} with $\bar{\mu}=\mu_{\mathbb{P}}$ 
    and define $\mathbb{Q}: = {\Phi}_{\sharp} \mathbb{P}$.

	Then the quantile function of $\Phi(\mu_\mathbb{P})$
	is determined by the quantile function of $\mathbb{Q}$ as follows:\\
	for $t \in [0, 1]$,
\begin{equation*}
	f_{\Phi(\mu_{\mathbb{P}})}^{-1}(t) =
		\begin{cases}
			0 \quad                          & \text{ if } f_{\mu_\mathbb{Q}}^{-1}(t) < 0        \\
			f_{\mu_\mathbb{Q}}^{-1}(t) \quad & \text{ if } 0 \le f_{\mu_\mathbb{Q}}^{-1}(t) \le l({\de}) \\
			l({\de}) \quad                   & \text { if } f_{\mu_\mathbb{Q}}^{-1}(t) > l({\de}).
		\end{cases}
	\end{equation*}
\end{lem}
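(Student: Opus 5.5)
The plan is to transfer the barycenter problem to $\was(\R)$ through $\Phi$, use the explicit quantile formula there, and then solve a constrained projection problem in $L^2([0,1])$. Throughout, $\mu_{\mathbb{Q}}$ denotes the unique barycenter of $\mathbb{Q}=\Phi_\sharp\mathbb{P}$ given by \Cref{lem:Wasserstein_barycenter_real_line}, with $f^{-1}_{\mu_{\mathbb{Q}}}(t)=\int f^{-1}_{\nu}(t)\diff\mathbb{Q}(\nu)$. Before anything else I will check that $\mathbb{Q}\in\was(\was(\R))$. This follows from item 3 of \Cref{thm-prop-branched-cover}: $d_W(\Phi(\mu_\mathbb{P}),\Phi(\nu))=d_W(\mu_\mathbb{P},\nu)$, so second moments are preserved. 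Measurability of $\Phi$ comes from the measurable selection of dynamical plans together with the Borel partition of $\geoE$.

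\textbf{Step 1: the key variational inequality.} For any $\eta\in\was(e)$, items 2 and 3 of \Cref{thm-prop-branched-cover} give
\begin{equation*}
\int d_W(\Phi(\eta),\tilde\nu)^2\diff\mathbb{Q}(\tilde\nu)=\int d_W(\Phi(\eta),\Phi(\nu))^2\diff\mathbb{P}(\nu)\ge \int d_W(\eta,\nu)^2\diff\mathbb{P}(\nu).
\end{equation*}
The right-hand side is at least $\int d_W(\mu_\mathbb{P},\nu)^2\diff\mathbb{P}$, and by item 3 this equals $\int d_W(\Phi(\mu_\mathbb{P}),\tilde\nu)^2\diff\mathbb{Q}$. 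By \Cref{rmk-Rmk-on-Phi}, $\Phi$ restricted to $\was(e)$ is onto $\was([0,l(e)])$, with $\Phi(\eta)=\eta$ under the identification $e\simeq[0,l(e)]$. Hence $\Phi(\mu_\mathbb{P})$ minimises $\tilde\eta\mapsto\int d_W(\tilde\eta,\tilde\nu)^2\diff\mathbb{Q}$ over $\was([0,l(e)])$.

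\textbf{Step 2: rewrite as a projection problem.} By the identity \eqref{equa:rewrite_Wasserstein_barycenter_real_line} in the proof of \Cref{lem:Wasserstein_barycenter_real_line}, this functional equals $\|f^{-1}_{\tilde\eta}-g\|_{L^2}^2+I(\mathbb{Q})$, where $g=f^{-1}_{\mu_\mathbb{Q}}$. Through the isometry of \Cref{thm:optimal_transport_on_real_line}, the set $\was([0,l(e)])$ corresponds to
\begin{equation*}
Q_e:=\{h\in Q;\ 0\le h\le l(e)\ \text{a.e.}\},
\end{equation*}
which is closed and convex. So $f^{-1}_{\Phi(\mu_\mathbb{P})}$ is the $L^2$-projection of $g$ onto $Q_e$, and this projection is unique.

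\textbf{Step 3: identify the projection.} The candidate is the clamped function $c(t)=\min(\max(g(t),0),l(e))$. It is nondecreasing and right continuous because $g$ is and clamping is a continuous nondecreasing map, so $c\in Q_e$. For every $h\in Q_e$ and every $t$, the pointwise inequality $|h(t)-g(t)|\ge|c(t)-g(t)|$ holds, since $c(t)$ is the nearest point of $[0,l(e)]$ to $g(t)$. Hence $c$ minimises, and by uniqueness $f^{-1}_{\Phi(\mu_\mathbb{P})}=c$ a.e.

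Equality for every $t\in(0,1)$, not just a.e., follows because both functions are right continuous. The endpoint values $t=0,1$ follow from the limit definitions, since clamping commutes with monotone limits.

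I expect the main obstacle to be Step 1. It needs both the inequality for general $\eta\in\was(e)$ and the equality at $\bar\mu=\mu_\mathbb{P}$, and it needs the point that the $\Phi$ used is the one fixed with $\bar\mu=\mu_\mathbb{P}$ while being evaluated at other $\eta$. Item 2 of \Cref{thm-prop-branched-cover} is stated for arbitrary $\mu\in\was(e)$ with that fixed $\Phi$, so this works. The remaining verifications are routine.
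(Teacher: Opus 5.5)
Your proposal is correct and follows the paper's proof essentially step for step. It uses the same chain of inequalities from items 2 and 3 of \Cref{thm-prop-branched-cover} together with the surjectivity of $\Phi$ on $\was(e)$, the same rewriting through the quantile isometry as $\|f_{\tilde\eta}^{-1}-f_{\mu_{\mathbb{Q}}}^{-1}\|_{L^2}^2+I(\mathbb{Q})$, and the same use of \Cref{lem:quantile_function_range_and_measure_support} to turn the support constraint into $0\le f^{-1}\le l(e)$, while your Step 3 (clamping preserves monotonicity and right continuity, nearest-point minimality, uniqueness of the minimiser, and the passage from a.e.\ to every $t\in[0,1]$) makes explicit what the paper compresses into the phrase that any minimiser ``must satisfy the requirements''.
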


\begin{proof}
	Since $\mu_\mathbb{P}$ is a barycenter of $\mathbb{P}$ that is supported in the edge ${\de}$,
	according to the second and third items of Theorem \ref{thm-prop-branched-cover} we  have
	\begin{align}\label{equa:reduction_map_push_forward_barycenter}
		\inf_{ \mu \in \mathcal{W}_2({\de}) } \nonumber
		  & \int_{ \mathcal{W}_2(G) } d_{W}(\Phi(\mu), \Phi(\nu))^2 \diff \mathbb{P}(\nu)
		\geq \inf_{ \mu \in \mathcal{W}_2({\de}) }
		\int_{\mathcal{W}_2(G) } d_{W}(\mu, \nu)^2 \diff \mathbb{P}(\nu)                                \\
		\stackrel{\mu_{\mathbb{P}} \mbox{ \small barycenter}}{=} & \int_{ \mathcal{W}_2(G) } d_{W}(\mu_{\mathbb{P}}, \nu)^2 \diff \mathbb{P}(\nu)
		\stackrel{\mbox{ Item 3.}}{=} \int_{ \mathcal{W}_2(G) }
		d_{W}(\Phi(\mu_\mathbb{P}), \Phi(\nu))^2 \diff \mathbb{P}(\nu).
	\end{align}
	
	By assumption $\mu_\mathbb{P} \in 	\mathcal{W}_2({\de}) $ thus  all the terms in (\ref{equa:reduction_map_push_forward_barycenter}) are equal.

	Now, using the surjectivity property explained in \Cref{rmk-Rmk-on-Phi}, and the definition of  $\mathbb{Q}: = {\Phi}_{\#} \mathbb{P}$,
	 we can rephrase (\ref{equa:reduction_map_push_forward_barycenter})  as
	
	\begin{align}\label{equa:barycenter_epsilon_equality_after_push_forward}
		\inf_{ \mu \in \mathcal{W}_2([0, l({\de})]) }
		\int_{\mathcal{W}_2(\mathbb{R}) } d_{W}(\mu, \nu)^2 \diff \mathbb{Q}(\nu) &= \inf_{ \mu \in \mathcal{W}_2({\de}) } 
		  \int_{ \mathcal{W}_2(G) } d_{W}(\Phi(\mu), \Phi(\nu))^2 \diff \mathbb{P}(\nu)   \nonumber \\
		&= \int_{\mathcal{W}_2(\mathbb{R}) } d_{W}(\Phi(\mu_\mathbb{P}), \nu)^2 \diff \mathbb{Q}(\nu).
	\end{align}

		%%%%%%%%%%%%%%%%%%%%%%%%% CUT %%%%%%%%%%%%%%%%%%%%%%%%%%%%%%%%%%%%%%%%%%%%
	
	Denote by $\mu_\mathbb{Q}$ the unique Wasserstein barycenter of $\mathbb{Q}\in \was(\was(\R))$. According to \Cref{lem:Wasserstein_barycenter_real_line}, the quantile function of $\mu_\mathbb{Q}$ satisfies:
	$$f_{\mu_\mathbb{Q}}^{-1} = \int_{ \mathcal{W}_2(\mathbb{R}) } f_\nu^{-1} \diff \mathbb{Q}(\nu).$$
	To further simplify (\ref{equa:barycenter_epsilon_equality_after_push_forward}),
	we build upon the isometric feature of $\nu \longmapsto f_\nu^{-1}$ to get
	\begin{align*}
		\int_{\mathcal{W}_2(\mathbb{R})} d_W(\mu, \nu)^2 \diff \mathbb{Q}(\nu)
		 & = \int_0^1 \int_{\mathcal{W}_2(\mathbb{R})}
		[f_\mu^{-1}(t) - f_\nu^{-1}(t)]^2 \diff \mathbb{Q}(\nu) \diff t \\
		 & =  \int_0^1 \left[
			f_\mu^{-1}(t) - \int_{\mathcal{W}_2(\mathbb{R})} f_\nu^{-1}(t) \diff \mathbb{Q}(\nu)
			\right]^2 \diff t + I(\mathbb{Q}) \\
		 & = 
			\int_0^1 [f_{\mu}^{-1} - f_{\mu_\mathbb{Q}}^{-1}]^2 \diff \lambda + I(\mathbb{Q}),
	\end{align*}
	where the abbreviated term $I(\mathbb{Q})$ is independent of $\mu$:
	\[
		I(\mathbb{Q}):  = \int_0^1 \int_{\mathcal{W}_2(\mathbb{R})}
		[ f_\nu^{-1}(t) ]^2 \diff \mathbb{Q}(\nu) -
		\left( \int_{\mathcal{W}_2(\mathbb{R})}  f_\nu^{-1}(t) \diff \mathbb{Q}(\nu) \right)^2
		\diff t.
	\]
	Hence, (\ref{equa:barycenter_epsilon_equality_after_push_forward}) is equivalent to
	\begin{equation}
		\label{equa:barycenter_into_min_over_quantile_functions}
		\inf_{ \mu \in \mathcal{W}_2([0, l({\de})]) }
		\int_0^1 [f_{\mu}^{-1}(t) - f_{\mu_\mathbb{Q}}^{-1}(t)]^2 \diff t =
		\int_0^1 [f_{\Phi(\mu_\mathbb{P})}^{-1}(t) - f_{\mu_\mathbb{Q}}^{-1}(t)]^2 \diff t.
	\end{equation}
	By \Cref{lem:quantile_function_range_and_measure_support}, a probability measure $\mu \in \was(\R)$ is in the space
	$\mathcal{W}_2([0, l({\de})])$ if and only
	the image of $f_{\mu}^{-1}$ is contained in the interval $[0, l({\de})]$.
	Hence, a solution $\mu \in \mathcal{W}_2([0, l({\de})])$ of the minimization problem in the left-hand side
	of (\ref{equa:barycenter_into_min_over_quantile_functions}) must satisfy the requirements
	\begin{equation*}
		f_\mu^{-1}(t) = 0 \text { if } f_{\mu_\mathbb{Q}}^{-1}(t) < 0,\quad
		f_\mu^{-1}(t) = f_{\mu_\mathbb{Q}}^{-1}(t) \text{ if }
		0 \le f_{\mu_\mathbb{Q}}^{-1}(t) \le l({\de}),\quad
		f_\mu^{-1}(t) = l({\de}) \text { if } f_{\mu_\mathbb{Q}}^{-1}(t) > l({\de}).
	\end{equation*}
	In particular, the measure $\Phi(\mu_\mathbb{P}) \in \mathcal{W}_2([0, l({\de})])$ satisfies the above requirements,
	which concludes the proof.
\end{proof}

By using \Cref{lem:barycenter_supported_on_an_edge}, we can determine the barycenter 
of $\mathbb{Q}=  \Phi_{\sharp} \mathbb{P}$ provided $\mu_{\mathbb{P}}$ is concentrated in the interior of $e$.

\begin{coro}
	\label{lem:barycenter_concentrated_in_the_interior_edge}
	Let $(G,d)$ be a metric graph.
	Fix an oriented edge ${\de}$ of $(G,d)$ and
	a probability measure $\mathbb{P} \in \mathcal{W}_2(\mathcal{W}_2(G))$.
	Suppose that $\mathbb{P}$ has a barycenter $\mu_\mathbb{P} \in \mathcal{W}_2(G)$
	that is supported in the edge ${\de}$ of $(G,d)$
	and assigns no mass to the ends of ${\de}$.
	
	Then $\Phi(\mu_\mathbb{P})$ is the unique barycenter
	of $\mathbb{Q}: = {\Phi}_{\#} \mathbb{P}$.
\end{coro}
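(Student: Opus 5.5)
The plan is to combine \Cref{lem:barycenter_supported_on_an_edge} with the absence of atoms of $\mu_\mathbb{P}$ at the endpoints of $\de$. The idea is to show that the clamping in the lemma's formula never actually occurs on a set of positive measure. Then the quantile functions of $\Phi(\mu_\mathbb{P})$ and $\mu_\mathbb{Q}$ agree almost everywhere, and \Cref{thm:optimal_transport_on_real_line} and \Cref{lem:Wasserstein_barycenter_real_line} finish the argument.

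First, I identify $\Phi(\mu_\mathbb{P})$. Since $\mu_\mathbb{P}$ is concentrated in $\de$, every geodesic in $\Pi_{\mu_\mathbb{P}}^{\mu_\mathbb{P}}$ is constant, so $\Pi^{+}=\Pi^{-}=0$. By \Cref{rmk-Rmk-on-Phi}, $\Phi(\mu_\mathbb{P})=h^{\de}_\sharp\mu_\mathbb{P}$, which is just $\mu_\mathbb{P}$ viewed on $[0,l(e)]$. In particular, the hypothesis gives $\Phi(\mu_\mathbb{P})(\{0\})=\mu_\mathbb{P}(\{\de_0\})=0$ and $\Phi(\mu_\mathbb{P})(\{l(e)\})=0$.

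Next, let $g:=f_{\mu_\mathbb{Q}}^{-1}$. Since $g$ is nondecreasing, the set $\{t\in(0,1); g(t)<0\}$ is an interval of the form $(0,s)$ or $(0,s]$. By \Cref{lem:barycenter_supported_on_an_edge}, $f_{\Phi(\mu_\mathbb{P})}^{-1}\equiv 0$ on this interval. If $s>0$, a quantile function constant equal to $0$ on an interval of length $s$ forces an atom: from \Cref{defn:quantile_function}, $f_{\Phi(\mu_\mathbb{P})}(x)\le t$ for $x<0$ and $t<s$, while $f_{\Phi(\mu_\mathbb{P})}(0)\ge s$. Hence $\Phi(\mu_\mathbb{P})(\{0\})\ge s>0$, which is a contradiction. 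Thus $s=0$. Symmetrically, $\{t; g(t)>l(e)\}$ is an interval $[r,1)$ or $(r,1)$, on which $f_{\Phi(\mu_\mathbb{P})}^{-1}\equiv l(e)$. This gives mass at least $1-r$ at $l(e)$, so $r=1$.

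Consequently $f_{\Phi(\mu_\mathbb{P})}^{-1}=f_{\mu_\mathbb{Q}}^{-1}$ on $(0,1)$, or at least almost everywhere. By the injectivity of the isometry in \Cref{thm:optimal_transport_on_real_line}, $\Phi(\mu_\mathbb{P})=\mu_\mathbb{Q}$. Finally, \Cref{lem:Wasserstein_barycenter_real_line} applies because $\mathbb{Q}=\Phi_\sharp\mathbb{P}\in\was(\was(\R))$; this follows from item 3 of \Cref{thm-prop-branched-cover}, since $d_W(\Phi(\mu_\mathbb{P}),\Phi(\nu))=d_W(\mu_\mathbb{P},\nu)$ is square-integrable. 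That theorem says $\mu_\mathbb{Q}$ is the unique barycenter, which gives the claim.

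The main obstacle is the atom-from-flat-quantile step. It needs a careful reading of the right-continuous quantile convention, in particular the case where the clamped interval is only half-open or the endpoint value is attained. I would handle it directly through the distribution function as sketched above.
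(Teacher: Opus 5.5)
Your proposal is correct and follows essentially the paper's route: the absence of atoms of $\Phi(\mu_\mathbb{P})$ at $0$ and $l(e)$ rules out the clamping in \Cref{lem:barycenter_supported_on_an_edge}, so the quantile functions of $\Phi(\mu_\mathbb{P})$ and $\mu_\mathbb{Q}$ coincide, and \Cref{lem:Wasserstein_barycenter_real_line} gives uniqueness. The only difference is cosmetic: you extract the atom bound directly from the quantile definition, whereas the paper uses the identity $f(x)=\inf\{t;\, f^{-1}(t)>x\}$ from Revuz--Yor to conclude $f_{\Phi(\mu_\mathbb{P})}^{-1}(t)\notin\{0,l(e)\}$ for $t\in(0,1)$. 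Your explicit checks that $\Phi(\mu_\mathbb{P})=h^{e}_{\sharp}\mu_\mathbb{P}$ and that $\mathbb{Q}\in\was(\was(\R))$ are details the paper leaves implicit.
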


%%%%%%%%%%%%%%%%%%%%% CUT 2 %%%%%%%%%%%%%%%%%%%%%%%%%%%%%%%%%%%%%%%

\begin{proof}

	Since $\mu_\mathbb{P}$ is assumed to give no mass to the ends of ${\de}$,
	$\Phi(\mu_\mathbb{P})$ gives no mass to the endpoints of $[0, l({\de})]$.
	Hence, \cite[Chapter 0, Lemma (4.8)]{revuz2013continuous} implies
	\begin{eqnarray*}
		0 &=& f_{\Phi(\mu_\mathbb{P})} (0) = \inf \{ t\in (0, 1) \mid f_{\Phi(\mu_\mathbb{P})}^{-1}(t) > 0 \},
		\\
		\vspace{0.4cm}
		1 &=& \lim_{s \uparrow l({\de})} f_{\Phi(\mu_\mathbb{P})} (s) =
		\lim_{s \uparrow l({\de})} \inf \{ t \in (0, 1) \mid f_{\Phi(\mu_\mathbb{P})}^{-1}(t) > s \}.
	\end{eqnarray*}
	It follows from these two equalities that for $0 < t < 1$, $f_{\Phi(\mu_\mathbb{P})}^{-1}(t) \neq 0$
	and $f_{\Phi(\mu_\mathbb{P})}^{-1}(t) \neq l({\de})$.
	According to the requirements satisfied
	by $\Phi(\mu_\mathbb{P})$ in \Cref{lem:barycenter_supported_on_an_edge},
	we must have $0 \le f_{\mu_\mathbb{Q}}^{-1}(t) \le l({\de})$ and
	$f_{\Phi(\mu_{\mathbb{P}})}^{-1}(t) = f_{\mu_\mathbb{Q}}^{-1}(t)$ for $t \in (0, 1)$.
	Hence, $\Phi(\mu_\mathbb{P}) = \mu_\mathbb{Q}$ is the unique barycenter of $\mathbb{Q}$.
\end{proof}

The proof of our main result in the case $\mu_{\mathbb{P}}(\ide)=1$ is a straightforward consequence of the corollary above as explained below. We then use the restriction property of Wassertein barycenters to prove the general case:

%%%%%%%%%%%%%%%%%%%%%%%%%%%%%%%%%%%%%%%%%%%%%%%%%%%%%%%%%
\subsubsection{Proof of Theorem \ref{thm:almost_absolute_continuity}}

%%%%%%%%%%%%%%%%%%%%%%%%%%%%%%%%%%%%%%%%%%%%%%%%%%%%%%%

%\begin{proof}
	Fix an oriented edge ${\de}$ of $(G,d)$ and denote by $\ide$ the interior of ${\de}$. We are going to prove that the restriction of $\mu_\mathbb{P}$ to $\ide$ is absolutely continuous with respect to $\mathcal{H}$. To do so, it suffices to prove that this property holds for any sufficiently small open subintervals of $\ide$. Therefore, let us now fix $x\in \ide$ and $I=]a,b[ \ni x$ an open subinterval of $\ide$ such that the graph distance between $a$ and $b$ satisfies $ d(a,b)=|b-a|$; such a $I$ always exists. If $I \neq \ide$ (for instance if $e$ is \emph{not} a minimising edge of $(G,d)$) then turn $G$ into a new graph $G_1$ by adding $a$ and $b$ to the set of vertices and dividing $e$ into three edges in the straightforward way. This new metric graph $G_1$ satisfies the same assumptions as $(G,d)$ (up to reducing the positive lower bound on the edge length). Therefore, without loss of generality, we can assume that $\de$ is a minimising edge. Let us now discuss all the alternatives.

	If $\mu_{\mathbb{P}}$ gives no mass to the set $\ide$, then its restriction on $\ide$
	is null, and thus absolutely continuous. Consider now the case where $\mu_\mathbb{P}(\ide) > 0$ and denote by $\mu_{\ide} \in \mathcal{W}_2(G)$ the normalized probability measure of the restriction of $\mu_\mathbb{P}$ to $\ide$. We still denote by $\Phi$ the branched Wasserstein covering introduced in \Cref{def-theMapPhi}, recall that the properties of $\Phi$ rely on the fact that $\de$ is minimising.

	According to Item 3. of \Cref{thm-prop-branched-cover}, for $\nu \in \was(G)$, $\Phi(\nu)$ is a.c. with respect to the Lebesgue measure iff $\nu \ll \mathcal{H}$. We prove the absolute continuity of $\mu_{\ide}$ by discussing two different cases.

	If $\mu_{\ide} = \mu_\mathbb{P}$,
	then, according to \Cref{lem:barycenter_concentrated_in_the_interior_edge}, $\Phi(\mu_{\ide})$ is the unique barycenter of $\mathbb{Q}: = \Phi_{\#} \mathbb{P}$
	which is absolutely continuous since
	$\mathbb{Q}$ gives mass to absolutely continuous measures on $\mathbb{R}$
	by \Cref{thm-ac-was-R}.
	It follows that $\mu_{\mathbb{P}}$ is absolutely continuous when $\mu_{\ide} = \mu_\mathbb{P}$.
	We now prove that $\mu_{\ide}$ is absolutely continuous when $\mu_{\ide} \neq \mu_\mathbb{P}$.
	For the decomposition $\mu_\mathbb{P} = \lambda \, \mu_{\ide}+ ( 1 - \lambda) \mu^2$
	with $\lambda : = \mu_\mathbb{P}(\ide) \in (0, 1)$
	and $\mu^2 \in \mathcal{W}_2(G)$,
	\Cref{coro:absolute_continuity_and_restrictions_of_barycenters} provides us with
	two measures $\mathbb{P}_1,\mathbb{P}_2 \in \mathcal{W}_2(\mathcal{W}_2(G))$
	such that $\mu_{\ide}$ is a barycenter of $\mathbb{P}_1$
	and $\mu^2$ is a barycenter of $\mathbb{P}_2$.
	Moreover, \Cref{coro:absolute_continuity_and_restrictions_of_barycenters} implies that
	both $\mathbb{P}_1$ and $\mathbb{P}_2$ give mass to absolutely continuous measures
	since $\mathbb{P}$ does so.
	We then have $\mu_{\ide} = \mu_{\mathbb{P}_1}$ is absolutely continuous as proven in the previous case.

	Finally, since our argument holds for arbitrarily chosen ${\de}$, our theorem is proven.
%\end{proof}

%%%%%%%%%%%%%%%%%%%%%%%%%%%%%%%%%%%%%%%%%%%%%%%%%%%

\subsubsection{Proof of \Cref{prop-non-unique}}
We first consider a tripod $T$ made of segments of length 1 glued at the point $v_0$ as in \Cref{fig-trip}.
\begin{figure}[h]
	\centering
	\includegraphics[scale =0.6]{non_unique_barycenter}
	\caption{multiple a.c. barycenters}\label{fig-trip}
\end{figure}
Consider the oriented edge ${\de} = \vv{\{ v_1, v_0 \}}$. Define $\nu_1= \mathcal{H}{\res}_{\de}$ and $\nu_2= \frac{1}{2} \delta_{v_2} + \frac{1}{2}\delta_{v_3}$ 
	and let $\Phi : \mathcal{W}_2(T) \rightarrow \mathcal{W}_2(\mathbb{R})$
	be the map associated to $\de$ and $\bar{\mu}= \nu_1$ as in \Cref{def-theMapPhi}. We set $\mathbb{P}= \frac{1}{2} \delta_{\nu_1} + \frac{1}{2} \delta_{\nu_2}.$
	
	Now, consider $\mu_1 : = \Phi(\nu_1) = \Leb^1 |_{[0, 1]}$,
	$\mu_2 : = \Phi(\nu_2) = \delta_2$, and let us introduce 
	
	$$\mu_\mathbb{P} := \frac{1}{2} \eta_2 + \frac{1}{2} \eta_3$$
	with $\eta_2$ (respectively $\eta_3$) being a probability measure
	supported in the edge $e_2$ (respectively $e_3$)
	such that
	\[
		\Phi(\mu_\mathbb{P})
		= \frac{1}{2} \Phi(\eta_2) + \frac{1}{2}\Phi(\eta_3)
		= 2 \Leb^1 |_{[1, \frac{3}{2}]}
	%	= \mu_\mathbb{Q}.
	\]
	
	Our goal is to \emph{prove} that any such $\mu_{\mathbb{P}}$ is a barycenter of $\mathbb{P}$.
	
	\smallskip
	
	We finally define $\mathbb{Q}: = \Phi_{\#}\mathbb{P} = 
	\frac{1}{2} \delta_{\mu_1} + \frac{1}{2} \delta_{\mu_2}$. Recall that when a generic $\mathbb{P}$ gives mass only to two distinct measures with equal weight, any barycenter of $\mathbb{P}$ is a midpoint (relative to $d_W$) of the involved measures and vice versa. Therefore,
	$\mu_\mathbb{Q}: = 2 \Leb^1 |_{[1, \frac{3}{2}]}$ is the barycenter of $\mathbb{Q}$ by its very definition.
	
	Thanks to \Cref{thm-prop-branched-cover}, Item 3., we have
	\begin{equation}\label{eq-toto}
	\begin{array}{rcl}
	 d_W(\nu_1, \mu_{\mathbb{P}})  & = & d_W(\Phi(\nu_1), \Phi(\mu_{\mathbb{P}})) = d_W(\mu_1, \mu_{\mathbb{Q}}) \\   
	 d_W(\nu_1, \nu_2) & = &  d_W(\Phi(\nu_1), \Phi(\nu_2)) = d_W(\mu_1, \mu_{2}). 
	\end{array}
	\end{equation}
	
	The proof is complete if we check that
	$$ d_W(\nu_1, \mu_{\mathbb{P}}) + d_W(\mu_{\mathbb{P}}, \nu_2) = d_W(\nu_1,\nu_2).$$
	According to \eqref{eq-toto}, and since $\mu_{\mathbb{Q}}$ is a midpoint of $\mu_1$ and $\mu_2$, we are done if we prove \\ $d_W(\mu_2, \mu_{\mathbb{Q}})= d_W(\nu_2, \mu_{\mathbb{P}}).$

	To this aim, we use the fact that $e_2 \cup e_3$ is isometric to a segment $[0,2]$ (where $v_2$ is identified with $0$ and $v_3$ with $2$), and the image of $\eta_2$ (resp. $\eta_3$) in $[0,2]$ is a probability measure supported in $[0,1]$ (resp. $[1,2]$).
	Therefore the following  transport plan $\gamma$ between $\mu_{\mathbb{P}}$ and $\nu_2$ is optimal:
	
	\[
		\gamma := \frac{1}{2} \eta_2 \otimes \delta_{v_2} + \frac{1}{2} \eta_3 \otimes \delta_{v_3}.
	\]
	We can now estimate the distance between $\mu_{\mathbb{P}}$ and $\nu_2$:
	\begin{align*}
		d_{W} (\mu_\mathbb{P}, \nu_2)^2
			&= \int_\Gamma d(x, y)^2 \diff \gamma(x, y)\\
		 & = \frac{1}{2} \int_{e_2} d(x, v_2)^2 \diff \eta_2(x)
		+ \frac{1}{2} \int_{e_3} d (x, v_3)^2 \diff \eta_3(x)    \\         &
		          = \frac{1}{2} d_{W}(\Phi(\eta_2), \mu_2)^2
		+ \frac{1}{2} d_{W}(\Phi(\eta_3), \mu_2)^2         \\          &
		           = d_{W}(\Phi(\mu_\mathbb{P}), \mu_2)^2.             \\          &
	\end{align*}
	From our construction of $\mu_\mathbb{P}$,
	there are infinitely many possible choices and all of them are absolutely continuous. We have thus proven that the absolutely continuous part of the Wasserstein barycenter as in \Cref{thm:almost_absolute_continuity} is not unique. 
	
	Let us now prove the non-uniqueness of the atomic part. Let us consider two copies of the tripod described in the introduction and drawn in \Cref{fig:tripod_example}. We obtain a new metric graph $(G,d)$ by gluing each segment $[1/2,1]$ of one copy with the corresponding segment of the other one as in \Cref{fig:glued_tripod}. As we did in the introduction, let $\mathbb{P} := \frac{1}{3}\sum_{i=1}^3 \delta_{\nu_i}$ be a probability measure on the Wasserstein space over the glued tripod, where each $\nu_i$ is an absolutely continuous measure supported on the outer half $[\frac{1}{2}, 1]$ of a distinct branch, for instance the uniform probability measure on this interval. As explained in the introduction in the case of a tripod, the Dirac mass at the common point $0$ is the barycenter of the corresponding $\mathbb{P}$. For each copy of the tripod isometrically embeds into the glued tripod $(G,d)$, we infer that any convex combination of Dirac masses at $0^+$ and $0^-$ is a Wasserstein barycenter of $\mathbb{P}$ and the non-uniqueness is proven.
	
	Note that by isometrically embedding the two previous examples into a single metric graph $(G',d)$ where the images are far from each other, and then by considering a probability measure $\mathbb{P}'$ which is a balanced convex combination of (the images of) the previous $\mathbb{P}$'s, one gets a metric graph where the Wasserstein barycenter has both an atomic and an absolutely continuous part, each of which is non-unique.

\begin{figure}[h]
	\centering
	\includegraphics[ scale=0.6]{tripod_glue}
	\caption{\label{fig:glued_tripod}$\mathbb{P} = \sum_{i=1}^3 \frac{1}{3}\,\delta_{\nu_i}$ on the glued tripod}
\end{figure}

\medskip

\bibliography{bibliography.bib}

\begin{thebibliography}{10}

\bibitem{agueh2011barycenters}
{\sc M.~Agueh and G.~Carlier}, {\em Barycenters in the {W}asserstein space},
  SIAM Journal on Mathematical Analysis, 43 (2011), pp.~904--924.

\bibitem{ahidar2020}
{\sc A.~Ahidar-Coutrix, T.~Le~Gouic, and Q.~Paris}, {\em Convergence rates for
  empirical barycenters in metric spaces: curvature, convexity and extendable
  geodesics}, Probab. Theory Relat. Fields, 177 (2020), pp.~323--368.

\bibitem{ambrosio2021lectures}
{\sc L.~Ambrosio, E.~Bru{\'e}, and D.~Semola}, {\em Lectures on Optimal
  Transport}, vol.~130 of UNITEXT, 2021.

\bibitem{ambrosio2008gradient}
{\sc L.~Ambrosio, N.~Gigli, and G.~Savar{\'e}}, {\em Gradient flows: in metric
  spaces and in the space of probability measures}, Lectures in Mathematics.
  ETH Zürich, Birkhäuser, Basel, 2005.

\bibitem{ambrosio2014calculus}
\leavevmode\vrule height 2pt depth -1.6pt width 23pt, {\em Calculus and heat
  flow in metric measure spaces and applications to spaces with {R}icci bounds
  from below}, Inventiones mathematicae, 195 (2014), pp.~289--391.

\bibitem{benamoub2000}
{\sc J.-D. Benamou and Y.~Brenier}, {\em A computational fluid mechanics
  solution to the {Monge}-{Kantorovich} mass transfer problem}, Numer. Math.,
  84 (2000), pp.~375--393.

\bibitem{bertrand2008existence}
{\sc J.~Bertrand}, {\em Existence and uniqueness of optimal maps on
  {A}lexandrov spaces}, Advances in Mathematics, 219 (2008), pp.~838--851.

\bibitem{bertrand2012geometric}
{\sc J.~Bertrand and B.~R. Kloeckner}, {\em A geometric study of {W}asserstein
  spaces: {H}adamard spaces}, Journal of Topology and Analysis, 4 (2012),
  pp.~515--542.

\bibitem{bigot2018characterization}
{\sc J.~Bigot and T.~Klein}, {\em Characterization of barycenters in the
  {W}asserstein space by averaging optimal transport maps}, ESAIM: Probability
  and Statistics, 22 (2018), pp.~35--57.

\bibitem{bobkovledoux2019}
{\sc S.~Bobkov and M.~Ledoux}, {\em One-dimensional empirical measures, order
  statistics, and {Kantorovich} transport distances}, vol.~1259 of Mem. Am.
  Math. Soc., Providence, RI: American Mathematical Society (AMS), 2019.

\bibitem{bogachev2007measure}
{\sc V.~I. Bogachev}, {\em Measure theory}, Springer, Berlin, 2007.

\bibitem{burago2001course}
{\sc D.~Burago, I.~D. Burago, Y.~Burago, S.~Ivanov, S.~V. Ivanov, and S.~A.
  Ivanov}, {\em A course in metric geometry}, vol.~33, American Mathematical
  Soc., 2001.

\bibitem{cinlar2011probability}
{\sc E.~{\c{C}}{\i}nlar}, {\em Probability and Stochastics}, Graduate Texts in
  Mathematics, Springer New York, 2011.

\bibitem{erbar2022gradient}
{\sc M.~Erbar, D.~L. Forkert, J.~Maas, and D.~Mugnolo}, {\em Gradient flow
  formulation of diffusion equations in the {W}asserstein space over a metric
  graph}, Networks and Heterogeneous Media, 17 (2022).

\bibitem{kuwada2015}
{\sc M.~Erbar, K.~Kuwada, and K.-T. Sturm}, {\em On the equivalence of the
  entropic curvature-dimension condition and {Bochner}'s inequality on metric
  measure spaces}, Invent. Math., 201 (2015), pp.~993--1071.

\bibitem{evans2018measure}
{\sc L.~C. Evans and R.~F. Garzepy}, {\em Measure theory and fine properties of
  functions}, Routledge, Oxfordshire, 2018.

\bibitem{gigli2016optimal}
{\sc N.~Gigli, T.~Rajala, and K.-T. Sturm}, {\em Optimal maps and
  exponentiation on finite-dimensional spaces with {R}icci curvature bounded
  from below}, The Journal of geometric analysis, 26 (2016), pp.~2914--2929.

\bibitem{han2024geometry}
{\sc B.-X. Han, D.~Liu, and Z.~Zhu}, {\em On the geometry of {W}asserstein
  barycenter {I}}, arXiv preprint arXiv:2412.01190,  (2024).

\bibitem{hotz2013sticky}
{\sc T.~Hotz, S.~Skwerer, S.~Huckemann, H.~Le, J.~Marron, J.~C. Mattingly,
  E.~Miller, J.~Nolen, M.~Owen, and V.~Patrangenaru}, {\em Sticky central limit
  theorems on open books},  (2013).

\bibitem{jiang2017absolute}
{\sc Y.~Jiang}, {\em Absolute continuity of {W}asserstein barycenters over
  {A}lesxandrov spaces}, Canadian Journal of Mathematics, 69 (2017),
  pp.~1087--1108.

\bibitem{kim2017wasserstein}
{\sc Y.-H. Kim and B.~Pass}, {\em Wasserstein barycenters over {R}iemannian
  manifolds}, Advances in Mathematics, 307 (2017), pp.~640--683.

\bibitem{le2017existence}
{\sc T.~Le~Gouic and J.-M. Loubes}, {\em Existence and consistency of
  {W}asserstein barycenters}, Probability Theory and Related Fields, 168
  (2017), pp.~901--917.

\bibitem{lisini2007}
{\sc S.~Lisini}, {\em Characterization of absolutely continuous curves in
  {Wasserstein} spaces}, Calc. Var. Partial Differ. Equ., 28 (2007),
  pp.~85--120.

\bibitem{ma-phd}
{\sc J.~Ma}, {\em Régularité des barycentres de Wasserstein}, phd,
  université de Toulouse, 2025.

\bibitem{ma2023absolute}
\leavevmode\vrule height 2pt depth -1.6pt width 23pt, {\em Absolute continuity
  of {Wasserstein} barycenters on manifolds with a lower {Ricci} curvature
  bound}, Calc. Var. Partial Differ. Equ., 65 (2026), p.~36.
\newblock Id/No 9.

\bibitem{pereira2025}
{\sc L.~Manella~Pereira and M.~Hadi~Amini}, {\em A survey on optimal transport
  for machine learning: Theory and applications}, IEEE Access, 13 (2025),
  pp.~26506--26526.

\bibitem{mazon2015optimal}
{\sc J.~M. Maz{\'o}n, J.~D. Rossi, and J.~Toledo}, {\em Optimal mass transport
  on metric graphs}, SIAM Journal on Optimization, 25 (2015), pp.~1609--1632.

\bibitem{mccann2001polar}
{\sc R.~J. McCann}, {\em Polar factorization of maps on {R}iemannian
  manifolds}, Geometric \& Functional Analysis GAFA, 11 (2001), pp.~589--608.

\bibitem{ohta2012barycenters}
{\sc S.-I. Ohta}, {\em Barycenters in {A}lesxandrov spaces of curvature bounded
  below}, Advances in geometry, 12 (2012), pp.~571--587.

\bibitem{otto2001}
{\sc F.~Otto}, {\em The geometry of dissipative evolution equations: {The}
  porous medium equation}, Commun. Partial Differ. Equations, 26 (2001),
  pp.~101--174.

\bibitem{panaretos2020invitation}
{\sc V.~M. Panaretos and Y.~Zemel}, {\em An invitation to statistics in
  {W}asserstein space}, SpringerBriefs in Probability and Mathematical
  Statistics, Springer, Cham, Switzerland, 2020.

\bibitem{paris2020}
{\sc Q.~Paris}, {\em Jensen's inequality in geodesic spaces with lower bounded
  curvature}, Preprintv2,  (2021).

\bibitem{revuz2013continuous}
{\sc D.~Revuz and M.~Yor}, {\em Continuous martingales and Brownian motion},
  vol.~293, Springer Science \& Business Media, 2013.

\bibitem{Santambrogio2015}
{\sc F.~Santambrogio}, {\em Optimal Transport for Applied Mathematicians:
  Calculus of Variations, PDEs, and Modeling}, vol.~87 of Progress in Nonlinear
  Differential Equations and Their Applications, 2015.

\bibitem{sturm2003probability}
{\sc K.-T. Sturm}, {\em Probability measures on metric spaces of nonpositive
  curvature}, SFB 611, 2003.

\bibitem{sturm2005convex}
\leavevmode\vrule height 2pt depth -1.6pt width 23pt, {\em Convex functionals
  of probability measures and nonlinear diffusions on manifolds}, Journal de
  Mathématiques Pures et Appliquées, 84 (2005), pp.~149--168.

\bibitem{villani2021topics}
{\sc C.~Villani}, {\em Topics in optimal transportation}, vol.~58 of Graduate
  Studies in Mathematics, American Mathematical Society, Providence, Rhode
  Island, 2003.

\bibitem{villani2009optimal}
\leavevmode\vrule height 2pt depth -1.6pt width 23pt, {\em Optimal transport:
  old and new}, vol.~338 of Grundlehren der mathematischen Wissenschaften,
  2009.

\end{thebibliography}
\addcontentsline{toc}{section}{Bibliography}

\end{document}